\documentclass[english]{amsart}

\usepackage{enumerate} 
\usepackage{amssymb}
\usepackage{array}
\usepackage{mathdots}
\usepackage{tkz-fct} 
\usepackage{mathtools}
\usepackage[curve,all]{xy}
\usepackage{tikz-cd}
\usepackage{tabu}
\usepackage{multirow}
\xyoption{matrix}
\xyoption{curve}
\usepackage{bm}
\xyoption{color}
\xyoption{line}
\xyoption{arc}

\usepackage[shortlabels]{enumitem}
\setlist[enumerate]{label=\rm{(\arabic*)}}
\setlist[enumerate,2]{label=\rm({\it\roman*})}
\setlist[itemize]{label=\raisebox{0.25ex}{\tiny$\bullet$}}
\usepackage[backref=false, colorlinks, linktocpage, citecolor = blue, linkcolor = blue]{hyperref}

\newcommand{\C}{\mathbb{C}}
\newcommand{\Z}{\mathbb{Z}}
\newcommand{\R}{\mathbb{R}}
\newcommand{\F}{\mathbb{F}}
\newcommand{\p}{\mathbb{P}}
\newcommand{\w}{\omega}
\newcommand{\V}{\mathcal{V}}
\newcommand{\RR}{\mathcal{R}}
\newcommand{\D}{\mathcal{D}}
\newcommand{\W}{\mathcal{W}}
\newcommand{\Spec}{\mathrm{Spec}}
\renewcommand{\k}{\mathrm{k}}
\renewcommand{\O}{\mathcal{O}}

\DeclareMathOperator{\SL}{SL}
\DeclareMathOperator{\SU}{SU}
\DeclareMathOperator{\Spin}{Spin}
\DeclareMathOperator{\SO}{SO}
\DeclareMathOperator{\PGL}{PGL}
\DeclareMathOperator{\Gal}{Gal}

\DeclareMathOperator{\et}{ and }
\DeclareMathOperator{\Aut}{Aut}

\setcounter{tocdepth}{1}
\usepackage[toc,page]{appendix}
\theoremstyle{definition}

\theoremstyle{definition}
\theoremstyle{plain}
\newtheorem{theorem}{Theorem}[section]
\newtheorem*{theoremaux}{Main Theorem}
\newtheorem{proposition}[theorem]{Proposition}
\newtheorem{lemma}[theorem]{Lemma}
\newtheorem{corollary}[theorem]{Corollary}
\theoremstyle{definition}
\newtheorem{definition}[theorem]{Definition}
\newtheorem*{definition*}{Definition}

\newtheorem{example}[theorem]{Example}
\theoremstyle{remark}
\newtheorem{remark}[theorem]{Remark}
\newtheorem*{remark*}{Remark}

\title{Real forms of minimal $\SL_2$-threefolds}

\author[Lucas Moulin]{Lucas Moulin}
\thanks{The IMB receives support from  the EIPHI Graduate School (contract ANR-17-EURE-0002).}
\address{Institut de Math\'{e}matiques de Bourgogne, UMR 5584 CNRS, Universit\'{e} de Bour-gogne, F-21000 Dijon, France}
\email{lucas.moulin@u-bourgogne.fr}

\begin{document} 

\maketitle
\begin{abstract}
We complete the classification of the real forms of almost homogeneous $\SL_2$-threefolds. More precisely, we use the Luna-Vust theory to determine the real forms of minimal smooth complete $\SL_2$-varieties containing an orbit isomorphic to $\SL_2/H$, where $H$ is a finite cyclic subgroup of $\SL_2$. Moreover, we study the rationality and the set of real points of those varieties.
\end{abstract}
\tableofcontents

\section{Introduction} \label{section: Intro}

\subsection{Setting} \label{section: setting}

Let $X$ be a complex algebraic variety. A classical problem in algebraic geometry is to determine the real forms of $X$, i.e. the real algebraic varieties $W$ such that $W_\C := W \times_{\Spec(\R)} \Spec(\C) \simeq X$. This problem is usually divided in two parts: whether $X$ admits a real form or not, and if so, determine how many there are (up to isomorphism).

In the present article, we work on a slightly different problem. We are interested in algebraic varieties endowed with an algebraic group action, and thus we only consider real forms which are compatible with this algebraic group action. More precisely, let $G$ be a complex reductive algebraic group and let $F$ be a real algebraic group such that $G \simeq F_\C$ as complex algebraic groups. Now let $X$ be a complex algebraic $G$-variety. We say that $W$ is a \textit{real F-form} of $X$ if $W_\C \simeq X$ as complex algebraic $G$-varieties. When the real algebraic group $F$ is clear from the context, we will just refer to real $F$-forms as real forms.

An important invariant of $G$-varieties is the complexity. For $B \subset G$ a Borel subgroup, the \textit{complexity} of a complex algebraic $G$-variety is defined as the maximal codimension of a $B$-orbit. Note that this does not depend on the choice of $B$. Forms over any perfect base field of complexity-zero varieties (more commonly known as spherical varieties) have already been classified by Huruguen \cite{Hur11}, Wedhorn \cite{Wed18}, and Borovoi-Gagliardi \cite{BG21}. For $T$ a torus, forms of complexity-one $T$-varieties have also been studied by Langlois in \cite{Lan15} and by Gillard in \cite{Gil22}.

Another invariant of a homogeneous space $G/H$ is the rank, which we denote rk$(G/H)$. Suppose that rk$(G/H) \leq 1$. Then, Panyushev proved in \cite{Pan95} that either $G/H$ is a spherical homogeneous space or $G/H$ is obtained from a homogeneous $\SL_2$-threefold by parabolic induction. In this sense, almost homogeneous $\SL_2$-threefolds are therefore the easiest case of complexity-one almost homogeneous varieties. Thus, let us now suppose that $G=\SL_2$ and that $X$ is a complex almost homogeneous $\SL_2$-threefold, i.e. an $\SL_2$-variety with a dense open orbit $X_0 = \SL_2/H$, where $H$ is a finite subgroup of $\SL_2$. Note that, since a Borel subgroup of $\SL_2$ is of codimension 1, these varieties are of complexity one. In this setting, the Luna-Vust theory provides a combinatorial description of all the $G$-equivariant embeddings $X_0 \hookrightarrow X$ for a given $X_0$, up to $G$-equivariant isomorphisms (see \cite[\S~9]{LV83} for details or \S \ref{Luna-Vust} for an overview). 

More specifically, we will study the real forms of minimal smooth completions of $G/H$. Here we call a \textit{minimal smooth completion} of $G/H$ a smooth complete almost homogeneous $\SL_2$-threefold, with open orbit $G/H$, such that any $\SL_2$-equivariant birational morphism $X \rightarrow X'$, with $X'$ smooth, is an isomorphism. To simplify, we will also refer to a minimal smooth completion as a \textit{model}. Those varieties are well-known, they were first studied by Umemura in \cite{Ume88} as the connected component of their automorphism groups are maximal connected algebraic subgroups of the Cremona group Bir$(\p^3)$, and the projective ones were then classified by Nakano in \cite{Nak89} as an application of Mori theory. They arise naturally in many ways in algebraic geometry, and the list includes Fano varieties such as $\p^3$, the smooth quadric $Q_3$ and the degree five del Pezzo surface $Y_5$, as well as Umemura $\p^1$-bundles and the projectivization of Schwarzenberger bundles (see \cite{Ume88} and \cite{BFT23}). From the Luna-Vust theory point of view, the full classification and the description of the corresponding combinatorial data were obtained by Moser-Jauslin \cite{MJ87} when $H \subset \{\pm Id\}$ and by Bousquet \cite{Bou00} in the general case. 

Using a generalization of the Luna-Vust theory over arbitrary perfect fields, Moser-Jauslin and Terpereau determined in \cite{MJT20} the real forms of the minimal smooth completions of $\SL_2/H$ when $H$ is a non-cyclic finite subgroup. In their case, there is always a unique smooth completion which has one or two real $F$-forms for a fixed $F$. Here we will use a similar approach to determine the real forms of $\SL_2/H$ when $H$ is  a cyclic subgroup of order $n$. However, in this case, a few new difficulties arise compared to the non-cyclic case since the automorphism group $\Aut^G(X)$ can be infinite and there are many more cases to consider. Also the underlying complete $G$-varieties are not all projective, so we have to investigate more carefully the effectiveness of the Galois descent.

For a cyclic subgroup $H \subset \SL_2$ of cardinal $k$, $H$ is conjugate to the group $A_k$ generated by $\omega_k = \begin{bmatrix}
    \zeta_k & 0 \\ 0 & \zeta_k^{-1}
\end{bmatrix}$, where $\zeta_k$ is a primitive $k$-th root of unity. A diagrammatic representation of the combinatorial data of all the models of $\SL_2/H$, as well as the correspondence with the description of Nakano, can be found in the \hyperref[appendix: A]{Appendix}. The list of the projective ones given by Nakano (cf. \cite{Nak89}) is the following: \begin{itemize}[leftmargin=7mm]

\item[1)] \textbf{The Umemura $\p^1$-bundles $X_k(n,m) \rightarrow \F_k,$ $k\geq 1,$ $n,m \geq 0$}. We note $\F_k$ the Hirzebruch surface defined by $\F_k := \p(\O_{\p^1} \oplus \O_{\p^1}(-k))$, $p_k: \F_k \rightarrow \p^1$ the projection, and $C_\infty$ the negative section of $p_k$ defined by the projection $\O_{\p^1} \oplus \O_{\p^1}(-k) \rightarrow \O_{\p^1}(-k)$. There is a natural $\SL_2$-action on $\F_k$ and $C_\infty$ is a 1-dimensional closed orbit. There is a unique homogeneous non-trivial extension (cf. \cite{Ume85})
\[ 0 \rightarrow \O_{\F_k} \rightarrow E_k(n,m) \rightarrow p_k^*(\O_{\p^1}(2-nk)) \otimes \O_{\F_k}(-mC_{\infty}) \rightarrow 0.\]
We get a $\p^1$-bundle $\pi_k(n,m): X_k(n,m) \rightarrow \F_k$ by projectivizing the vector bundle $E_k(n,m) \rightarrow \F_k$. The variety $X_k(n,m)$ is a smooth completion of $\SL_2/A_k$, and it is minimal if and only if $m \neq 1$ and ($n \neq m$ if $k=1$).

\item[2)]  \textbf{The $\p^1$-bundles $W(k) \rightarrow \p^2$, $k\geq0$}. As we just saw, $X_1(k,k)$ is not minimal and we can equivariantly contract $\pi_1(k,k)^{-1}(C_\infty)$ to $\p^1$ to get a new smooth completion of $\SL_2/A_k$, which we call $W(k)$. The variety $W(k)$ is minimal if and only if $k \neq 2$.

\item[3)] \textbf{The decomposable $\p^1$-bundles $Y(a,b)\rightarrow \p^1 \times \p^1, a > b, a+b \leq 0$}. We set $Y(a,b) := \p(\O_{\p^1 \times \p^1} \oplus \O_{\p^1 \times \p^1}(a,b))$ where $O_{\p^1 \times \p^1}(a,b)$ is the line bundle over $\p^1 \times \p^1$ of bidegree $(a,b)$. We can define an $\SL_2$-action on $Y(a,b)$ such that the projection $\pi(a,b): Y(a,b) \rightarrow \p^1 \times \p^1$ is equivariant (we consider the diagonal action on $\p^1 \times \p^1$). The variety $Y(a,b)$ is an almost homogeneous threefold of $\SL_2/A_{-a-b}$. It is minimal if and only if $a \neq \pm 1$ and $b \neq \pm 1$.

\item[4)] \textbf{The pull-backs of Schwarzenberger $\p^1$-bundles $Y_0(k)$, $k = 2m > 0$}. Given $k = 2m > 0$, define $a=1+m$ and $b=1-m$. Then we can show that Ext$^1(\O(a,b),\O)$ has a unique $\SL_2$-invariant 1-dimensional subspace $V$, which determines an extension
\[
    0 \rightarrow \O \rightarrow G_0(k) \rightarrow M \rightarrow 0,
\]
where $G_0(k)$ is a homogeneous vector bundle of rank 2 and $M:= \O_{\p^1 \times \p^1}(a,b)$. $G_0(k)$ has an $\SL_2$-linearization such that the above exact sequence is $\SL_2$-exact. We define $Y_0(k) := \p(G_0(k))$. There is an induced $\SL_2$-action on $Y_0(k)$ such that the $\p^1$-bundle projection $\pi_0(k): Y_0(k) \rightarrow \p^1 \times \p^1$ is equivariant. $Y_0(k)$ has an open orbit equivariantly isomorphic to $\SL_2/A_k$. It is minimal if and only if $k \neq 4$.

\item[5)] \textbf{The decomposable $\p^2$-bundles $\pi(r): Z(r) \rightarrow \p^1$, $r \neq 1$}. Define $E(r):= \O_{\p^1}^{\oplus2} \oplus\O_{\p^1}(r)$ and $Z(r):= \p(E(r))$. We can define an $SL_2$-action on $Z(r)$ such that the projection $\pi(r)$ is equivariant.  $Z(r)$ is minimal if and only if $r \neq 1$. For $r \neq 1$, it has an open orbit isomorphic to $\SL_2/A_{|r-1|}$.

\item [6)] \textbf{$\p^2 \times \p^1 \simeq \p(R_2) \times \p(R_1)$}. We note $R_i$ the unique irreducible representation of $\SL_2$ of dimension $i+1$. Then $\p(R_2) \times \p(R_1)$ is naturally equipped with a diagonal action of $\SL_2$. It is a minimal smooth completion of $\SL_2/A_4$.

\item[7)] Finally we have two more isolated models, namely $\p^3 \simeq \p(R_1 \oplus R_1)$ and $Q_3 \subset \p^4 \simeq \p(R_1 \oplus R_1 \oplus R_0)$. The group $\PGL_2$ identifies with an open subset of $\p^3$. The natural $\SL_2$-action on $\PGL_2$ extends to the one on $\p^3 \simeq \p(R_1 \oplus R_1)$ so we get a smooth completion of $\PGL_2$. In a similar way, $\SL_2$ is defined by the equation $ad-bc=1$ in $\mathbb{A}^4$ so we set $Q_3 := \{ad-bc=e^2\} \subset \p^4$. It is easy to show that the action of $\SL_2$ on itself by translation extends to $Q_3$. Furthermore, $Q_3$ and $\p^3$ are both minimal since they have Picard number equal to 1.

\end{itemize}
\subsection{Our results}
In this paper, we determine the equivariant real forms of all these models using the Luna-Vust theory. The group $\SL_2$ has two real forms up to isomorphism, namely $\SL_{2,\R}$ and $\Spin_{3,\R}$. Their set of real points are $\SL_2(\R)$ and $\SU_2(\C)$ respectively. Therefore, we classify the equivariant real forms for these two groups. Moreover we determine which real forms (in the classical sense) appear as an equivariant real form. Finally, we also determine which equivariant real forms are rational and whether they have real points. Recall that a rational variety always has real points. In this case we show that the converse is true. The main result of this article is the following (see Section \ref{section: theorem} for the proof):

\begin{theoremaux} \label{Main Theorem}
    Let $G=\SL_{2,\C}$, $H \subset G$ a finite cyclic subgroup, and $X$ a complex projective minimal smooth completion of $G/H$. Then the equivariant real forms of $X$ are either rational, or they have no real points. Furthermore the number of rational real forms and real forms with an empty real locus, up to isomorphism, are listed in the table below for each model $X$. Each column indicates the following number : \begin{enumerate}[leftmargin=7mm, label=(\roman*)]
        \item Number of real forms.
        \item Number of $\SL_{2,\R}$-real forms.
        \item Number of $\Spin_{3,\R}$-real forms.
        \item Number of real forms that appear in columns (ii) or (iii).
        
    \end{enumerate}
   
\end{theoremaux}

\[\tabulinesep=0.8mm
\hspace{-5mm}\begin{tabu}{|c|c|c|c|c||c|c|c|c|}
	\hline 
	\multirow{2}{*}{\text{Models}} &   \multicolumn{4}{c||}{\text{Rational real forms}} &
	  \multicolumn{4}{c|}{\text{Real forms w/o real points}} \\ \cline{2-9}
    & (i) & (ii) & (iii) & (iv) & \hspace*{1mm}(i) & \hspace*{1mm}(ii) & \hspace*{1mm} (iii) & \hspace*{1mm}(iv)\\
    \hline
	Z(1-k),X_k(n,m),\text{ $k$ even} &1&1&0&1&1&0&1&1\\
	\hline
	Z(1-k),X_k(n,m), \text{ $k$ odd} &1&1&0&1&0&0&0&0\\
	\hline
        Y(a,-a) &2&2&1&2&3&2&1&3\\
	\hline
	Y(a,b), \text{ $a+b \neq 0$ even}  &1&1&0&1&2&0&1&1\\
	\hline
	Y(a,b), \text{ $a+b$ odd}  &1&1&0&1&2&0&0&0\\
	\hline
    Y_0(2) &2&2&0&2&4&2&0&2 \\
    \hline
    Y_0(k), &2&1&1&2&2&1&1&2 \\
    \hline
    W(1) &2&2&1&2&1&0&1&1\\
    \hline
    W(n+1), \text{ $n \geq 2$} &1&1&0&1&0&0&0&0\\
    \hline
    Q_3 \subset \p(R_1 \oplus R_1 \oplus R_0)&2&1&1&2&1&1&1&1\\
    \hline
    \p^3 \simeq \p(R_1 \oplus R_1) &1&1&1&1&1&1&1&1 \\
    \hline
    \p(R_2) \times \p(R_1) &1&1&1&1&1&0&0&0\\
    \hline
\end{tabu} \]

\begin{remark*}
    Note that for $\p(R_2) \times \p(R_1) \simeq \p^2 \times \p^1$ there are two real forms in total, but only the rational one is equivariant under an action of a real form of $\SL_2$. Actually, we see that the rational real form is equivariant under an action of both $\SL_{2,\R}$ and $\Spin_{3,\R}$.
\end{remark*}

\begin{remark*} We also determine the equivariant real forms of the nonprojective minimal smooth completions of $G/H$, however we do not study the rationality nor the real points of those varieties (see Section \ref{section: forms}).
\end{remark*}

\subsection{Structure of the article} Section \ref{section: preliminaries} contains preliminary results necessary for the rest of the paper. In Section \ref{section: real structures}, we review real group structures on reductive algebraic groups as well as real structures on varieties endowed with an algebraic group action. We also explain how they correspond to real forms. Then, in Section \ref{Luna-Vust} we see how Luna-Vust theory specializes to classify the equivariant embeddings of homogeneous $\SL_2$-threefolds, and how a generalization of this theory can be used to classify real forms. In Section \ref{section: group}, we explain how to compute the equivariant automorphism groups of equivariant embeddings of homogeneous spaces using the combinatorial data of the Luna-Vust theory. We then compute those groups for all the minimal smooth completions of $\SL_2/H$, where $H$ is a cyclic subgroup. In Section \ref{section: forms}, we determine all the real forms of the $\SL_2$-threefolds listed previously. Finally, Section \ref{section: theorem}, we study the rationality and the real locus of the real forms we obtained previously, and then prove the \hyperref[Main Theorem]{Main Theorem}.

\subsection{Acknowledgements}

I would like to thank my advisors Ronan Terpereau and Lucy Moser-Jauslin for their comments and suggestions, which helped in writing and improving this article.

\section{Preliminaries} \label{section: preliminaries}

\subsection{Real structures and real forms}\label{section: real structures}

We start by recalling the notions of real forms and real structures on a given complex algebraic group $G$ and then on a $G$-variety, and how the notions are related. More details can be found in \cite{MJT21}.

\begin{definition}
    Let $G$ be a complex algebraic group.
    \begin{enumerate}[label=(\roman*)]
        \item A \emph{real form} of $G$ is a pair ($F$,$\Theta$) with $F$ a real algebraic group and $\Theta : G \rightarrow F_{\C} = F \times_{\Spec(\R)} \Spec(\C)$ an isomorphism of algebraic groups. (To simplify notation we will drop the isomorphism $\Theta$ and simply write that $F$ is a form of $G$).
        
        \item A \emph{real group structure} $\sigma$ on $G$ is a scheme involution on $G$ such that the diagram 
        \[
        \xymatrix@R=4mm@C=2cm{
         G \ar[rr]^{\sigma} \ar[d]  && G \ar[d] \\
          \Spec(\C)  \ar[rr]^{\gamma^*\ =\ \Spec(z \mapsto \overline{z})} && \Spec(\C)  
        }
        \]
        commutes and 
        \[
        \iota_G \circ  \sigma = \sigma \circ \iota_G \text{ and } m_G \circ (\sigma \times \sigma) = \sigma \circ m_G,
        \] where $\iota_G : G \rightarrow G$ is the inverse morphism and $m_G : G \times G \rightarrow G$ is the multiplication morphism.
        
        \item Two real group structures $\sigma$ and $\sigma'$ are equivalent if there exists a group automorphism $\varphi \in \Aut(G)$ such that $\sigma' = \varphi \circ \sigma \circ \varphi^{-1}$.
        
        \item If $G$ is a complex algebraic group with a real structure $\sigma$, then the set of fixed point $G(\C)^\sigma$ is called the \emph{real locus} of $(G,\sigma)$.
    \end{enumerate}
\end{definition}

There is a correspondence between real group structures on $G$ and real forms of $G$ given as follows (see \cite[\S~1.4]{FSS98}).

    $\bullet$ If $\sigma$ is a real group structure on $G$, one associates the real algebraic group $G_0 := G/\langle\sigma\rangle$ and the isomorphism $\Theta$ given by $(q,f)$, where $q: G \rightarrow G_0$ is the quotient morphism and $f: G \rightarrow \text{Spec($\C$)}$ is the structure morphism.

    $\bullet$ To a real form $F$ of $G$, one associates the real group structure given by $\sigma : G \simeq F_\C \xrightarrow{Id \times Spec(z\mapsto \bar{z})} F_\C \simeq G$.

    Moreover, two real forms of $G$ are isomorphic (as real algebraic groups) if and only if the corresponding real group structures are equivalent.

\begin{example}
For $G = \SL_2$, let $\sigma_s$ be the real group structure defined by $\sigma_s(g) = \overline{g}$ where $\overline{g}$ is the complex conjugate of $g$ and let $\sigma_c$ be the real group structure defined by $\sigma_c(g) = \prescript{t}{}\sigma_s(g)^{-1}$. Note that $\sigma_c(g) = e\sigma_s(g)e^{-1}$ for all $g \in G$, where $e = \begin{bmatrix}
    0 & 1 \\ -1 & 0
\end{bmatrix}$. The real loci of $\sigma_s$ and $\sigma_c$ are respectively $\SL_2(\R)$ and $\SU(2)$, and the corresponding real forms are respectively $\SL_{2,\R}$ and $\Spin_{3,\R}$. Also, any real group structure on $G$ is strongly equivalent to either $\sigma_s$ or $\sigma_c$.
\end{example}

We now define real forms and equivariant real structures for a $G$-variety.

\begin{definition}
    Let $(G,\sigma)$ be a complex algebraic group with a real structure, and let $X$ be a $G$-variety. We denote by $F = G/ \langle \sigma \rangle$ the corresponding real form of $G$.
    
    $\bullet$ An \emph{$(\R,F)$-form} of $X$ is an $F$-variety $Z$ together with an isomorphism $Z_\C  \simeq X$ of $G$-varieties, where $G$ acts on $Z_\C$ through $F_\C \simeq G$.
    
    $\bullet$ A \emph{$(G,\sigma)$-equivariant real structure} on $X$ is a scheme involution $\mu$ on $X$  such that the diagram 
\[
\xymatrix@R=4mm@C=2cm{
    X \ar[rr]^{\mu} \ar[d]  && X \ar[d] \\
    \Spec(\C)  \ar[rr]^{\gamma^*\ =\ \Spec(z \mapsto \overline{z})} && \Spec(\C)  
  }
  \]
commutes, and such that
\[ \forall g \in G(\C), \forall x \in X(\C),\ \mu(g\cdot x)=\sigma(g)\cdot \mu(x).
\]   

$\bullet$ Two $(G,\sigma)$-equivariant real structure $\mu_1$ and $\mu_2$ are called equivalent if there exists a $G$-equivariant automorphism $\varphi$ such that $\mu_2 = \varphi \circ \mu_1 \circ \varphi^{-1}$.

$\bullet$ A $(G,\sigma)$-equivariant real structure $\mu$ on $X$ is \emph{effective} if $X$ is covered by $\langle \mu \rangle$-stable affine open subsets. In this case, the categorical quotient $X/ \langle \mu \rangle$, which always exists as a real algebraic space, is in fact a real algebraic variety.

$\bullet$ The \emph{real locus} of $(G,\sigma)$-equivariant real structure $\mu$ on $X$ is the set of fixed points $X(\C)^\mu$; it identifies with the set of $\R$-points of the corresponding real form $X/ \langle \mu \rangle$. Moreover, it is endowed with an action of $G(\C)^\sigma$.
\end{definition}

As for algebraic groups, there is a bijection between isomorphism classes of real $F$-forms of $X$ (as real algebraic varieties) and equivalence classes of effective $(G,\sigma)$-equivariant real structures on $X$ (see \cite[\S~5]{Bor20})

Given the existence of a $(G,\sigma)$-equivariant real structure, one can use Galois cohomology to parametrize all of them. We note $\Gamma = \text{Gal}(\C/\R) = \{1,\gamma\}$. We define a $\Gamma$-group structure on $\Aut^G(X)$ as follows:
\[\text{inn}_{\mu}: \Gamma \times \Aut^G(X) \rightarrow \Aut^G(X), (\gamma,\varphi) \mapsto \mu_{\gamma} \circ \varphi \circ \mu_{\gamma}^{-1}.\]
The next result is then a straightforward consequence of the definition of 1-cocycles.

\begin{proposition} \emph{(\cite[Lemma 2.11]{MJT21}\label{prop:Z})}
	Let $(G,\sigma)$ be a complex algebraic group with a real group structure, and let $(X,\mu_0)$ be a homogeneous space with a $(G,\sigma)$-equivariant real structure.
	There is a bijection of pointed sets
	\[
	\begin{array}{ccl}
		 Z^1(\Gamma,\Aut_{\C}^{G}(X)) & \xrightarrow{\sim} & \left\{\text{$(G,\sigma)$-equivariant real structure on $X$}
		\right\} \\
	\varphi & \mapsto & \varphi \circ \mu_0 \\
	\end{array}
	\]
\end{proposition}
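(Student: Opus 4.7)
The plan is to define the map $\varphi \mapsto \varphi\circ\mu_0$ explicitly on 1-cocycles and construct its inverse, then check all the structural properties are preserved in each direction. Since $\Gamma = \{1,\gamma\}$ with $\gamma^2 = 1$, a 1-cocycle $\varphi \in Z^1(\Gamma,\Aut^G_\C(X))$ is uniquely determined by $\psi := \varphi(\gamma) \in \Aut^G_\C(X)$, and the cocycle condition $\varphi(\gamma\cdot\gamma) = \varphi(\gamma)\cdot{}^\gamma\varphi(\gamma) = \mathrm{id}$ unwinds, using the twisted action $\mathrm{inn}_{\mu_0}$, to the single identity
\[
\psi \circ \mu_0 \circ \psi \circ \mu_0^{-1} = \mathrm{id}.
\]

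First I would check that given such a $\psi$, the morphism $\mu := \psi\circ\mu_0$ is a $(G,\sigma)$-equivariant real structure. It covers $\gamma^*$ on $\Spec(\C)$ because $\mu_0$ does and $\psi$ is $\C$-linear; it squares to the identity precisely by the cocycle relation displayed above (after rewriting $(\psi\circ\mu_0)^2 = \psi\circ(\mu_0\circ\psi\circ\mu_0^{-1})\circ\mu_0^2$); and the $\sigma$-equivariance
\[
\mu(g\cdot x) = \psi(\mu_0(g\cdot x)) = \psi(\sigma(g)\cdot\mu_0(x)) = \sigma(g)\cdot\mu(x)
\]
follows from the $\sigma$-equivariance of $\mu_0$ together with the $G$-equivariance of $\psi$.

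Next I would construct the inverse by sending a $(G,\sigma)$-equivariant real structure $\mu$ to $\psi := \mu\circ\mu_0^{-1}$. The crucial point is that $\psi$ is a genuine $G$-equivariant $\C$-automorphism of $X$: it covers the identity on $\Spec(\C)$ since both $\mu$ and $\mu_0^{-1}$ cover $\gamma^*$, and it is $G$-equivariant because for $g\in G$, $x\in X$,
\[
\psi(g\cdot x) = \mu(\mu_0^{-1}(g\cdot x)) = \mu(\sigma(g)\cdot\mu_0^{-1}(x)) = \sigma^2(g)\cdot\psi(x) = g\cdot\psi(x),
\]
using $\sigma^2 = \mathrm{id}$. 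The cocycle condition for $\psi$ then reduces to $\mu^2 = \mathrm{id}$, and it is immediate that the two assignments $\psi \mapsto \psi\circ\mu_0$ and $\mu \mapsto \mu\circ\mu_0^{-1}$ are mutually inverse. Finally, the distinguished cocycle $\psi = \mathrm{id}$ maps to $\mu_0$, so the bijection is a bijection of pointed sets.

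There is no serious obstacle here; the only subtlety is bookkeeping the twisted $\Gamma$-action $\mathrm{inn}_{\mu_0}$ on $\Aut^G_\C(X)$ correctly so that the cocycle identity translates exactly to $\mu^2 = \mathrm{id}$, and checking that $\psi = \mu\circ\mu_0^{-1}$ really does descend to an automorphism of $X$ as a complex scheme (rather than as an $\R$-scheme) — this is where the cancellation of the two antiholomorphic pieces of $\mu$ and $\mu_0^{-1}$ is used.
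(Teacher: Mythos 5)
Your proof is correct and is exactly the argument the paper has in mind: the paper gives no proof of its own, citing \cite[Lemma 2.11]{MJT21} and remarking that the statement is ``a straightforward consequence of the definition of 1-cocycles,'' which is precisely what you carry out by identifying a cocycle with $\psi=\varphi(\gamma)$, translating the cocycle identity under $\mathrm{inn}_{\mu_0}$ into $\psi\circ\mu_0\circ\psi\circ\mu_0^{-1}=\mathrm{id}$, and checking that $\psi\mapsto\psi\circ\mu_0$ and $\mu\mapsto\mu\circ\mu_0^{-1}$ are mutually inverse and base-point preserving. Your closing remark that the two antiregular maps compose to a genuine $G$-equivariant $\C$-automorphism (using $\sigma^2=\mathrm{id}$ and $(\gamma^*)^2=\mathrm{id}$) is the only point requiring care, and you handle it correctly.
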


\begin{remark}
    In fact the correspondence above induces a bijection between the equivalence classes of $(G,\sigma)$-equivariant real structure on $X$ and $H^1(\Gamma,\Aut_{\C}^{G}(X))$. However, we will not use this result in this paper.
\end{remark}

\subsection{Luna-Vust Theory for equivariant embeddings of $\boldsymbol{\SL_2/H}$} \label{Luna-Vust}

We now recall how the Luna-Vust theory specializes to classify the equivariant embeddings of homogeneous $\SL_2$-threefolds. This section is inspired by \cite[Appendix B]{MJT20}. For more information on the Luna-Vust theory in the general case, see \cite{LV83} and \cite[\S~3]{Tim11}.

This description was given by Luna-Vust \cite[\S~9]{LV83} for equivariant embeddings of $\SL_2$, and by Moser-Jauslin \cite{MJ87} and Bousquet \cite{Bou00} for equivariant embeddings of $\SL_2/H$.

\begin{itemize}[leftmargin=2mm]
    \item \emph{The colors.} We start by describing the set of colors of $G/H$ where $H \subset G = \SL_2$ is a finite subgroup. We fix $B = \begin{bmatrix}
    * & 0 \\ * & *
\end{bmatrix}$ and we denote the coordinate ring of $G$ by $\C[G] = \C[w,x,y,z]/(xy-wz-1)$ with $g = \begin{bmatrix}
    x & w \\ z & y
\end{bmatrix} \in G$. In general, the set of colors is defined by
\[\D^B = \D^B(G/H) =  \{B\text{-stable prime divisors of }G/H\}.\]
In our case, we have a right action of $H$ on $\p^1 \simeq B \backslash G$ and this set identifies with $\p^1/H$ via the map:
\begin{align*}
    \p^1/H & \xrightarrow{\sim} \D^B \\
    [\alpha : \beta]H & \mapsto Z(\alpha w + \beta x)H,
\end{align*}
where $Z(\alpha w + \beta x)$ denotes the zero locus of the regular function $\alpha w + \beta x$ in $G$.\\

\item \emph{The invariant valuations.} We now describe the set 
\[ \V^G = \V^G(G/H) = \{G\text{-invariant geometric valuations of }\C(G/H)\}.\]
We start with the case where $H$ is trivial. A $G$-invariant geometric valuation on $\C(G)$ is determined by its restriction on the subset
\[ \p^1 \simeq \{ \alpha w + \beta x \: | \: [\alpha:\beta] \in \p^1\} \subset \C[G]. \]
Furthermore, we can show that for any $\nu \in \V^G$, there exists $[\alpha_0 : \beta_0] \in \p^1$ and $ -1 \leq r = \frac{p}{q} \leq 1$ (with $p$ and $q$ relatively prime and $q \geq 1$) such that $\nu(\alpha_0 w + \beta_0 x) = p$ and $\nu(\alpha w + \beta x) = -q$ for all $[\alpha : \beta] \neq [\alpha_0 : \beta_0]$. Therefore,
\[\V^G \simeq \{\p^1 \times ([-1,1] \cap \mathbb{Q})\} / \sim\]
where $([\alpha : \beta],-1) \sim ([0 : 1],-1)$ for all $[\alpha : \beta] \in \p^1$. For $j \in \p^1$ and $r \in [-1:1] \cap \mathbb{Q}$, we denote by $\nu(j,r)$ the $G$-invariant geometric valuation corresponding to $(j,r)$. The valuation corresponding to $(j,-1)$ does not depend on $j$; we denote this valuation by $\nu_0$.

Now for any finite subgroup $H \subset G$, any $G$-invariant geometric valuations on $\C(G)^H$ extends to a valuation on $\C(G)$. Thus we can also associate a couple $(j,r)$ with $j \in \p^1/H$ and $r \in [-1,1] \cap \mathbb{Q}$ to a $G$-invariant geometric valuation on $\C(G)^H$. We again denote by $\nu(j,r)$ the valuation of $\V^G(G/H)$ corresponding to $j \in \p^1/H$ and $r \in [-1,1] \cap \mathbb{Q}$.

We let $b(j) \in \mathbb{Q}$ be the maximum rational number $r$ such that $\V^G(G/H)$ has a valuation of the form $\nu(j,r)$. For example, for $H = A_k$ we have 
\begin{align*}
    & b([0:1]) = b([1:0]) = 1 \\
    & b(j) = \frac{4}{m}-1 \text{ if } j\notin \{[0:1],[1:0]\}
\end{align*}
where $m = n$ if $n$ is even, and $m = 2n$ if $n$ is odd.\\

\item \emph{The colored data.} Now let $X$ be a $G$-variety in the $G$-birational class of $G/H$, and let $\phi : X_0 \dashrightarrow X$ be a $G$-equivariant birational map. The pair $(X,\phi)$ is called a $G$-model of $X_0$. In the general theory, we associate to each $G$-orbit $Y \subset X$ a pair
\begin{align*}
\bullet \: & \V^G_Y = \{ \nu_D \in \V^G(X) \: | \text{ $Y \subset D$, where $D$ is a $G$-stable prime divisor of $X$}\} \\ &\subset \V^G(X) \simeq \V^G;\text{ and}\\
\bullet \:  & \D^B_Y = \{D \in \D^B(X) \: | \: Y \subset D\} \subset \D^B(X) \simeq \D^B.
\end{align*}
Such a pair is called the colored data of the $G$-orbit $Y$, and the sets of all those pairs is called the colored equipment of $X$.

We give a description of all possible colored data in our particular case, i.e. where $X$ is $G$-equivariant embedding of $G/H$ with $H \subset G = \SL_2$ a finite subgroup. We call the facet of an orbit the set of valuations in $\V_1(G/H) = \V^G(G/H) \backslash \{\nu_0\}$ which dominate the local ring of the closure of the given orbit.

The (non-open) $G$-orbits are divided into the six following types:\smallskip \begin{itemize}[leftmargin=7mm]
\item[$\bullet$] {\bf Type $\mathcal{C}(j,r)$} ($2$-dimensional orbit): $j\in\p^1/H$ and $r\in\mathbb{Q}\cap]-1,b(j)]$.\\ For this case, $\V_Y^G=\{\nu(j,r)\}$ is a single element of $\V_1$, 
 $\D_Y^B=\varnothing$, and the facet of $Y$ is $\{\nu(j,r)\}$.\smallskip

\item[$\bullet$]{\bf Type $\mathcal{A}_N(j_1,\ldots,j_N,r_1,\ldots,r_N)$} (orbit isomorphic to $\p^1$):  $j_1,\ldots,j_N$ are  $N$ distinct elements of $\p^1/H$, and $r_i\in\mathcal{Q}\cap]-1,b(j_i)]$ for each $1 \leq i \leq N$.\\
Suppose also that $\sum_{i=1}^N\frac{1}{1+r_i}\ge1$.  Then $\V_Y^G=\{\nu(j_1,r_1),\ldots,\nu(j_N,r_N)\}$ and $\D_Y^B=\p^1/H\setminus\{j_1,\ldots,j_N\}$. The facet of $Y$ is 
\[
\bigcup_{i=1}^N\{\nu(j_i,s_i)\ |\ -1<s_i<r_i\}\cup \{\nu(j,s)\ |\ j\not\in\{j_1,\ldots,j_N\},-1<s\le b(j)\}.
\]\smallskip

\item[$\bullet$]{\bf Type $\mathcal{AB}(j,r_1,r_2)$} (orbit isomorphic to $\p^1$): $j\in\p^1/H$, and $r_1,r_2$ are two rational numbers satisfying $-1\le r_1<r_2\le b(j)$. Then $\V_Y^G=\{\nu(j,r_1), \nu(j,r_2)\}$ and $\D_Y^B=\varnothing$. The facet of $Y$ is $\{\nu(j,s)\ |\ r_1<s<r_2\}$.\smallskip

\item[$\bullet$]{\bf Type $\mathcal{B}_+(j,r)$} (orbit isomorphic to $\p^1$): $j\in \p^1/H$, and $r\in\mathcal{Q}\cap[-1,b(j)[$. Then $\V_Y^G=\{\nu(j,r)\}$ and $\D_Y^B=\{j\}$. The facet of $Y$ is $\{\nu(j,s)\ |\ r<s\leq b(j)\}$.\smallskip

\item[$\bullet$]{\bf Type $\mathcal{B}_-(j,r)$} (orbit isomorphic to $\p^1$): $j\in \p^1/H$, and $r\in\mathcal{Q}\cap]0,b(j)[$. Then $\V_Y^G=\{\nu(j,r))\}$ and $\D_Y^B=\p^1/H\setminus\{j\}$. The facet of $Y$ is $\{\nu(j,s)\ |\ r<s\leq b(j)\}$.\smallskip

\item[$\bullet$] {\bf Type $\mathcal{B}_0(j,r)$} (fixed point): $j\in \p^1/H$, and $r\in\mathcal{Q}\cap]0,b(j)[$. Then $\V_Y^G=\{\nu(j,r))\}$ and $\D_Y^B=\p^1/H$. The facet of $Y$ is $\{\nu(j,s)\ |\ r<s\leq b(j)\}$.

\end{itemize}

The facets of the colored data of a $G$-model do not intersect, and their unions is closed in $\V^G$. Therefore, we can now represent the colored equipment of equivariant embeddings of homogeneous $\SL_2$-threefolds. We represent the set $\V^G(G/H)$ by a skeleton diagram, and we represent each orbit by its facet. Since the facets for orbits of types $\mathcal{B}_{-}$, $\mathcal{B}_0$ and $\mathcal{B}_{+}$ are the same, we differentiate them with a sign $0$, $+$ or $-$. See the \hyperref[appendix: A]{Appendix} for many examples of such diagrams.\\

\item \emph{Luna-Vust over the field of real numbers.} Finally, we present how the Luna-Vust theory allows us to find real forms of equivariant embeddings of $\SL_2/H$. To do this, Moser-Jauslin and Terpereau defined the following action of the Galois group $\Gamma$ on the colored equipment of $G/H$ in \cite[\S~4.3]{MJT20}, generalizing the action defined by Huruguen \cite{Hur11} in the case of spherical varieties.

Let $\sigma \in \{\sigma_s, \sigma_c\}$ and let $\mu$ be a $(G,\sigma)$-equivariant real structure on $G/H$. By Proposition \ref{prop:Z}, we have $\mu(gH) = \sigma(g)tH$ for a certain $t \in G$. Then the action of the Galois group $\Gamma$ on the set of colors $\D^B(G/H) \simeq \p^1/H$ is given by
\[\left\{ \begin{array}{ll}
    [\alpha : \beta]H \mapsto [\overline{\alpha} : \overline{\beta}]tH & \text{if $\sigma = \sigma_s$; and} \\[1pt] 
    [\alpha : \beta]H \mapsto [-\overline{\beta} : \overline{\alpha}]tH & \text{if $\sigma = \sigma_c$}
\end{array} \right. \]
and the $\Gamma$-action on $\V^G(G/H)$ is defined by
\[ \forall j \in \p^1/H, \forall \nu(j,r) \in \V^G(G/H), \gamma \cdot \nu(j,r) = \nu(\gamma \cdot j,r).\]
Note that this action preserves the type of the $G$-orbits. It can be visualised on the skeleton diagrams where it corresponds to a permutation of the spokes.\\

We can now give the following result that determines when an equivariant real structure on a homogeneous space $X_0$ extends to an effective equivariant real structure on an equivariant embedding of $X_0$, using the previously defined $\Gamma$-action on the colored equipment of that embedding.

\begin{theorem}  \emph{(\cite[Theorem 4.7]{MJT20}\label{th:E})}
Let $H$ be a finite subgroup of $G=\SL_2$, let $\sigma$ be a real group structure on $G$ corresponding to the real form $F$ of $G$, and
let $\mu$ be a $(G,\sigma)$-equivariant real structure on $X_0=G/H$.
Let $X_0 \hookrightarrow X$ be a $G$-equivariant embedding of $X_0$. 
Then $\mu$ extends to an effective $(G,\sigma)$-equivariant real structure $\tilde{\mu}$ on $X$, which corresponds to an $(\R,F)$-form of $X$ through the map $X \mapsto X/\left\langle \tilde{\mu} \right \rangle$, if and only if
\begin{enumerate}[leftmargin=10mm]
\item the $\Gamma$-actions on $\V^G(G/H)$ and $\D^B(G/H)$ induced by $\mu$ preserves the collection of colored data of the $G$-orbits of $X$; and
\item every $G$-orbit of $X$ of type $\mathcal{B}_0$ or $\mathcal{B}_-$ is stabilized by the $\Gamma$-action.
\end{enumerate}
\end{theorem}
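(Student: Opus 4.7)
The plan is to prove the two implications separately, using the Luna-Vust classification of $G$-equivariant embeddings of $G/H$ together with the corresponding classification of $G$-equivariant morphisms between $G$-models.

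\emph{Necessity.} Suppose $\tilde{\mu}$ extends $\mu$ to an effective equivariant real structure on $X$. Since $\tilde{\mu}(g\cdot x) = \sigma(g)\cdot \tilde{\mu}(x)$, the involution $\tilde{\mu}$ permutes the $G$-orbits of $X$. Via the bijection between $G$-orbits and their colored data $(\V_Y^G, \D_Y^B)$, this permutation becomes a permutation of the colored equipment of $X$. A direct computation using the explicit description $\mu(gH) = \sigma(g)tH$ and the natural action on valuations shows that this permutation is precisely the $\Gamma$-action on $\V^G(G/H)$ and $\D^B(G/H)$ defined in the paragraph preceding the theorem, which gives (1). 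For (2), suppose for contradiction that $Y$ is a $G$-orbit of type $\mathcal{B}_0$ or $\mathcal{B}_-$ with $\tilde{\mu}(Y)\neq Y$. Effectiveness would require a $\tilde{\mu}$-stable $G$-stable affine open neighborhood of the pair $\{Y,\tilde{\mu}(Y)\}$; such an open would have its set of colors containing $\D_Y^B\cup \D_{\tilde{\mu}(Y)}^B$. Since $\D_Y^B$ is already $\p^1/H$ (case $\mathcal{B}_0$) or $\p^1/H\setminus\{j\}$ (case $\mathcal{B}_-$), the union exhausts $\p^1/H$, and the Luna-Vust criterion for affineness of $G$-stable opens fails. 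This contradiction proves (2).

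\emph{Sufficiency.} Assume (1) and (2). The composition $X \dashrightarrow X_0 \xrightarrow{\mu} X_0 \hookrightarrow X$ is a $\sigma$-semi-linear birational self-map of $X$. By (1), the induced map on colored data stabilizes the colored equipment of $X$, so the Luna-Vust classification of $G$-equivariant morphisms between $G$-models (applied to $X$ and its $\sigma$-twist) extends it to a morphism $\tilde{\mu}\colon X\to X$. Since $\mu^2 = \mathrm{id}$ on the dense open $X_0$, $\tilde{\mu}$ is an involution, hence a $(G,\sigma)$-equivariant real structure on $X$ extending $\mu$. To verify effectiveness, I cover $X$ by $\tilde{\mu}$-stable $G$-stable affines indexed by the $\Gamma$-orbits of $G$-orbits of $X$. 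For a $\Gamma$-fixed orbit $Y$, the simple Luna-Vust model $U_Y$ has $\Gamma$-stable colored data, hence is $\tilde{\mu}$-stable and affine. For a $\Gamma$-orbit consisting of two distinct $G$-orbits $\{Y,\tilde{\mu}(Y)\}$, condition (2) guarantees that $Y$ is of type $\mathcal{C}$, $\mathcal{A}_N$, $\mathcal{AB}$, or $\mathcal{B}_+$; a case analysis then shows that $U_Y\cup U_{\tilde{\mu}(Y)}$ is a $\tilde{\mu}$-stable $G$-stable affine open subset containing both orbits. These affines cover $X$, so $\tilde{\mu}$ is effective, and the correspondence between effective equivariant real structures and $(\R,F)$-forms gives the asserted form $X/\langle\tilde{\mu}\rangle$.

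\emph{Main obstacle.} The main technical difficulty lies in the effectiveness part, namely showing that $U_Y\cup U_{\tilde{\mu}(Y)}$ is affine precisely when $Y$ avoids types $\mathcal{B}_0$ and $\mathcal{B}_-$. This requires the explicit affine presentations of the simple Luna-Vust models of $\SL_2$-threefolds together with a detailed bookkeeping of how the colored data $(\V^G_Y, \D^B_Y)$ and $(\V^G_{\tilde{\mu}(Y)}, \D^B_{\tilde{\mu}(Y)})$ combine in their joint neighborhood. The dichotomy encoded in condition (2) is exactly what is needed here: orbits of types $\mathcal{B}_0$ and $\mathcal{B}_-$ carry too many colors for the glued simple model to remain affine when the pair is non-trivially swapped by $\tilde{\mu}$, whereas for the remaining orbit types the gluing stays in the affine realm.
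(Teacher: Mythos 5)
First, note that the paper does not prove this statement at all: it is quoted verbatim from \cite[Theorem 4.7]{MJT20} and used as a black box, so there is no in-paper proof to compare against. Judged on its own merits, your proposal has the right overall shape (necessity by observing that $\tilde{\mu}$ permutes $G$-orbits and hence their colored data; sufficiency by extending the $\sigma$-semilinear birational self-map via the Luna--Vust classification and then checking effectiveness chart by chart), and the necessity of (1) and the extension step in the sufficiency direction are essentially fine.

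However, your treatment of effectiveness --- which, as the introduction of the paper stresses, is exactly the delicate point because these complete varieties need not be projective --- contains a genuine error. You repeatedly invoke ``$\tilde{\mu}$-stable \emph{$G$-stable} affine open'' neighborhoods of the relevant orbits. Orbits of type $\mathcal{A}_N$, $\mathcal{AB}$, $\mathcal{B}_\pm$ are isomorphic to $\p^1$, and no affine open subset can contain a complete curve, so such neighborhoods do not exist for \emph{any} of these types; the objects that are affine in Luna--Vust theory are the $B$-charts (only $B$-stable), as in Proposition \ref{prop} of the paper, and effectiveness only asks for $\langle\mu\rangle$-stable affine opens, not $G$-stable ones. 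Relatedly, your necessity argument for (2) --- that $\D_Y^B\cup\D_{\tilde{\mu}(Y)}^B$ exhausting $\p^1/H$ kills affineness --- proves too much: an orbit of type $\mathcal{B}_0$ already has $\D_Y^B=\p^1/H$ and nonetheless admits a $B$-chart, so color exhaustion alone cannot be the obstruction. The correct argument must work with the combinatorial conditions (C), (F), (W) on candidate pairs $(\W,\RR)$ and with $\mu$-stable intersections of translated $B$-charts, and must explain why the dichotomy falls precisely along the line $\{\mathcal{B}_0,\mathcal{B}_-\}$ versus the rest; as written, your case analysis is asserted rather than carried out, and the tools you name for it are the wrong ones. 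Since this is the heart of the theorem (and you correctly identify it as the main obstacle), the proposal as it stands does not constitute a proof.
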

\end{itemize}

\section{Equivariant automorphism groups} \label{section: group}

In this section, we first see how we can use the Luna-Vust theory to compute the equivariant automorphism groups of equivariant embeddings of homogeneous spaces. Then, we apply to the particular case of the minimal smooth completions of $\SL_2/A_k$ listed in Section \ref{section: setting}.

Let $k$ be an algebraic closed field, $G$ a reductive group over $k$, $H \subset G$ an algebraic subgroup of $G$ and $B \subset G$ a Borel subgroup of $G$. We denote by $K$ the field $k(G)^H = k(G/H)$.

We have an isomorphism (see \cite[Proposition 1.8]{Tim11})
\begin{align*}
	N_G(H)/H & \longrightarrow Aut^G(G/H)  \\
	nH & \longmapsto ( \varphi_n : kH \mapsto kn^{-1}H).
\end{align*}

We consider two group actions on $G/H$ :
\begin{itemize}
    \item The action of $A = Aut^G(G/H) \cong N_G(H)/H$
    \item The action of $G$ by left multiplication.
\end{itemize}

Those actions induce actions of $A$ and $G$ on $K$. Indeed, for $f \in K$, $n \in N_G(H)$ and $g \in G$, we define:
\begin{itemize}
    \item $(g \cdot f)(x) = f(g^{-1}x)$ and,
    \item $(\varphi_n(f))(kH) = f(\varphi_n^{-1}(kH)) = f(knH).$
\end{itemize}

We can now define an action of $A$ on $\V^G$ and on $\D^B$.

\begin{itemize}
    \item Action of $A$ on $\V^G$: If $\nu \in \V^G$, and if $n \in N_G(H)$, then $((\varphi_n \nu)(f)) = \nu(\varphi_n^{-1}(f)).$ Since $G$ acts on $K$ on the left, this does indeed give a $G$-stable geometric valuation on $G/H$.

\item Action of $A$ on $\D^B$: Again, since $B$ acts on the left and the action of $A$ is induced by a right action of $N_G(H)$, the action of $A$ on $G/H$ induces an action on $B$-stable divisors.
\end{itemize}

Let $X_0 = G/H$. We can now determine when a $G$-equivariant automorphism of $X_0$ extends to an automorphism on a $G$-model of $X_0$.

\begin{theorem}\label{prop : Aut}
	Let $(X,\psi)$ be a $G$-model of $X_0$, and $\varphi \in A$. Let $\mathbb{F}(X,\psi) = \{(\V^G,\D^B)\}$ be the colored data of $(X,\psi)$. Then, the automorphism $\varphi$ extends to an automorphism of $X$ if and only if $\mathbb{F}(X,\psi)$ is preserved by the action of $\varphi$ on $\V^G$ and $\D^B$, i.e.
	\[\forall (\V^G_Y,\D^B_Y) \in \mathbb{F}(X,\psi), (\varphi(\V^G_Y),\varphi(\D^B_Y)) \in \mathbb{F}(X,\psi).\]
\end{theorem}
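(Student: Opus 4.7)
The plan is to reduce the question to a comparison of two $G$-models of $X_0$ via the Luna--Vust classification. Given $\varphi \in A = \Aut^G(X_0)$, one can twist the embedding $\psi$ to produce a second $G$-model $(X, \psi \circ \varphi)$ of $X_0$, defined on the same variety $X$ but with the modified birational identification. The key observation is that a $G$-equivariant extension $\tilde\varphi: X \to X$ of $\varphi$ (that is, one satisfying $\tilde\varphi \circ \psi = \psi \circ \varphi$ on $X_0$) is the same data as a $G$-equivariant isomorphism of $G$-models $(X, \psi) \xrightarrow{\sim} (X, \psi \circ \varphi)$. So the theorem amounts to deciding when these two $G$-models of $X_0$ are isomorphic.

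Next I would compute the colored equipment $\mathbb{F}(X, \psi \circ \varphi)$ in terms of $\mathbb{F}(X, \psi)$. For each $G$-orbit $Y \subset X$, the sets $\V^G_Y$ and $\D^B_Y$ are identified with subsets of $\V^G$ and $\D^B$ by means of the embedding $\psi$, which provides the isomorphism between the function fields $k(X)$ and $K = k(X_0)$. Replacing $\psi$ by $\psi \circ \varphi$ precomposes this identification with $\varphi^*$, and unwinding the definitions of the $A$-actions on $\V^G$ and $\D^B$ given just before the theorem shows that this transport is precisely the action of $\varphi$. Since $\varphi$ is a $G$-equivariant automorphism of $X_0$ it permutes the $G$-orbits of $X$ (via the induced birational self-map), so the data transforms orbit by orbit, and I obtain
\[
\mathbb{F}(X, \psi \circ \varphi) = \varphi \cdot \mathbb{F}(X, \psi).
\]

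Finally I would invoke the Luna--Vust classification recalled in Section \ref{Luna-Vust}: two $G$-models of $X_0$ are $G$-equivariantly isomorphic (as models) if and only if they share the same colored equipment. Applied to the pair $(X, \psi)$ and $(X, \psi \circ \varphi)$, combined with the first paragraph, this yields
\[
\varphi \text{ extends to an automorphism of } X \iff \varphi \cdot \mathbb{F}(X, \psi) = \mathbb{F}(X, \psi),
\]
which is exactly the statement. I expect the main obstacle to be the compatibility check in the middle step: one must track conventions carefully so that the twist $\psi \mapsto \psi \circ \varphi$ induces exactly the $A$-action on $\V^G$ and $\D^B$ defined in the excerpt, and not its inverse, and one must confirm that colors and invariant valuations transform in a parallel way. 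Once this verification is done, the conclusion follows formally from the Luna--Vust bijection and no further combinatorial analysis is needed.
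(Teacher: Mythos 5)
Your argument is correct, but it takes a different route from the paper. You reduce the statement to the uniqueness part of the Luna--Vust classification: an extension $\tilde\varphi$ with $\tilde\varphi\circ\psi=\psi\circ\varphi$ is the same thing as an isomorphism of $G$-models $(X,\psi)\xrightarrow{\sim}(X,\psi\circ\varphi)$, the twisted model has colored equipment $\varphi\cdot\mathbb{F}(X,\psi)$, and two normal $G$-models are isomorphic as models if and only if their colored equipments coincide. The paper instead proves the nontrivial direction by hand: the colored data being $\varphi$-stable, it takes a $B$-chart $\mathring{X}=\Spec(A[\W,\RR])$ meeting a given orbit $Y$, observes via Timashev's combinatorial description of $B$-charts (Proposition \ref{prop}) that $(\varphi^{-1}(\W),\varphi^{-1}(\RR))$ again defines a $B$-chart $\mathring{X}'$, checks directly on valuations that $\varphi$ maps $A[\varphi^{-1}(\W),\varphi^{-1}(\RR)]$ into $A[\W,\RR]$, hence is regular on $\mathring{X}$, and then spreads this out by $G$-equivariance and repeats for $\varphi^{-1}$. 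Your approach is shorter and more conceptual but invokes a larger black box (the injectivity of the model-to-colored-fan correspondence), whereas the paper's argument only uses the $B$-chart bijection and is essentially a re-run of the proof of that injectivity in this special case; the two are therefore the same argument at bottom, packaged differently. The two points you flag as remaining to be checked --- that twisting $\psi$ by $\varphi$ transports the colored data by the $A$-action as defined (and not its inverse), and that colors and invariant valuations transform compatibly --- are genuine but routine, and note that the stated condition $\varphi\cdot\mathbb{F}\subseteq\mathbb{F}$ upgrades to the equality you need because a $G$-model here has only finitely many orbits.
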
 

For the proof, we will need the following result that gives a combinatorial description of the $B$-charts of $X$. We recall that a $B$-chart of a $G$-variety $X$ is a $B$-stable affine open subset of $X$.

\begin{proposition}\cite[Corollary 13.9]{Tim11} \label{prop}
	The map 
	\[ \{\text{$B$-charts of }X\} \to \mathfrak{P}(\V_{X}^{G}) \times \mathfrak{P}(\D_{X}^{B}),\ \mathring{X} \mapsto  
	\bigcup_{\substack{\text{$G$-orbits $Y \subseteq X$},\\ Y \cap \mathring{X} \neq \varnothing}} \left(  \V_{Y}^{G}, \D_{Y}^{B} \right) \]
	is a bijection between the $B$-charts of $X$ and the pairs $(\W,\RR) \in   \mathfrak{P}(\V_{X}^{G}) \times \mathfrak{P}(\D_{X}^{B})$ satisfying conditions $(C)$, $(F)$, and $(W)$ (See \cite[Chapter 13]{Tim11}).
	
	The converse map sends the pair $(\W,\RR)$ to $\Spec(A[\W,\RR])$, where
	\begin{equation*}
		A[\W,\RR]= \left(\bigcap_{w \in \W} \O_w\right) \cap  \left(\bigcap_{D \in \RR} \O_{\nu_D}\right) \cap  \left(\bigcap_{D \in \D \setminus \D^B} \O_{\nu_D}\right) \subseteq  \k(X)
	\end{equation*} 
	and $\D$ is the set of prime divisors on $X$ that are not $G$-stable. 
\end{proposition}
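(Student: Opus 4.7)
The strategy is to construct the two maps separately, verify each is well-defined under the conditions $(C),(F),(W)$, and then show they are mutually inverse. For the forward map, start with a $B$-chart $\mathring{X}\subset X$. Its complement $X\setminus\mathring{X}$ is a union of prime divisors, all of which must be $B$-stable since $\mathring{X}$ is $B$-stable; these split into $G$-stable divisors (contributing their valuations to $\V^G$) and colors. One then checks that the associated pair coincides with $\bigcup_{Y\cap\mathring{X}\neq\varnothing}(\V^G_Y,\D^B_Y)$ by identifying the $G$-orbits meeting $\mathring{X}$ with those whose closures are not entirely contained in the boundary. The conditions $(C),(F),(W)$ translate the geometric facts that the chart is open (convexity of the omitted valuation data), of finite type (finiteness), and affine (a generation condition on the monoid of $B$-semi-invariants regular on $\mathring{X}$).

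For the backward map, given $(\W,\RR)$ satisfying the conditions, define $A[\W,\RR]$ by the prescribed intersection of discrete valuation rings inside $\k(X)$. The key points to establish are that $A[\W,\RR]$ is a finitely generated $\k$-algebra, that $\Spec(A[\W,\RR])$ embeds as a $B$-stable affine open of $X$, and that its associated combinatorial pair is $(\W,\RR)$. The main tool is the $B$-semi-invariant decomposition of $\k(X)^{(B)}$: for a $B$-semi-invariant $f$, the valuation conditions $\nu_w(f)\geq 0$ and $\mathrm{ord}_D(f)\geq 0$ translate into linear inequalities on its $B$-weight together with its support along the prime divisors. Condition $(W)$ ensures that the monoid of admissible weights is finitely generated (a Gordan-type lemma), $(F)$ that the defining data is finite, and $(C)$ that the resulting affine scheme geometrically corresponds to the complement in $X$ of exactly the $B$-stable divisors prescribed by $\W$ and by $\D^B \setminus \RR$. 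Openness of the embedding follows by comparing local rings at the generic points of prime divisors on both sides.

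That the two maps are inverse follows from the observation that a $B$-chart is completely determined by the set of $B$-stable prime divisors it omits, equivalently by the set of $B$-semi-invariants of $\k(X)$ regular on it: both directions merely record which $G$-invariant valuations and which colors must be nonnegative, so the composition recovers the starting data. The main obstacle is the finite generation of $A[\W,\RR]$ and the realization of $\Spec(A[\W,\RR])$ as an honest open subscheme of $X$ rather than just a birationally equivalent variety; this is where conditions $(C),(F),(W)$ enter essentially, via the Gordan-type argument for the weight monoid and a gluing argument based on the canonical $B$-chart of the open orbit $X_0=G/H$.
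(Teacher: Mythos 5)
The paper does not actually prove this statement: it is imported wholesale as \cite[Corollary 13.9]{Tim11} and used as a black box in the proof of Theorem~\ref{prop : Aut}, so the only thing to compare your argument with is Timashev's Chapter 13 (going back to \cite[\S 8]{LV83}). Your outline reproduces the correct global shape of that proof --- forward map by recording colored data, inverse map $(\W,\RR)\mapsto\Spec(A[\W,\RR])$, mutual inverseness via the fact that a $B$-chart of a normal $G$-variety is the complement of the $B$-stable prime divisors it misses --- but it stays at the level of a plan exactly where the content is: finite generation of $A[\W,\RR]$, openness of $\Spec(A[\W,\RR])\hookrightarrow X$, and the verification that the resulting chart has colored data $(\W,\RR)$ are all announced rather than argued.

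Two of the deferred points are genuine gaps. First, deriving finite generation of $A[\W,\RR]$ from ``a Gordan-type lemma for the monoid of admissible weights'' is the complexity-zero argument: it suffices when the $B$-semi-invariants in $\k(X)$ of a fixed weight form a one-dimensional space over $\k$. The varieties of this paper have complexity one, so that space is one-dimensional only over the field $\k(X)^B$ of a curve, and finite generation additionally requires a finiteness statement for the relevant spaces of sections over that curve; this is where condition $(W)$ actually enters in \cite{Tim11}, so glossing it as ``a generation condition on the monoid'' misplaces it. Second, ``the two maps are inverse'' is not a mere observation. The datum attached to $\mathring{X}$ in the statement has $\RR$ equal to the set of colors \emph{containing a $G$-orbit that meets} $\mathring{X}$, whereas your reconstruction of $\mathring{X}$ uses the set of colors \emph{meeting} $\mathring{X}$; these need not visibly coincide. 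Indeed, with the paper's own definition of a $B$-chart, $X_0=\SL_2/H$ is a $B$-stable affine open subset, every color meets it, yet no color contains an orbit meeting it, so the formula would assign it $\RR=\varnothing$ while $A[\varnothing,\varnothing]$ permits poles along all colors. Showing that for an honest $B$-chart the two descriptions agree --- equivalently, that the forward map lands in, and surjects onto, the pairs satisfying $(C)$, $(F)$, $(W)$ --- is precisely the part of Timashev's proof that your sketch skips.
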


\begin{proof}[Proof of the theorem]
	Let's suppose that $\varphi$ extends to an automorphism of $X$. Let $Y$ be a $G$-orbit of $X$, then $\varphi(Y)$ is also a $G$-orbit of $X$. We have $\varphi(\D^B_Y) = \D^B_{\varphi(Y)}$, and $\varphi(\V^G_Y) = \V^G_{\varphi(Y)}$.
	Therefore, $(\varphi(\V^G_Y),\varphi(\D^B_Y)) \in \mathbb{F}(X,\psi)$ and the colored data is preserved.
	
	Conversely, assume that the colored data $\mathbb{F}(X,\psi)$ is preserved by the action of $\varphi$. Let $Y$ be a $G$-orbit of $X$, and let $\mathring{X}$ be a $B$-chart of $X$ such that $\mathring{X} \cap Y \neq \varnothing$. By Proposition $\ref{prop}$, there exists a unique pair $(\W,\RR)$ such that $\Spec(A[\W,\RR])= \mathring{X}$. Now, the pair $(\varphi^{-1}(\W),\varphi^{-1}(\RR))$ also satisfies conditions (C), (F) and (W). Since $\mathbb{F}(X,\psi)$ is stable under $\varphi$, this means that $\mathring{X}' = \Spec(A[\varphi^{-1}(\W),\varphi^{-1}(\RR)])$ is a $B$-chart of X. 
	
	Now, $\varphi$ sends $A[\varphi^{-1}(\W),\varphi^{-1}(\RR)]$ to $A[\W,\RR]$ so $\varphi$ is a regular map $\mathring{X} \rightarrow \mathring{X}'$. Indeed, let $f \in A[\varphi^{-1}(\W),\varphi^{-1}(\RR)]$ we have :
	\[ \forall \omega \in \W, \omega(\varphi(f)) = (\varphi^{-1}(\omega))(f) \geq 0 \text{ and} \]
	\[\forall D \in \RR, \nu_D(\varphi(f)) = \nu_{\varphi^{-1}(D)}(f) \geq 0.\]
	Moreover, $\D \backslash \D^B$ is preserved by $A$ so $\varphi(f) \in A[\W,\RR]$ by Proposition \ref{prop}.
	
	Thus, the open locus $U_{\varphi} \subset X$ over which $\varphi$ is defined contains $\mathring{X}$. Since, $\varphi$ is $G$-equivariant, $U_{\varphi}$ actually contains $G \cdot \mathring{X}$, in particular it contains $Y$. This is true for all $G$-orbit $Y$ of $X$ so $\varphi$ is a regular morphism on $X$. Moreover, since $\varphi^{-1} \in A$ and $\varphi^{-1} \cdot \mathbb{F}(X,\psi) = \varphi^{-1} \cdot ( \varphi \cdot \mathbb{F}(X,\psi)) = \mathbb{F}(X,\psi)$, $\varphi^{-1}$ is also a regular morphism on $X$. Therefore, $\varphi$ is an automorphism of $X$.
\end{proof}
We now let $k = \C$, $G = \SL_2$, $H$ be a finite cyclic subgroup of $G$ and $X$ be a minimal smooth completion of $G/H$. The diagrams corresponding to those varieties can be found in the \hyperref[appendix: A]{Appendix} and the groups $\Aut^G(X_0)$ have been computed in \cite[Lemma 4.3]{MJT20} for all $H$. We do not give the equivariant automorphism group in cases (b1) and (b2), because the result depends heavily on the different parameters and is not required for the rest of this paper.

\begin{corollary}\label{prop : Aut 2}
\begin{itemize} \item []
        \item  If $X$ corresponds to model (c1), (g3) or (i4) with $n = m$, we have \\$\Aut^G(X) = \begin{bmatrix} * & 0 \\ 0 & * \end{bmatrix}/H$.
        \item If $X$ corresponds to model (c1), (g3) or (i4) with $n \neq m$, we have\\ $\Aut^G(X) = \left(\begin{bmatrix} * & 0 \\ 0 & * \end{bmatrix} \cup \begin{bmatrix} 0 & * \\ * & 0 \end{bmatrix}\right)/H$.
        \item For the other varieties, the equivariant automorphism group are given in the following table.
        \end{itemize}
\end{corollary}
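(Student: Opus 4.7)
The plan is to apply Theorem \ref{prop : Aut} directly to each of the minimal smooth completions listed in the Appendix, and to identify, model by model, the subgroup of $A := \Aut^G(G/H) \cong N_G(H)/H$ whose induced action on the colored equipment preserves the collection of colored data.

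I would begin with two preparatory steps that apply to every model. First, I compute $A$ explicitly: for any finite cyclic subgroup $H = A_k \subset \SL_2$, a direct matrix computation gives $N_G(H) = T \sqcup eT$, where $T$ is the standard diagonal maximal torus and $e = \begin{bmatrix} 0 & 1 \\ -1 & 0 \end{bmatrix}$ represents the nontrivial Weyl group element. Hence $A$ has exactly two connected components $T/H$ and $eT/H$, which are precisely the two cosets appearing in the corollary. Second, using the definitions of the induced actions on $\D^B \simeq \p^1/H$ and on $\V^G$ given at the beginning of Section \ref{section: group}, I describe how $A$ acts on the skeleton diagram of $G/H$: the torus $T/H$ fixes the two poles $j_+ := [0{:}1]H$ and $j_- := [1{:}0]H$, permutes the remaining colors by scaling, and preserves each spoke of the diagram individually; the coset representative $e$ swaps $j_+$ and $j_-$ and interchanges the two corresponding spokes.

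With these tools at hand, the core of the proof is a case-by-case inspection. For each model $X$ one reads off the colored data $\mathbb{F}(X,\psi)$ from the Appendix and determines the largest subgroup of $A$ that stabilizes it. For the models (c1), (g3), (i4), the colored data is supported on the two $T$-fixed spokes $j_+$ and $j_-$, so $T/H$ automatically preserves $\mathbb{F}(X,\psi)$; whether $e$ also extends reduces to a matching condition between the data attached to $j_+$ and that attached to $j_-$ under the swap induced by $e$, and comparing this matching criterion against the combinatorial parameters $(n,m)$ of the three models yields exactly the two bulleted cases of the corollary. The remaining minimal smooth completions are treated analogously to produce the entries of the table: in particular, models whose colored data involves colors outside $\{j_+, j_-\}$ may cut $T/H$ down to a finite subgroup (since the torus then permutes non-trivially the relevant color set), and models whose data is fully asymmetric across the two poles exclude $e$ altogether.

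The main obstacle is bookkeeping rather than conceptual depth: Theorem \ref{prop : Aut} reduces everything to a mechanical check on the diagrams in the Appendix, but the number of distinct models and of parameters $(k,n,m,a,b,r)$ to track makes the enumeration tedious. The only genuine subtlety is the translation between the classical geometric presentations of these models (Umemura $\p^1$-bundles, Schwarzenberger $\p^1$-bundles, decomposable $\p^1$- and $\p^2$-bundles, and the isolated Fano cases) and their Luna--Vust combinatorial invariants, which has however already been carried out in \cite{MJ87} for $H \subset \{\pm I\}$ and in \cite{Bou00} in full generality, so that the case-by-case verification is reduced to inspection of the diagrams reproduced in the Appendix.
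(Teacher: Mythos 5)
Your overall strategy --- apply Theorem \ref{prop : Aut} and check, model by model on the diagrams of the Appendix, which elements of $A = N_G(H)/H$ preserve the colored data --- is exactly the paper's approach. But your first preparatory step contains an error that invalidates the analysis for a large part of the table. You assert that for \emph{any} finite cyclic $H = A_k$ one has $N_G(H) = T \sqcup eT$, so that $A$ always consists of the two cosets $T/H$ and $eT/H$. This is only true for $k \geq 3$. For $k = 1$ ($H$ trivial) and $k = 2$ ($H = \{\pm I\}$ central) one has $N_G(H) = \SL_2$, so $A$ is all of $\SL_2$ (resp.\ $\PGL_2$); this is recorded in \cite[Lemma 4.3]{MJT20}, which the paper uses. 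Consequently your framework cannot produce the entries $\SL_2$ for (a1), $\PGL_2$ for (a2), the Borel subgroup $\begin{bmatrix} * & * \\ 0 & * \end{bmatrix}/H$ for (d1), (f1), (c2), or $\mathfrak{S}_3$ for (g2): none of these are subgroups of $(T \sqcup eT)/H$. The point you miss is that when only the single color $[0:1]$ must be fixed (models (d1), (f1), (c2)), the stabilizer inside the full $A$ is a Borel subgroup, not a torus, and when three colors $[0:1],[1:0],[1:1]$ must be permuted among themselves (model (g2)), $3$-transitivity of $\PGL_2$ on $\p^1$ gives $\mathfrak{S}_3$. The paper handles this by letting $n = \begin{bmatrix} a & b \\ c & d \end{bmatrix}$ range over all of $N_G(H)$ and computing $\varphi_n \cdot [0:1]H = [-c:a]H$, $\varphi_n \cdot [1:0]H = [d:-b]H$, so that fixing $[0:1]$ forces only $c = 0$.

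For the models with $k \geq 3$ (Figures 3 and 4) your description of $A$ and of its action on the skeleton diagram is correct, and the reduction of the (c1)/(g3)/(i4) dichotomy to a matching condition between the data at the two poles, as well as the treatment of the extra color $[1:\alpha]$ in (g4) and (g4'), matches the paper. To repair the proposal you need to replace the uniform claim about $N_G(H)$ by the correct computation of $N_G(H)/H$ in each of the three regimes $k=1$, $k=2$, $k \geq 3$, and redo the stabilizer computation inside the full group for the $k \leq 2$ models.
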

\[\tabulinesep=0.8mm
\begin{tabu}{|c|c|}
	\hline 
	\text{Models} & \Aut^G(X)  \\ \hline 
	\text{(a1)}  & \SL_2 \\
	\hline 
	\text{(a2)} & \PGL_2 \\
	\hline
	\text{(d1),(f1),(c2)} &
	\begin{bmatrix}
	    * & * \\ 0 & *
	\end{bmatrix}/H \\
    \hline
    \text{(e1),(h4)} & \left(\begin{bmatrix} * & 0 \\ 0 & * \end{bmatrix} \cup \begin{bmatrix} 0 & * \\ * & 0 \end{bmatrix}\right)/H \\
	\hline
    \begin{array}{c}
	\text{(g1),...,(k1),(d2),(e2),(f2)} \\
    \text{(a3),...,(f3),(a4),...,(f4)} \end{array} & \begin{bmatrix}
	    * & 0 \\ 0 & *
	\end{bmatrix}/H\\
	\hline
    \text{(g2)}  & \mathfrak{S}_3 \\
	\hline
    \text{(g4)}  & \Z/2\Z \\
	\hline
    \text{(g4')}  & \{1\} \\
	\hline
\end{tabu}\]

\begin{proof}
    We use Theorem \ref{prop : Aut}. Let $n = \begin{bmatrix} a & b \\ c & d \end{bmatrix} \in N_G(H)$. 
    For cases (a1) and (a2), the diagram is preserved by the action of $\varphi_n$ for all $n$ so we have $\Aut^G(X_0) = \Aut^G(X)$, hence the result.
    
    We have $\varphi_n \cdot [0:1]H = [0:1]n^{-1}H = [-c:a]H$ and $
        \varphi_n \cdot [1:0]H = [d:-b]H$.
    Therefore, $[0:1]$ (resp. $[1:0]$) is fixed by the action of $\varphi_n$ if and only if $c = 0$ (resp. $b=0$), and $[0:1]$ and $[1:0]$ are permuted if and only if $a = d = 0$. This gives us the result for all models except (g2), (g4) and (g4').

    For model (g2), the equivariant automorphism group has been computed in \cite[Example 4.11]{MJT20} so we are left with models (g4) and (g4'). We can now suppose that $k \geq 4$ and is even and so we have $N_G(H)/H =\left(\begin{bmatrix} * & 0 \\ 0 & * \end{bmatrix} \cup \begin{bmatrix} 0 & * \\ * & 0 \end{bmatrix}\right)/H$. Let $\alpha \in \C^*$, $\varphi_n \in \Aut^G(X)$ and suppose that $nH = \begin{bmatrix}
        a & 0 \\ 0 & a^{-1}
    \end{bmatrix}H$. For (g4'), $[0:1]$ and $[1:0]$ both have to be fixed so $n$ must be of this form. Moreover, we have $\varphi_n \cdot [1:\alpha]H = [1:a^2\alpha]H$ so $[1:\alpha]$ is fixed if and only if $a = \pm 1$, i.e. if and only if $nH = I_2H$.
    
    Now suppose that $nH = \begin{bmatrix}
        0 & b \\ -b^{-1} & 0
    \end{bmatrix}H$. Then $\varphi_n \cdot [1:\alpha] = [1: -b^2 \alpha^{-1}]$ so $[1:\alpha]$ is fixed by the action of $\varphi_n$ if and only if $b = \pm i \alpha$, i.e. $nH = \begin{bmatrix}
        0 & i\alpha \\ i \alpha^{-1} & 0
    \end{bmatrix}H$.
\end{proof}

\section{Determination of real forms} \label{section: forms}

Let $G =\SL_2$, $H$ a finite cyclic subgroup of $G$ of order $k$, $X_0 = G/H$, $\Gamma=\Gal(\C/\R) = \{1, \gamma\}$, and $\sigma \in \{\sigma_s, \sigma_c\}$. $H$ is conjugate to the cyclic subgroup $A_k$ of $\SL_2$ generated by $\omega_k = \begin{bmatrix}
	\zeta_k & 0 \\ 0 & \zeta_k^{-1}
\end{bmatrix}$ where $\zeta_k$ is a primitive $k$-th root of unity. For the rest of this section, we will suppose that $H = A_k$. We also note $e = \begin{bmatrix}
    0 & 1 \\ -1 & 0
\end{bmatrix} \in \SL_2(\C)$ et $f = \begin{bmatrix}
    0 & i \\ i & 0
\end{bmatrix} \in \SL_2(\C)$.

In this section, we will classify the equivariant real structures of the minimal smooth completions of $X_0$ and their equivalence classes. We recall that the diagrams of those models are in the \hyperref[appendix: A]{Appendix}. We start by the case where $H$ is trivial.

\begin{proposition} \label{prop 4.1}
	Let $X$ be a minimal smooth completion of $G = \SL_2$, and let $\sigma \in \{ \sigma_s, \sigma_c\}$. Then the following table gives, in each case, a representant $\mu$ for each equivalence class of effective $(G,\sigma)$-equivariant real structures on $X$. 
\end{proposition}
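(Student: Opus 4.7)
The strategy is to combine Proposition \ref{prop:Z} (which parametrizes the real structures on $X_0$), Theorem \ref{th:E} (which selects those that extend to $X$), and Corollary \ref{prop : Aut 2} (which computes the group $\Aut^G(X)$ acting on the set of extensions by conjugation).

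Since $H=\{e\}$ is trivial, $\Aut^G_\C(X_0) = N_G(\{e\})/\{e\} \cong G$. Taking as basepoint the real structure $\mu_0 := \sigma$, Proposition \ref{prop:Z} says that every $(G,\sigma)$-equivariant real structure on $X_0 = G$ is of the form $\mu_t(g) = \sigma(g)\,t$ for a unique $t \in G$. The involution condition $\mu_t^2 = \operatorname{id}$ is equivalent to $\sigma(t)\,t = I$, i.e.\ $\sigma(t)=t^{-1}$. For $\sigma=\sigma_s$ this means $t \in \SU(2)$, and for $\sigma=\sigma_c$ it means that $t \in \SL_2(\C)$ is Hermitian; in particular the set of candidates is already reasonably small.

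Next, for each minimal smooth completion $X$ of $G$ in the list, I would apply Theorem \ref{th:E} to determine which $\mu_t$ extend to an effective real structure on $X$. Using the formulas of \S\ref{Luna-Vust}, the induced $\Gamma$-action on $\D^B(G) \simeq \p^1$ is
\[
    [\alpha:\beta] \longmapsto \begin{cases} [\bar{\alpha}:\bar{\beta}]\cdot t & \text{if }\sigma=\sigma_s, \\ [-\bar{\beta}:\bar{\alpha}]\cdot t & \text{if }\sigma=\sigma_c, \end{cases}
\]
and the action on $\V^G$ is inherited from this involution of $\p^1$. Reading off the colored data of each model from its skeleton diagram in the \hyperref[appendix: A]{Appendix}, I then impose two conditions: that the induced involution of $\p^1$ permute the configuration of facets labelled $\mathcal{C}$, $\mathcal{A}_N$, $\mathcal{AB}$, $\mathcal{B}_{\pm}$, $\mathcal{B}_0$, and that it fix every orbit of type $\mathcal{B}_0$ or $\mathcal{B}_-$.

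Finally, two admissible $t, t'$ give equivalent real structures on $X$ iff they are conjugate under the action of $\Aut^G(X)$ on the cocycle set; the equivariant automorphism group is provided, case by case, by Corollary \ref{prop : Aut 2}. Selecting a canonical representative in each conjugacy class fills the table. The main obstacle is the case-by-case diagram analysis. Condition (2) of Theorem \ref{th:E} is the subtlest part, since it can exclude a $t$ that respects the combinatorial type of the orbits but swaps two $\mathcal{B}_-$ or $\mathcal{B}_0$ orbits; moreover, for models whose diagrams have little symmetry, $\Aut^G(X)$ is only the diagonal torus (modulo $H$), so the conjugation orbits on the space of Hermitian/unitary $t$'s are larger than for the highly symmetric models like $Q_3$, and one must be careful to choose representatives whose form meshes well with the parametrizations used in later sections for the rationality and real-locus computations.
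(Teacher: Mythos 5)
Your overall strategy is exactly the paper's: parametrize the candidate structures on $X_0=G$ by $Z^1(\Gamma,\Aut^G(X_0))$ via Proposition \ref{prop:Z}, select the ones that extend effectively to each model via Theorem \ref{th:E} by reading the $\Gamma$-action on $\p^1$ off the skeleton diagrams, and then quotient by the twisted conjugation action of $\Aut^G(X)$ computed in Corollary \ref{prop : Aut 2}. The remarks about condition (2) of Theorem \ref{th:E} and about the small automorphism groups of the low-symmetry models are also on target.

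There is, however, one concrete error in Step 1. The cocycle condition $\sigma(t)=t^{-1}$ for $\sigma=\sigma_s$ reads $\bar t\,t=I$ (entrywise conjugation, no transpose), which is \emph{not} the unitarity condition ${}^{t}\bar t\,t=I$; the solution set is
\[
\left\{\begin{bmatrix} a & ix \\ iy & \bar a\end{bmatrix}\ :\ x,y\in\R,\ |a|^2+xy=1\right\},
\]
not $\SU(2)$. For instance $t=\begin{bmatrix}1 & i\\ 0 & 1\end{bmatrix}$ is a cocycle but is not unitary. This is not a harmless relabelling: for models (d1) and (f1) the cocycles that pass the extension test of Theorem \ref{th:E} are precisely the upper-triangular matrices $\begin{bmatrix} a & ix\\ 0 & a^{-1}\end{bmatrix}$ with $|a|=1$, $x\in\R$, most of which lie outside $\SU(2)$. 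In the present situation every equivalence class happens to admit a representative that is both a cocycle and unitary (all structures on (d1), (f1) are conjugate to $\mu_{\mathrm{Id}}$ under the Borel modulo $H$), so the final table survives, but starting from the wrong candidate set is the kind of slip that would silently lose classes in other cases. Your characterization for $\sigma_c$ (Hermitian $t$ of determinant $1$) is correct and matches the paper. Beyond this, the proposal is a plan rather than a proof: the case-by-case diagram analysis and the explicit conjugation computations (e.g.\ distinguishing $\mu_{\mathrm{Id}}$ from $\mu_f$ on model (e1), and the congruence condition on the $x_i$ for model (b1)) still have to be carried out to produce the table.
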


\[\tabulinesep=0.8mm
\begin{tabu}{|c|c|c|}
	\hline 
	\text{Models} & \mu \text{ for } \sigma=\sigma_s & \mu \text{ for } \sigma=\sigma_c \\ \hline 
	\text{(a1)}  & g \mapsto \sigma_s(g) & \begin{array}{c}
		g \mapsto \sigma_c(g) \\ g \mapsto -\sigma_c(g)
	\end{array}\\
	\hline
	\text{(c1),(d1),(f1)(g1),(h1),(i1),(j1),(k1)} & g \mapsto \sigma_s(g) & \varnothing \\
	\hline
		\text{(e1)} & \begin{array}{c}
			g \mapsto \sigma_s(g) \\ g \mapsto \sigma_s(g)f
		\end{array}  & \begin{array}{c}
		g \mapsto \sigma_c(g) \\ g \mapsto -\sigma_c(g)
	\end{array}\\
	\hline
		\begin{array}{c}
	\text{(b1) and all the $x_i$} \\ \text{are congruent modulo $\pi$}
	\end{array} &
	g \mapsto \pm \sigma_s(g)\begin{bmatrix}
		e^{-i\pi x} & 0 \\ 0 & e^{i\pi x}
	\end{bmatrix} &
	\varnothing\\
	\hline
	\begin{array}{c}
		\text{(b1) and not all the $x_i$} \\ \text{are congruent modulo $\pi$}
	\end{array} &
	\varnothing &
	\varnothing\\
	\hline
\end{tabu} \]

\begin{proof}
\textit{- Step 1:} We start by looking for all the $(G, \sigma)$-equivariant real structures on $X_0$. For this, we use Proposition \ref{prop:Z} and so we just have to compute $Z^1(\Gamma, N_G(H)/H)$. In this case, $N_G(H)/H = \SL_2$ (\cite[Lemma 4.3]{MJT20}) and since $\Gamma \simeq \Z/2\Z$ we have
\[Z^1(\Gamma,G) = \{ g \in G \ | \   g^{-1}= \gamma \cdot g \}.\]
Let $g =\begin{bmatrix}
	a & b \\ c & d
\end{bmatrix} \in \SL_2$. We start by the case $\sigma=\sigma_s$. Then, $\gamma \cdot g = \begin{bmatrix}
	\bar{a} & \bar{b} \\ \bar{c} & \bar{d}
\end{bmatrix}$, where $\bar{x}$ denotes the complex conjugation of $x$ for $x \in \C$, and $g^{-1} = \begin{bmatrix}
d & -b \\ -c & a
\end{bmatrix}$ so we get
\begin{align*}
	\gamma \cdot g = g^{-1} & \Leftrightarrow \bar{a}=d,\ \bar{d}=a,\ c= -\bar{c},\ b= -\bar{b}\\	
	& \Leftrightarrow \bar{a}=d \ \et \ b,c \in i\R.				
\end{align*}
Therefore, 
\[Z^1(\Gamma,G) = \left\{\begin{bmatrix}
		a & ix \\ iy & \bar{a}
	\end{bmatrix}; \ x,y \in \R, \  |a|^2 + xy = 1\right\}.\]
For $\sigma = \sigma_c$, $\gamma \cdot g = {}^{t}\sigma_s(g)^{-1} = \begin{bmatrix}
	\bar{d} & -\bar{c} \\ -\bar{b} & \bar{a}
\end{bmatrix}$ so						
\begin{align*}
	\gamma \cdot g = g^{-1} & \Leftrightarrow  \bar{d}=d,\ \bar{a}=a,\ b=\bar{c},\ c=\bar{b}\\
	& \Leftrightarrow c=\bar{b} \ \et \ a,d \in \R.							
\end{align*}
which gives
\[Z^1(\Gamma,G) = \left\{\begin{bmatrix}
		a & b \\ \bar{b} & d
	\end{bmatrix}; \ a,d \in \R, \  ad - |b|^2 = 1\right\}\]

\textit{- Step 2:} We will now see in each case, using Theorem \ref{th:E} which one of those equivariant real structures on $X_0$ extend to an equivariant real structure on $X$.

First, for model (a1), the result is immediate in both cases because the diagram is necessarily preserved by the $\Gamma$-action on it. Let us also notice that except for model (b1), we only need to look at whether the action of $\Gamma$ fixes or permutes $[0:1]$ and $[1:0]$.
	
	$\bullet$ Let us first look at the case where $\sigma = \sigma_s$. Let $t = \begin{bmatrix}
		a & ix \\ iy & \bar{a}
	\end{bmatrix} \in Z^1(\Gamma,G)$.
\begin{center}
	$\gamma \cdot [0:1]=[0:1]t = [iy:\bar{a}]$ and $\gamma\cdot [1:0]=[1:0]t = [a:ix]$.
\end{center} So,
\begin{align*}
	& \gamma \cdot [0:1]= [0:1] \Leftrightarrow y=0 \ \et \ a \neq 0 \\
	\et \ & \gamma\cdot [1:0]=[1:0] \Leftrightarrow x=0 \ \et \ a \neq 0.
\end{align*}
Therefore, the action defined by $\mu_t$ fixes $[0:1]$ and $[1:0]$ if and only if $t = \begin{bmatrix}
	a & 0\\ 0 & a^{-1}
\end{bmatrix}$ with $|a| = 1$.

Furthermore,
\begin{align*}
	& \gamma \cdot [0:1]= [1:0] \Leftrightarrow y \neq 0 \ \et \ a=0 \\
	\et \ & \gamma\cdot [1:0]=[0:1] \Leftrightarrow x \neq 0 \ \et \ a=0.
\end{align*}
So $\gamma$ permutes $[1:0]$ and $[0:1]$ if and only if $t = \begin{bmatrix}
	0 & ix \\ ix^{-1} & 0
\end{bmatrix}$ with $x \in \R^*$.

We can now conclude for all models other than model (b1). Indeed, for models (d1) and (f1), $\gamma$ has to fix $[0:1]$ for $\mu_t$ to extend effectively to $X$. For models (c1), (g1), (h1), (i1), (j1) and (k1), $\gamma$ has to fix both $[0:1]$ and $[1:0]$. And for model (e1), $\gamma$ can either permute $[0:1]$ and $[1:0$] or fix them.

Now let $m \geq 3$ and consider model (b1). We note $(\w_1,...,\w_n)$ the elements of $\p^1$ corresponding to the orbits of type $\mathcal{B}_-$, with $\w_1 = [0:1]$, $\w_n = [1:0]$ and $\w_i = [1:\alpha_i]$ for $i \in \{2,...,m-1\}$. We also note $x_i = \text{arg}(\alpha_i)$. For an equivariant real structure on $X_0$ to extend to an effective equivariant real structure on $X$, all of the $\w_i$ have to be fixed by the corresponding action. For $\w_1$ and $\w_n$ to be fixed, we already know that $t$ must be of the form $\begin{bmatrix}
	a & 0 \\ 0 & a^{-1}
\end{bmatrix}$. For such a $t$, $\gamma \cdot \w_i = [1 : a^{-2} \overline{\alpha_i}]$ so $\gamma \cdot \w_i = \w_i$ if and only if $a = \pm e^{-i\pi x_i}$. Therefore we have a $(G,\sigma_s)$-equivariant real structure on $X$ if and only if all of the $x_i$ are congruent modulo $\pi$, in which case we actually have two real structures corresponding to $t = \pm \begin{bmatrix}
e^{-i\pi x} & 0 \\ 0 & e^{i\pi x}
\end{bmatrix}$.

Let us now look at the case $\sigma = \sigma_c$. Let $t = \begin{bmatrix}
	a & b \\ \bar{b} & d
\end{bmatrix} \in Z^1(\Gamma,G)$. The conditions for $\mu_t$ to extend effectively to $X$ are the same as in the case $\sigma = \sigma_s$, but the action is now different :
\begin{align*}
	& \gamma \cdot [0:1] = [-1:0]t = [-a:-b] = [a:b]\\
	\et \ & \gamma \cdot [1:0] = [0:1]t = [\bar{b}:d].
\end{align*}
So $\gamma \cdot [0:1] = [0:1] \Rightarrow a = 0$, which is impossible since $t \in \SL_2$. This shows that $[0:1]$ cannot be fixed by $\gamma$. Notice that this is enough to conclude for model (b1) too.
Then, we also have that 
\begin{align*}
	& \gamma \cdot [0:1]= [1:0] \Leftrightarrow a \neq 0 \ \et \ b=0 \\
	\et \ & \gamma\cdot [1:0]=[0:1] \Leftrightarrow d \neq 0 \ \et \ b=0.
\end{align*}
So the action defined by $\mu_t$ permutes $[0:1]$ and $[1:0]$ if and only if $t = \begin{bmatrix}
	a & 0 \\ 0 & a^{-1}
\end{bmatrix}$ with $a \in \R^*$.

To recap, the following table gives in each case for which $t \in Z^1(\Gamma,G)$ the corresponding equivariant real structure $\mu_t : g \mapsto \sigma(g)t$ on $X_0 = G$ extends to an effective ($G,\sigma$)-equivariant real structure on $X$:

\vspace{3mm}
\scalebox{0.7}{$\tabulinesep=0.8mm
\hspace{-8mm} \begin{tabu}{|c|c|c|}
		\hline 
		\text{Models} & \sigma=\sigma_s & \sigma=\sigma_c\\ \hline 
		\text{(a1)} & \left\{\begin{bmatrix}
			a & ix \\ iy & \bar{a}
		\end{bmatrix}; \ x,y \in \R, \  |a|^2 + xy = 1\right\} & \left\{\begin{bmatrix}
			a & b \\ \bar{b} & d
		\end{bmatrix}; \ a,d \in \R, \  ad - |b|^2 = 1\right\}\\
		\hline
		\text{(e1)}   & \left\{\begin{bmatrix}
			a & 0 \\ 0 & a^{-1}
		\end{bmatrix}; \ |a| = 1\right\} \sqcup \left\{\begin{bmatrix}
		0 & ix \\ ix^{-1}  & 0
	\end{bmatrix}; \ x \in \R \right\} & \left\{\begin{bmatrix}
	a & 0 \\ 0 & a^{-1}
	\end{bmatrix}; \ |a| = 1\right\} \\
		\hline
		\text{(d1) and (f1)}  & \left\{\begin{bmatrix}
			a & ix \\ 0 & a^{-1}
		\end{bmatrix}; \ x \in \R, \ |a| = 1\right\} &  \varnothing \\
		\hline
		\text{(c1), (g1), (h1), (i1), (j1) and (k1)}  & \left\{\begin{bmatrix}
			a & 0 \\ 0 & a^{-1}
		\end{bmatrix}; \ |a| = 1\right\} &  \varnothing \\
		\hline
		\begin{array}{c}
	\text{(b1) and all the $x_i$} \\ \text{are congruent modulo $\pi$}
	\end{array} &
	\pm \begin{bmatrix}
		e^{-i\pi x} & 0 \\ 0 & e^{i\pi x}
	\end{bmatrix} &
	\varnothing\\
	\hline
	\begin{array}{c}
		\text{(b1) and all the $x_i$} \\ \text{are not congruent modulo $\pi$}
	\end{array} &
	\varnothing &
	\varnothing\\
		\hline	
	\end{tabu}$}\\

\vspace{3mm}
\textit{- Step 3:} 	Finally, we determine the equivalence classes of the effective equivariant real structures for each model using the results of Corollary \ref{prop : Aut 2}.

For model (a1), $\Aut_{\C}^G(X) = \Aut_{\C}^G(X_0)$, so we have the same results as in the homogeneous case.
	
Suppose that $X$ corresponds to diagrams (c1),(g1),(h1),(i1),(j1) or (k1). Then we have no $(G,\sigma_c)$-equivariant real structures, so we suppose that $\sigma = \sigma_s$. Let $t = \begin{bmatrix}
		s & 0 \\ 0 & s^{-1}
	\end{bmatrix}$ with $|s| = 1$ and $n =\begin{bmatrix}
	x & 0 \\ 0 & x^{-1}
\end{bmatrix}$ with $x$ such that $x^{-2} = s$. By Corollary \ref{prop : Aut 2}, $\varphi_n \in \Aut_{\C}^G(X)$. For all $g \in G$, we have $\varphi_n \circ \mu_{Id} \circ \varphi_n^{-1}(g) =  \sigma_s(g)t = \mu_t(g)$ so $\mu_{Id}$ is equivalent to $\mu_t$. Therefore, all $(G,\sigma_s)$-equivariant real structures are therefore equivalent to $\mu_{Id}$.

Suppose now that $X$ corresponds to (d1) or (f1). We have no equivariant real structures for $\sigma = \sigma_c$. Let $\sigma = \sigma_s$, $t = \begin{bmatrix}
	s & ix \\ 0 & s^{-1}
\end{bmatrix}$ with $|s| = 1$ and $x \in \R$. Let $\alpha,\beta \in \C$ such that $\alpha^{-2} = s$ and $\text{Im}(\alpha \bar{\beta}) = \frac{x}{2}$, and let $n = \begin{bmatrix}
\alpha & \beta \\ 0 & \alpha^{-1}
\end{bmatrix} \in \Aut_{\C}^G(X)$. Then we have $\varphi_n \circ \mu_{Id} \circ \varphi_n^{-1} = \mu_t$ so all $(G,\sigma_s)$-equivariant real structures on $X$ are equivalent to the one given by the identity matrix.

For model (e1), we recall that $\Aut_{\C}^G(X) = \begin{bmatrix}
	* & 0 \\ 0 & *
\end{bmatrix} \sqcup \begin{bmatrix}
0 & * \\ * & 0
\end{bmatrix}$. Let $g = \begin{bmatrix}
a & b \\ c & d
\end{bmatrix} \in G$ and let $\sigma = \sigma_c$. For $n = \begin{bmatrix}
x & 0 \\ 0 & x^{-1}
\end{bmatrix} \in \Aut_{\C}^G(X)$ we have 
\begin{align*}
	\varphi_n \circ \mu_{Id} \circ \varphi_n^{-1}(g) & = \begin{bmatrix}
		\overline{ax}x^{-1} & \overline{bx^{-1}}x \\ \overline{cx}x^{-1} & \overline{dx^{-1}}x
	\end{bmatrix} \text{and} \\
	\varphi_n \circ \mu_{f} \circ \varphi_n^{-1}(g) & = \begin{bmatrix}
		i\overline{b}|x|^{-2} & i\overline{a}|x|^2 \\ i\overline{d}|x|^{-2} & i\overline{c}|x|^2
	\end{bmatrix}.
\end{align*}
For $t = \begin{bmatrix}
	s & 0 \\ 0 & s^{-1}
\end{bmatrix}$ with $|s| = 1$, $\mu_t(g) = \begin{bmatrix}
\overline{a}s & \overline{b}s^{-1} \\ \overline{c}s & \overline{d}s^{-1}
\end{bmatrix}$ so $\mu_t$ is equivalent to $\mu_{Id}$ (by taking $x^{-2} = s$), and for $t = \begin{bmatrix}
0 & iy \\ iy^{-1} & 0
\end{bmatrix}$ with $y \in \R$, $\mu_t(g) = \begin{bmatrix}
i\overline{b}y^{-1} & i\overline{a}y \\ i\overline{d}y^{-1} & i\overline{c}y
\end{bmatrix}$ so $\mu_t$ is equivalent to $\mu_f$ when $y < 0$. Now for $n = \begin{bmatrix}
x & 0 \\ 0 & x^{-1}
\end{bmatrix} \in \Aut_{\C}^G(X)$ we get

\begin{align*}
	\varphi_n \circ \mu_{Id} \circ \varphi_n^{-1}(g) & = \begin{bmatrix}
		\overline{ax}x^{-1} & \overline{bx^{-1}}x \\ \overline{cx}x^{-1} & \overline{dx^{-1}}x
	\end{bmatrix} \text{and} \\
	\varphi_n \circ \mu_{f} \circ \varphi_n^{-1}(g) & = - \begin{bmatrix}
		i\overline{b}|x|^{-2} & i\overline{a}|x|^2 \\ i\overline{d}|x|^{-2} & i\overline{c}|x|^2
	\end{bmatrix}.
\end{align*}
The second equality shows that $\mu_t$ is also equivalent to $\mu_f$ for $t = \begin{bmatrix}
	0 & iy \\ iy^{-1} & 0
\end{bmatrix}$ when $y > 0$. The previous computations show that $\mu_{Id}$ and $\mu_{f}$ are not equivalent.

Now let $\sigma = \sigma_c$, and fix $t = \begin{bmatrix}
	s & 0 \\ 0 & s^{-1}
\end{bmatrix}$ with $|s| = 1$. We have $\mu_t(g) = \begin{bmatrix}
s\overline{d} & -s^{-1}\overline{c} \\ -s\overline{b} & s^{-1}\overline{a}
\end{bmatrix}$. For $n = \begin{bmatrix}
x & 0 \\ 0 & x^{-1}
\end{bmatrix}$, $\varphi_n \circ \mu_{Id} \circ \varphi_n^{-1} = \begin{bmatrix}
|x|^{-2}\overline{d} & -|x|^2\overline{c} \\ -|x|^{-2}\overline{b} & |x|^2\overline{a}
\end{bmatrix}$. Therefore, $\mu_t$ is equivalent to $\mu_{Id}$ if $s > 0$, and to $-\mu_{Id}$ if $s<0$. Since those two real structures are not equivalent in the homogeneous case \cite{MJT20}, they are also not equivalent on $X$.

Finally for model (b1), the two classes we found when $\sigma = \sigma_s$ and all the $x_i$ are congruent modulo $\pi$ are equivalent if and only if $n = \begin{bmatrix}
	i & 0 \\ 0 & i
\end{bmatrix} \in \Aut_{\C}^G(X)$, which is not the case by \ref{prop : Aut}.
\end{proof}

\begin{remark}
	 Models (b1) and (c1) have other equivariant real structures, but those real structures are not effective, and therefore do not correspond to a real form of $X$.
\end{remark}

Using the same strategy we can also determine the real forms of the minimal smooth completions of $\SL_2/A_k$ when $k \geq 2$.

\begin{proposition} \label{prop 4.3}
	Let $k=2$, $\sigma \in \{\sigma_s,\sigma_c\}$ and $X$ be a minimal smooth completions of $\PGL_2$, Then the following table gives, in each case, a representant $\mu$ for each equivalence class of effective $(G,\sigma)$-equivariant real structures on $X$.
\end{proposition}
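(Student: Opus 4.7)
The plan is to mirror the three step strategy used for Proposition \ref{prop 4.1}, taking into account the fact that for $k=2$ one has $A_2=\{\pm I_2\}=Z(\SL_2)$, so that $N_G(H)/H=\PGL_2$ rather than $\SL_2$ itself. Thus the parametrizing space of equivariant real structures on the open orbit $X_0=\PGL_2$ given by Proposition \ref{prop:Z} is $Z^1(\Gamma,\PGL_2)$.

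First I would list, for each $\sigma\in\{\sigma_s,\sigma_c\}$, the cocycles $tH\in\PGL_2$ satisfying $\gamma\cdot(tH)=(tH)^{-1}$. The computation parallels Step 1 of Proposition \ref{prop 4.1}: writing a representative $t=\begin{bmatrix}a&b\\c&d\end{bmatrix}\in\SL_2$, the cocycle condition becomes $\sigma(t)t\in\{\pm I_2\}$ instead of $\sigma(t)t=I_2$, which enlarges the solution set by a sign. Concretely, for $\sigma_s$ one obtains classes of matrices of the two shapes $\begin{bmatrix}a&ix\\iy&\bar a\end{bmatrix}$ (the $+$ sign) and $\begin{bmatrix}a&b\\-\bar b&\bar a\end{bmatrix}$ (the $-$ sign), and analogously for $\sigma_c$.

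Next, for each diagram associated with $H=A_2$ listed in the Appendix, I would apply Theorem \ref{th:E}: the colored data of the $G$-orbits must be stable under the induced $\Gamma$-action on $\V^G(G/H)$ and $\D^B(G/H)$, and every orbit of type $\mathcal{B}_0$ or $\mathcal{B}_-$ must be individually fixed. Under the identification $\D^B\simeq\p^1/A_2$, the $\Gamma$-action sends $[\alpha:\beta]H\mapsto[\bar\alpha:\bar\beta]tH$ or $[-\bar\beta:\bar\alpha]tH$, and its effect on the two distinguished points $[0:1]$, $[1:0]$ selects the admissible shape of $t$, exactly as in the proof of Proposition \ref{prop 4.1}. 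For the models in the $(b2)$ family (with several type $\mathcal{B}_-$ orbits), the rigidity condition on the arguments $x_i$ of the additional colors has to be checked as in the non-cyclic cases.

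Finally, to identify equivalence classes I would use Corollary \ref{prop : Aut 2}, which lists $\Aut^G(X)$ for each model. An equivalence $\mu_{t'}=\varphi_n\circ\mu_t\circ\varphi_n^{-1}$ reduces to the equation $t'=\sigma(n)\,t\,n^{-1}$ in $\PGL_2$; for each shape of cocycle I would solve this equation with $n$ constrained to lie in the explicit subgroup of $\Aut^G(X)$ from Corollary \ref{prop : Aut 2}, producing the sought representatives. The main obstacle I anticipate is bookkeeping rather than conceptual: because $N_G(H)/H$ is now $\PGL_2$, each cocycle admits the extra freedom of a global sign, which doubles the number of candidate structures one must track, and one must be careful both (a) to distinguish effective from non-effective structures (cf.\ the Remark after Proposition \ref{prop 4.1}) and (b) to verify that the ``new'' classes produced by the sign ambiguity are genuinely inequivalent when $\Aut^G(X)$ is small enough not to absorb them, as happens on the more rigid models where $\Aut^G(X)$ is only a torus quotient.
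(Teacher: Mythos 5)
Your plan follows the paper's proof exactly: compute $Z^1(\Gamma,N_G(H)/H)$ with $N_G(H)/H\simeq\PGL_2$ (so the cocycle condition relaxes to $\sigma(t)t\in\{\pm I_2\}$, adding a second component to the cocycle set), then apply Theorem \ref{th:E} model by model via the action on $[0:1]$, $[1:0]$ and the extra colors of the $(b2)$ family, and finally sort into equivalence classes using Corollary \ref{prop : Aut 2}. The structure, the key inputs, and the anticipated subtleties (effectiveness, and whether the small automorphism groups absorb the extra sign) all match.

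There is, however, a concrete error in your Step 1 that would propagate into the extension analysis. For $\sigma=\sigma_s$ the ``$-$ sign'' component is the set of $t\in\SL_2$ with $\overline{t}\,t=-I_2$, i.e.\ $\overline{t}=-t^{-1}$; writing $t=\begin{bmatrix}a&b\\c&d\end{bmatrix}$ this forces $d=-\bar a$ and $b,c\in\R$ with $|a|^2+bc=-1$, so the correct shape is $\begin{bmatrix}a&b\\c&-\bar a\end{bmatrix}$ with $b,c$ \emph{real}. Your proposed shape $\begin{bmatrix}a&b\\-\bar b&\bar a\end{bmatrix}$ satisfies $\overline{t}\,t=-I_2$ only when $a=0$ (the condition $\bar a=-\bar a$ appears on the diagonal), so it parametrizes almost none of the new cocycles; it is in fact closer to the shape that arises for $\sigma_c$. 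Since Step 2 hinges on computing $\gamma\cdot[0:1]=[0:1]t$ and $\gamma\cdot[1:0]=[1:0]t$ for the actual cocycles, using the wrong normal form would give the wrong list of models admitting $(G,\sigma_s)$-structures coming from the new component (for instance the structures $gH\mapsto\sigma_s(g)eH$ on model $(\mathrm{a2})$ and $gH\mapsto\sigma_s(g)fH$ on model $(\mathrm{g2})$ live in this component). The fix is purely computational and the rest of your plan stands once the parametrization is corrected.
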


\[\tabulinesep=0.8mm
\begin{tabu}{|c|c|c|}
\hline 
\text{Models} & \mu \text{ for } \sigma=\sigma_s & \mu \text{ for } \sigma=\sigma_c \\ \hline 
\text{(a2)}  & \begin{array}{c}
	gH \mapsto \sigma_s(g)H \\ gH \mapsto \sigma_s(g)eH
\end{array} & \begin{array}{c}
	gH \mapsto \sigma_c(g)H \\ gH \mapsto \sigma_c(g)eH
\end{array}\\
\hline
\text{(c2),(d2),(e2),(f2)} & gH \mapsto \sigma_s(g)H & gH \mapsto \sigma_c(g)eH \\
\hline
\text{(g2)}  & \begin{array}{c}
	gH \mapsto \sigma_s(g)H \\ gH \mapsto \sigma_s(g)fH
\end{array} & \begin{array}{c}
	gH \mapsto \sigma_c(g)eH \\ gH \mapsto \sigma_c(g)efH
\end{array} \\ 
\hline
\begin{array}{c}
	\text{(b2 with $m\geq 3$) and all} \\ \text{the $x_i$ are congruent modulo $\pi$}
\end{array} &
g \mapsto \sigma_s(g)\begin{bmatrix}
	e^{-i\pi x} & 0 \\ 0 & e^{i\pi x}
\end{bmatrix}H &
g \mapsto \sigma_c(g)\begin{bmatrix}
	0 & e^{i\pi x} \\ -e^{-i\pi x} & 0
\end{bmatrix}H\\
\hline
\begin{array}{c}
	\text{(b2 with $m\geq 3$) and not all} \\ \text{the $x_i$ are congruent modulo $\pi$}
\end{array} &
\varnothing &
\varnothing\\ 
\hline
\end{tabu}\]

\begin{proof} As in the trivial case, we start by computing $Z^1(\Gamma,N_G(H)/H)$. Here $N_G(H)/H \simeq \PGL_2$ by \cite[Lemma 4.3]{MJT20} and we obtain
\begin{align*}
    Z^1(\Gamma,G/H) = & \left\{\begin{bmatrix}
		a & ix \\ iy & \bar{a}
	\end{bmatrix}H ; \ x,y \in \R, \  |a|^2 + xy = 1\right\} & \text{if $\sigma = \sigma_s$}\\ 
	\sqcup & \left\{\begin{bmatrix}
		a & b \\ c & -\bar{a}
	\end{bmatrix}H ; \ b,c \in \R, \  |a|^2 + bc = -1\right\} & \\
    Z^1(\Gamma,G/H) = & \left\{\begin{bmatrix}
		a & b \\ \bar{b} & d
	\end{bmatrix}H ; \ a,d \in \R, \  ad - |b|^2 = 1\right\} &  \text{if $\sigma = \sigma_c$} \\
	\sqcup & \left\{\begin{bmatrix}
		ix & b \\  -\bar{b} & iy
	\end{bmatrix}H ; \ x,y \in \R, \ |b|^2 - xy = 1\right\} &
\end{align*}

Then we use Theorem \ref{th:E} to find which of those real structures extend to an effective equivariant real structure on $X$, and we get the following\\
\scalebox{0.7}{$\tabulinesep=0.8mm
\begin{tabu}{|c|c|c|}
		\hline 
		\text{Models} & \sigma=\sigma_s & \sigma=\sigma_c \\ \hline 
		 \text{(a2)}  &  \begin{array}{c}
		 	\left\{\begin{bmatrix}
		 		a & ix \\ iy & \bar{a}
		 	\end{bmatrix}H ; \ x,y \in \R, \  |a|^2 + xy = 1 \right\}
		 	\\ \sqcup \left\{\begin{bmatrix}
		 		a & b \\ c & -\bar{a}
		 	\end{bmatrix}H ; \ b,c \in \R, \  |a|^2 + bc = -1\right\}
		 \end{array} & \begin{array}{c}
		 \left\{\begin{bmatrix}
		 	a & b \\ \bar{b} & d
		 \end{bmatrix}H ; \ a,d \in \R, \  ad - |b|^2 = 1\right\} \\
		 \sqcup \left\{\begin{bmatrix}
		 	ix & b \\  -\bar{b} & iy
		 \end{bmatrix}H ; \ x,y \in \R, \ |b|^2 - xy = 1\right\}
	 \end{array}\\
		\hline
		\text{(c2),(b2 with $m=1$)}    & \left\{\begin{bmatrix}
			a & ix \\ 0 & a^{-1}
		\end{bmatrix}H; \ |a| = 1, x \in \R \right\} & \left\{\begin{bmatrix}
		0 & b \\ -b^{-1} & iy
	\end{bmatrix}H; \ |b| = 1, y \in \R \right\} \\
		\hline
		\text{(d2), (e2) (f2) and (b2 with $m=2$)}  &\left\{\begin{bmatrix}
			a & 0 \\ 0 & a^{-1}
		\end{bmatrix}H; \ |a| = 1 \right\} & \left\{\begin{bmatrix}
			0 & b \\ -b^{-1} & 0
		\end{bmatrix}H; \ |b| = 1 \right\} \\
		\hline	
		\text{(g2)} & \left\{I_2H ; \begin{bmatrix}
			0 & i \\ i & 0
		\end{bmatrix}H; \begin{bmatrix}
		i & i \\ 0 & -i
	\end{bmatrix}H; \begin{bmatrix}
	i & 0 \\ -i & -i
	\end{bmatrix}H\right\} &
	\left\{\begin{bmatrix}
	    0 & 1 \\ -1 & 0
	\end{bmatrix}H ; \begin{bmatrix}
	-i & 0 \\ 0 & i
	\end{bmatrix}H; \begin{bmatrix}
	-i & i \\ i & 0
	\end{bmatrix}H; \begin{bmatrix}
	0 & i \\ -i & i
	\end{bmatrix}H\right\} \\
		\hline
		\begin{array}{c}
			\text{(b2 with $m\geq 3$) and all} \\ \text{the $x_i$ are congruent modulo $\pi$}
		\end{array} &
			 \begin{bmatrix}
				e^{-i\pi x} & 0 \\ 0 & e^{i\pi x}
			\end{bmatrix}H &
		 \begin{bmatrix}
			0 & e^{i\pi x} \\ -e^{-i\pi x} & 0
		\end{bmatrix}H\\
		\hline
		\begin{array}{c}
			\text{(b2 with $m\geq 3$) and not all} \\ \text{the $x_i$ are congruent modulo $\pi$}
		\end{array} &
		\varnothing &
		\varnothing\\
	\hline
\end{tabu}$}\\

Finally we use the automorphism groups computed in Corollary \ref{prop : Aut 2} to compute the equivalence classes of the previous effective equivariant real structures.
\end{proof}

When $H$ is cyclic of order $k \geq 3$, the diagrams of the minimal smooth completions of $\SL_2/H$ have been determined by Bousquet. They depend on whether $k$ is odd or even, we will start by assuming that $k$ is odd. In this case, the diagrams of the minimal smooth completions of $\SL_2/H$, up to conjugacy, are given in Figure 3 of the \hyperref[appendix: A]{Appendix}.

\begin{proposition} \label{prop 4.4}
		Let $k \geq 3$ be an odd integer and $X$ be a minimal smooth completions of $\SL_2/H$. Then there is no effective $(G,\sigma_c)$-equivariant real structures on $X$, and an unique effective $(G,\sigma_s)$-equivariant real structures on $X$ up to equivalence: $\mu_{Id} : gH \mapsto \sigma_s(g)H$.
\end{proposition}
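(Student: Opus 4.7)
Following the three-step approach of Propositions~\ref{prop 4.1} and \ref{prop 4.3}, the plan is first to compute $Z^1(\Gamma, N_G(H)/H)$ for both $\sigma = \sigma_s$ and $\sigma = \sigma_c$, then to use Theorem~\ref{th:E} to decide which cocycles extend to $X$, and finally to reduce the surviving cocycles modulo $\Aut_{\C}^G(X)$ using Corollary~\ref{prop : Aut 2}. For $k\geq 3$, by \cite[Lemma~4.3]{MJT20}, $N_G(H)/H$ has representatives of the two forms $t = \mathrm{diag}(a,a^{-1})$ or $t = e\cdot\mathrm{diag}(\alpha,\alpha^{-1})$. I would work out the cocycle condition $\sigma(t)t\in H$ directly in each case, invoking the key arithmetic fact that for $k$ odd one has $\mu_k\cap\R=\{1\}$. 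This would rule out the antidiagonal $\sigma_c$-cocycles (since $\sigma_c(t)t$ would then be diagonal with negative real entries, which cannot belong to $H$ for odd $k$), while preserving three families: diagonal $\sigma_s$-cocycles with $|a|=1$, antidiagonal $\sigma_s$-cocycles satisfying a constraint on $\arg(\alpha)$, and diagonal $\sigma_c$-cocycles with $a/\bar{a}\in\mu_k$.

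Next, I would apply Theorem~\ref{th:E} model by model in Figure~3. The central geometric observation is that on $\p^1/H$, a diagonal representative $t$ fixes both torus-fixed points $[0{:}1]H$ and $[1{:}0]H$, while an antidiagonal representative interchanges them. Combining this with the antilinear twist distinguishing the action formula for $\sigma_c$ from that for $\sigma_s$ shows that the diagonal $\sigma_c$-cocycles also swap the two torus-fixed points. Inspection of the Figure~3 diagrams reveals that the colored data attached to $[0{:}1]H$ and $[1{:}0]H$ is always asymmetric (the two spokes carry different orbit types or multiplicities), so any cocycle that swaps these two points violates condition~(1) of Theorem~\ref{th:E}. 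This eliminates all $\sigma_c$-cocycles and all antidiagonal $\sigma_s$-cocycles. The surviving diagonal $\sigma_s$-cocycles fix both torus-fixed points, and since none of the Figure~3 diagrams carry $\mathcal{B}$-orbits at non-torus-fixed points, conditions~(1) and~(2) of Theorem~\ref{th:E} are met automatically; in particular $\mu_{\mathrm{Id}}$ extends.

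Finally, to collapse the equivalence classes, I would use the inclusion $T/H\subset \Aut_{\C}^G(X)$ from Corollary~\ref{prop : Aut 2}. Conjugating $\mu_t$ for $t=\mathrm{diag}(a,a^{-1})$ by $\varphi_{\mathrm{diag}(x,x^{-1})}$ replaces $t$ by $\mathrm{diag}((x/\bar{x})a,(\bar{x}/x)a^{-1})$; since the map $x\mapsto x/\bar{x}$ surjects onto $S^1$, one can always arrange $tH = I_2 H$, proving that every surviving $\sigma_s$-cocycle is equivalent to $\mu_{\mathrm{Id}}$. I expect the main obstacle to lie in the second step, namely the case-by-case verification that the colored data at the two torus-fixed points of $\p^1/H$ is always asymmetric. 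However, this asymmetry should follow uniformly from the explicit construction of the minimal smooth completions listed in Section~\ref{section: setting}: the Umemura bundles $X_k(n,m)$, the $Y(a,b)$ with $a>b$ and $a+b=-k$, and the $Z(r)$ with $|r-1|=k$ all manifestly treat their two torus-fixed sections differently when $k$ is odd, so no genuinely model-dependent argument is needed.
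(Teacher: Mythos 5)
Your proposal follows the same three-step strategy as the paper's proof: compute $Z^1(\Gamma,N_G(H)/H)$ for both real group structures, observe that all $\sigma_c$-cocycles and the antidiagonal $\sigma_s$-cocycles induce the transposition of the two colors $[0{:}1]H$ and $[1{:}0]H$ while the diagonal $\sigma_s$-cocycles fix them both, and then collapse the surviving cocycles to $\mu_{Id}$ by conjugating with the diagonal torus inside $\Aut^G_{\C}(X)$. All the computations you outline (the cocycle conditions, including the exclusion of antidiagonal $\sigma_c$-cocycles because $-|b|^2\notin A_k$ for $k$ odd; the action on the two torus-fixed colors; the conjugation formula replacing $t$ by $\sigma_s(n)tn^{-1}$) match what the paper does.

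The one point where your argument as written would fail is the blanket claim that the colored data attached to $[0{:}1]H$ and $[1{:}0]H$ is \emph{always} asymmetric. The proposition is not restricted to projective completions, and your closing paragraph only surveys the projective models $X_k(n,m)$, $Y(a,b)$, $Z(r)$; it omits the nonprojective model (g3) of Figure~3, which is covered by the statement. In (g3) the two spokes at $[0{:}1]$ and $[1{:}0]$ carry identical chains of marks when the two integer parameters $n$ and $m$ coincide, so a cocycle swapping the two points \emph{does} preserve the collection of colored data and condition~(1) of Theorem~\ref{th:E} is satisfied. The swap is still excluded, but only by condition~(2): the two orbits of type $\mathcal{B}_-$ sitting at the ends of those spokes are interchanged rather than stabilized. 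So for that model you must invoke condition~(2) instead of asymmetry; with that amendment your argument is complete and coincides with the paper's.
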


\begin{proof}
    First, we have in this case $N_G(H) = \begin{bmatrix}
        * & 0 \\ 0 & *
    \end{bmatrix}$ and 
\begin{align*}
    Z^1(\Gamma,N_G(H)/H) = & \left\{\begin{bmatrix}
		a & 0 \\ 0 & a^{-1}
	\end{bmatrix}H; \ |a|=1 \right\}  & \text{if $\sigma = \sigma_s$}\\ 
	\sqcup & \left\{\begin{bmatrix}
		0 & ix \\ ix^{-1} & 0
	\end{bmatrix}H; \  x\in \R^* \right\} & \\
    Z^1(\Gamma,N_G(H)/H) = & \left\{\begin{bmatrix}
		x & 0 \\ 0 & x^{-1}
	\end{bmatrix}H ; \ x \in \R^* \right\} &  \text{if $\sigma = \sigma_c$}. \\ & &
\end{align*}

Then by Theorem \ref{th:E}, there are no effective equivariant real structures on $X$ for $\sigma_c$, and the ones for $\sigma_s$ are given by $tH \in \left\{\begin{bmatrix}
		s & 0 \\ 0 & s^{-1}
	\end{bmatrix}H; \ |s|=1 \right\} $. We have $\begin{bmatrix}
	* & 0 \\ 0 & *
\end{bmatrix} \subset  \Aut_{\C}^G(X)$ by Corollary \ref{prop : Aut 2}. Taking $nH = \begin{bmatrix}
x & 0 \\ 0 & x^{-1}
\end{bmatrix} \in \Aut_{\C}^G(X)$ with $x^{-2} =s$ gives the equivalence we need.
\end{proof}

Finally when $k \geq 4$ and is even, the diagrams of the minimal smooth completions of $\SL_2/H$ are given, up to conjugacy, in Figure 4 of the \hyperref[appendix:
A]{Appendix}.

\begin{proposition} \label{prop 4.5}
	Let $k \geq 4$ be an even integer, $\sigma \in \{\sigma_s,\sigma_c\}$ and $X$ be a minimal smooth completions of $\SL_2/A_k$, Then the following table gives, in each case, a representant $\mu$ for each class of effective $(G,\sigma)$-equivariant real structures on $X$.
\end{proposition}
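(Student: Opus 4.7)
I would follow the same three-step strategy as in the proofs of Propositions \ref{prop 4.1}, \ref{prop 4.3}, and \ref{prop 4.4}. First, I would compute $Z^1(\Gamma, N_G(H)/H)$ for both $\sigma = \sigma_s$ and $\sigma = \sigma_c$. When $k \geq 4$ is even, Corollary \ref{prop : Aut 2} yields
\[
N_G(H)/H = \left(\begin{bmatrix} * & 0 \\ 0 & * \end{bmatrix} \cup \begin{bmatrix} 0 & * \\ * & 0 \end{bmatrix}\right)/H,
\]
so the antidiagonal coset contributes a second family of cocycles that was absent in the odd case of Proposition \ref{prop 4.4}. Parametrizing by representatives in each coset and solving the cocycle condition $t\sigma(t) \in H$ explicitly (using that $H = A_k$ contains $-I_2$ since $k$ is even) should produce, for each $\sigma$, a short list of one-parameter families of cocycles.

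Next, for each model $X$ appearing in Figure 4 of the \hyperref[appendix: A]{Appendix}, I would apply Theorem \ref{th:E} to determine which elements of $Z^1(\Gamma, N_G(H)/H)$ extend to an effective $(G,\sigma)$-equivariant real structure on $X$. As in the proofs of Propositions \ref{prop 4.1} and \ref{prop 4.3}, this reduces to checking, for each parameter $t$, that the induced $\Gamma$-action on $\D^B$ and $\V^G$ permutes the colored data of the $G$-orbits and stabilizes every orbit of type $\mathcal{B}_0$ or $\mathcal{B}_-$. Since the colors $\p^1/A_k$ consist of the two $A_k$-fixed points $[0{:}1]$, $[1{:}0]$ together with the generic $A_k$-orbits, the diagonal cocycles will act by a rotation of the generic colors while the antidiagonal ones will swap $[0{:}1]$ and $[1{:}0]$. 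In the models analogous to (b1) and (b2), the condition ``all $x_i$ congruent modulo $\pi$'' should reappear as the precise obstruction for the diagonal family, and a similar congruence condition will govern the antidiagonal family.

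Finally, I would use the equivariant automorphism groups of Corollary \ref{prop : Aut 2} to partition the effective real structures found above into equivalence classes, by fixing a representative $\mu_t$ for each family and conjugating by a well-chosen $\varphi_n \in \Aut^G(X)$, as was done for model (e1) in the proof of Proposition \ref{prop 4.1}. The hard part will be the case-by-case bookkeeping: there are many diagrams in Figure 4, each with its own configuration of colored orbits, and the two families (diagonal and antidiagonal) must be compared across cosets. The comparison depends on whether $\Aut^G(X)$ itself contains antidiagonal elements, which varies from model to model according to Corollary \ref{prop : Aut 2}, so the passage from ``distinct cocycles'' to ``distinct equivalence classes'' will have to be handled separately for each row of the announced table.
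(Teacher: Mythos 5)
Your proposal follows exactly the paper's three-step strategy (compute $Z^1(\Gamma,N_G(H)/H)$ on both the diagonal and antidiagonal cosets, sift out the cocycles that extend effectively via Theorem \ref{th:E} model by model from Figure 4, then determine equivalence classes using Corollary \ref{prop : Aut 2}), and it correctly identifies the evenness of $k$ (hence $-I_2\in A_k$) as the source of the extra $\zeta_{2k}$-twisted cocycle families. One minor inaccuracy in your framing: the antidiagonal coset already contributes cocycles for $\sigma_s$ in the odd case of Proposition \ref{prop 4.4} (they just fail to extend there), and for even $k\geq 4$ the only models with a third distinguished color are (g4) and (g4') with a single point $[1{:}\alpha]$, so the congruence condition involves one argument $x=\arg(\alpha)$ rather than a (b1)-style family of $x_i$; neither point affects the validity of the argument.
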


\[\tabulinesep=0.8mm
\begin{tabu}{|c|c|c|}
	\hline 
	\text{Models} & \mu \text{ for } \sigma=\sigma_s & \mu \text{ for } \sigma=\sigma_c \\ \hline 
	\text{(a4),...,(f4),(i4)}  & 
		gH \mapsto \sigma_s(g)H &  gH \mapsto \sigma_c(g)eH\\
	\hline
	\begin{array}{c}
		\text{(g4)} \\ (x = \text{arg}(\alpha))
	\end{array} & \begin{array}{c}
	gH \mapsto \sigma_s(g)\begin{bmatrix}
		e^{-i\pi x} & 0 \\ 0 & e^{i\pi x}
	\end{bmatrix}H
	 \\ gH \mapsto \sigma_s(g)\begin{bmatrix}
  	0 & i|\alpha| \\ i|\alpha|^{-1} & 0
	\end{bmatrix}H \end{array} & \begin{array}{c}
	gH \mapsto \sigma_c(g)\begin{bmatrix}
	0 & e^{i\pi x} \\ -e^{-i\pi x} & 0
	\end{bmatrix}H
	\\ gH \mapsto \sigma_c(g)\begin{bmatrix}
	i|\alpha|^{-1} & 0 \\ 0 & -i|\alpha|
	\end{bmatrix}H \end{array} \\
	\hline
		\begin{array}{c}
		\text{(g4')} \\ (x = \text{arg}(\alpha))
	\end{array}  & gH \mapsto \sigma_s(g)\begin{bmatrix}
	e^{-i\pi x} & 0 \\ 0 & e^{i\pi x}
	\end{bmatrix}H  &  gH \mapsto \sigma_c(g)\begin{bmatrix}
	0 & e^{i\pi x} \\ -e^{-i\pi x} & 0
\end{bmatrix}H\\ 
	\hline
	\text{(h4)}  & \begin{array}{c}
		gH \mapsto \sigma_s(g)H \\ gH \mapsto \sigma_s(g)eH \\ gH \mapsto \sigma_s(g)e\w_{2n}H
	\end{array} & \begin{array}{c}
		gH \mapsto \sigma_c(g)H \\ gH \mapsto \sigma_c(g)eH \\ gH \mapsto \sigma_c(g)\w_{2n}H
	\end{array} \\ 
	\hline
\end{tabu}\]

\begin{proof}
Here we have
\begin{align*}
    Z^1(\Gamma,N_G(H)/H) = & \left\{\begin{bmatrix}
			a & 0 \\ 0 & a^{-1}
	\end{bmatrix}H; \ |a|=1 \right\}  & \text{if $\sigma = \sigma_s$}\\ 
	\sqcup & \left\{\begin{bmatrix}
			0 & ix \\ ix^{-1} & 0
	\end{bmatrix}H; \  x\in \R^* \right\}\\ 
    \sqcup & \left\{\begin{bmatrix}
			0 & ix\zeta_{2k} \\ ix^{-1}\zeta_{2k}^{-1} & 0
	\end{bmatrix}H; \ x \in \R^* \right\} & \\
    Z^1(\Gamma,N_G(H)/H) = & \left\{\begin{bmatrix}
		x & 0 \\ 0 & x^{-1}
	\end{bmatrix}H ; \ x \in \R^* \right\} &  \text{if $\sigma = \sigma_c$} \\ \sqcup & \left\{\begin{bmatrix}
		x\zeta_{2k} & 0 \\ 0 & x^{-1}\zeta_{2k}^{-1}
	\end{bmatrix}H ; \ x \in \R^* \right\} & \\
    \sqcup & \left\{\begin{bmatrix}
		0 & b \\ -b^{-1} & 0
	\end{bmatrix}H ; \ |b| = 1\right\}. &
\end{align*}
Then the following table gives, in each cases, for which $tH \in Z^1(\Gamma,G/H)$ the corresponding equivariant real structure $\mu_t : g \mapsto \sigma(g)tH$ on $X_0 = G/H$ extends to an effective ($G,\sigma$)-equivariant real structure on $X$:	

\scalebox{0.9}{$\tabulinesep=0.8mm
\hspace{-5mm}\begin{tabu}{|c|c|c|}
		\hline 
		\text{Models} & \sigma=\sigma_s & \sigma=\sigma_c \\ \hline 
		\text{(a4),...,(f4),(i4)}  &
			 \left\{\begin{bmatrix}
			a & 0 \\ 0 & a^{-1}
		\end{bmatrix}H ; \  |a| = 1\right\} & 
		\left\{\begin{bmatrix}
			0 & b \\ -b^{-1} & 0
		\end{bmatrix}H ; \ |b| = 1\right\} \\
 		\hline
		\text{(h4)}    & Z^1(\Gamma,G/A_k) & Z^1(\Gamma,G/A_k)\\
		\hline
		\begin{array}{c}
			\text{(g4)} \\ (x = \text{arg}(\alpha))
		\end{array} & \left\{\begin{bmatrix}
			e^{-i\pi x} & 0 \\ 0 & e^{i\pi x}
		\end{bmatrix}; \begin{bmatrix}
		0 & i|\alpha|\\ i|\alpha|^{-1} & 0
	\end{bmatrix} \right\} & 
	 \left\{\begin{bmatrix}
	 	0 & e^{i\pi x} \\ -e^{-i\pi x} & 0
	 \end{bmatrix}; \begin{bmatrix}
	 i|\alpha|^{-1} & 0 \\ 0 & -i|\alpha|
 \end{bmatrix}\right\} \\
		\hline	
				\begin{array}{c}
			\text{(g4')} \\ (x = \text{arg}(\alpha))
		\end{array} &  \begin{bmatrix}
			e^{-i\pi x} & 0 \\ 0 & e^{i\pi x}
		\end{bmatrix} &
	\begin{bmatrix}
		0 & e^{i\pi x} \\ -e^{-i\pi x} & 0
	\end{bmatrix}\\
		\hline
\end{tabu}$}\\

As in the previous case, it is relatively straightforward to find the equivalence classes of those real structures with Corollary \ref{prop : Aut 2}. Note that for model (g4) the two real structures we obtained in both cases are not equivalent in the homogeneous case \cite{MJT20}, so they are not equivalent for $X$.
\end{proof}

\section{Proof of the Main Theorem} \label{section: theorem}

We can now use the previous results to prove the main theorem. In this section, we always consider real structures up to equivalence and real forms up to isomorphism. We say that a real form (in the classical sense) $Y$ of a complex variety is equivariant under an action of $\SL_{2,\R}$ (resp of $\Spin_{3,\R}$) when there is an action of $\SL_{2,\R}$ (resp of $\Spin_{3,\R}$) on $Y$ such that $Y_{\C}$ is an almost homogeneous $\SL_{2,\C}$-threefold.

We recall that the correspondence between the geometric description and the description from the Luna-Vust point of view is given in the \hyperref[appendix: A]{Appendix}.

To begin with, note that all the geometrical descriptions given in Section \ref{section: setting} are valid over any field of characteristic zero. In particular, all the minimal $\SL_2$-threefolds have a trivial real form given by the same description. Furthermore, it is easily checked, by a case by case study, that those real forms are all rational. 

We first prove the following lemma, which helps us determine when a real form has no real points. Recall that a rational variety always has a real point (in fact a dense subset of real points).

\begin{lemma} \label{lemma: points}
    Let $X$ be a smooth almost homogeneous $\SL_2$-variety. If the intersection of $X(\R)$ with the open orbit of $X$ is empty, then $X(\R) = \varnothing$.
\end{lemma}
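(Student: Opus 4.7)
The plan is to exploit a dimension count: the real locus $X(\R)$, if nonempty, carries the full real dimension of $X$, whereas the complement of the open orbit is of strictly smaller dimension.

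First, I would check that the complement $Z := X \setminus X_0$ is defined over $\R$, where $X_0$ denotes the open $\SL_2$-orbit. Indeed, let $\mu$ be the effective $(\SL_2,\sigma)$-equivariant real structure on $X$ (corresponding to the real form under consideration). For any $g \in \SL_2(\C)$ and any $\SL_2$-orbit $\mathcal{O} \subseteq X$, the identity $\mu(g \cdot x) = \sigma(g) \cdot \mu(x)$ implies that $\mu(\mathcal{O})$ is again an $\SL_2$-orbit of the same dimension. Since $X_0$ is the unique open orbit of the almost homogeneous variety $X$, we deduce $\mu(X_0) = X_0$ and thus $\mu(Z) = Z$. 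Hence $Z$ descends to a closed subscheme of the real model of $X$.

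Next, I would compare dimensions. Since $X$ is almost homogeneous with open orbit $X_0$, we have $\dim_\C Z < \dim_\C X$. For any variety $V$ defined over $\R$, the set $V(\R)$ is a real analytic subset of topological dimension at most $\dim V$ (when nonempty). Applied to $Z$, this gives $\dim_\R Z(\R) \leq \dim_\C Z < \dim_\C X$. On the other hand, since $X$ is smooth, the real locus $X(\R)$ is, wherever nonempty, a smooth real manifold of dimension exactly $\dim_\C X$.

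Finally, I would conclude by contraposition. Suppose $X(\R) \neq \varnothing$, and pick $x \in X(\R)$. Then a small neighborhood of $x$ in $X(\R)$ is homeomorphic to an open subset of $\R^{\dim_\C X}$, so its intersection with the lower-dimensional closed subset $Z(\R)$ cannot be all of it. Consequently there exists a real point of $X$ lying in $X \setminus Z = X_0$, proving $X(\R) \cap X_0 \neq \varnothing$. This is exactly the contrapositive of the statement.

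The only point that requires care is the comparison of real-analytic dimensions with complex-algebraic ones, but since $X$ is smooth and $Z$ is a proper closed subvariety of strictly smaller complex dimension, the inequality $\dim_\R Z(\R) < \dim_\R X(\R)$ is immediate and the argument reduces to the standard fact that a lower-dimensional closed subset of a manifold does not cover it.
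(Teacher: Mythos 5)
Your argument is correct, but it is packaged differently from the paper's proof, which is a one-liner: it invokes \cite[Corollary 2.2.10]{Man20}, stating that the real locus of a smooth real variety is either Zariski-dense or empty, so that $X(\R)\neq\varnothing$ forces $X(\R)$ to meet the nonempty open set $X_0$. Your proof unpacks essentially the same phenomenon by hand: smoothness makes $X(\R)$ a manifold of full dimension $\dim_\C X$ near any real point, while the boundary $Z=X\setminus X_0$ is a proper closed subvariety whose real locus has strictly smaller topological dimension, so $X(\R)$ cannot be contained in $Z$. Both routes ultimately rest on the same key input (the real implicit function theorem at a smooth real point of a real variety); yours is self-contained and makes the role of the smoothness hypothesis explicit, at the cost of the preliminary verification that $Z$ is stable under the real structure $\mu$ --- a step that is genuinely needed in your approach (the crude bound $\dim_\R(X(\R)\cap Z)\le 2\dim_\C Z$ would not suffice, since $2\dim_\C Z$ can exceed $\dim_\C X$), and which you carry out correctly via the uniqueness of the open orbit. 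The paper's version is shorter by delegating the density statement to the literature; the two proofs are otherwise interchangeable.
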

\begin{proof}
    By \cite[Corollary 2.2.10]{Man20}, the real locus of a smooth real variety is either dense or empty. Hence, either $X(\R)$ intersects the open orbit of $X$ or $X(\R) = \varnothing$. 
\end{proof}

\begin{remark}
    For $H$ a finite subgroup of $\SL_2$ and $\sigma \in \{ \sigma_s,\sigma_c\}$, the list of all $(G,\sigma)$-equivariant real structures on $\SL_2/H$, as well as the corresponding real loci, can be found in \cite[Appendix C]{MJT20}. We can therefore use the previous lemma to determine if $X$ has a real point for all minimal smooth completions $X$ of $\SL_2/H$.
\end{remark}

We now  split the proof of the \hyperref[Main Theorem]{Main Theorem} into a case by case study, starting with the Umemura $\p^1$-bundles $X_k(n,m) \rightarrow \F_k$. 

\begin{proposition}
    The $\p^1$-bundle $X_k(n,m) \rightarrow \F_k$ always has a unique real $\SL_{2,\R}$-form, which is rational. Moreover, it has no real $\Spin_{3,\R}$-form when $k$ is odd, and a unique real $\Spin_{3,\R}$-form, which has no real points, when $k$ is even. Finally, all real forms of $X_k(n,m)$ are equivariant under an action of either $\SL_{2,\R}$ or $\Spin_{3,\R}$.
\end{proposition}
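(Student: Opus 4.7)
The plan is to match $X_k(n,m)$ with Luna--Vust diagrams from the \hyperref[appendix: A]{Appendix}, read off the equivariant real structures from the classification of Section \ref{section: forms}, and then separately verify rationality, emptiness of real loci, and the fact that every classical real form is equivariant.

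According to the \hyperref[appendix: A]{Appendix}, $X_k(n,m)$ corresponds to diagram (c1) when $k = 1$, to (c2) when $k = 2$, to (g3) when $k \geq 3$ is odd, and to (i4) when $k \geq 4$ is even. Applying Propositions \ref{prop 4.1}, \ref{prop 4.3}, \ref{prop 4.4}, and \ref{prop 4.5} to these four models, each yields exactly one equivalence class of effective $(G,\sigma_s)$-equivariant real structures, represented by $gA_k \mapsto \sigma_s(g)A_k$, and one equivalence class of effective $(G,\sigma_c)$-equivariant real structures, represented by $gA_k \mapsto \sigma_c(g)eA_k$, precisely when $k$ is even. This gives the stated number of $\SL_{2,\R}$- and $\Spin_{3,\R}$-equivariant real forms.

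Rationality of the $\SL_{2,\R}$-form is then immediate: the construction of $X_k(n,m)$ as $\p(E_k(n,m)) \to \F_k$ in Section \ref{section: setting} is valid over $\R$, producing a real $\SL_{2,\R}$-variety whose complexification recovers $X_k(n,m)$. By the uniqueness just established, this is the $(G,\sigma_s)$-equivariant form $\mu_{Id}$. Since $\F_{k,\R}$ is a rational real surface and a Zariski-locally trivial $\p^1$-bundle over a rational variety is rational, this real form is rational. For emptiness of real points when $k$ is even, I would apply Lemma \ref{lemma: points}: it suffices to show that the real locus of $\mu_e : gA_k \mapsto \sigma_c(g)eA_k$ on the open orbit $\SL_2/A_k$ is empty. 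A fixed point corresponds to $g = \begin{bmatrix} a & b \\ c & d \end{bmatrix} \in \SL_2(\C)$ with $g^{-1}\sigma_c(g)e \in A_k$, i.e.\ $\sigma_c(g)e = g\cdot\mathrm{diag}(\zeta_k^j,\zeta_k^{-j})$ for some $0 \leq j < k$. Since $\sigma_c(g)e = \begin{bmatrix} \bar c & \bar d \\ -\bar a & -\bar b \end{bmatrix}$, comparing the first column with $\begin{bmatrix} a\zeta_k^j \\ c\zeta_k^j\end{bmatrix}$ yields both $c = \bar a\zeta_k^{-j}$ and $c = -\bar a\zeta_k^{-j}$, hence $a = c = 0$, contradicting $\det(g) = 1$.

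The main obstacle is the final claim that every classical real form of $X_k(n,m)$ is equivariant under a real form of $\SL_2$. Here I would invoke Umemura's description \cite{Ume88} of $\Aut^{\circ}(X_k(n,m))$: the image of $\SL_{2,\C}$ is a maximal connected semisimple subgroup, and in fact coincides with the semisimple part of $\Aut^{\circ}(X_k(n,m))$, hence is characteristic in this algebraic group. Any antiholomorphic involution $\mu$ on $X_k(n,m)$ induces an antiholomorphic automorphism of $\Aut^{\circ}(X_k(n,m))$ via $\varphi \mapsto \mu\circ\varphi\circ\mu^{-1}$, which must preserve this distinguished $\SL_{2,\C}$ and restrict to an antiholomorphic involution $\sigma$ on it. By construction $\mu(g\cdot x) = \sigma(g)\cdot\mu(x)$ for all $g \in \SL_2(\C)$ and $x \in X_k(n,m)$, so $\mu$ is $(G,\sigma)$-equivariant with $\sigma \in \{\sigma_s,\sigma_c\}$, which completes the proof.
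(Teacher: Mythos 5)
Your first three steps follow essentially the paper's strategy (count the equivariant structures via Propositions \ref{prop 4.1}--\ref{prop 4.5}, identify the $\sigma_s$-form with the trivial one to get rationality, and use Lemma \ref{lemma: points} for the empty real locus), and your direct computation showing that $g^{-1}\sigma_c(g)e \in A_k$ forces $a=c=0$ is correct -- it is a nice self-contained replacement for the paper's citation of \cite[Appendix C]{MJT20}. One factual slip: $X_k(n,m)$ does \emph{not} correspond to diagrams (c1), (g3), (i4) -- those are the nonprojective models, whereas the Umemura bundles appear as (f1)/(g1)/(k1), (c2)/(f2), (b3)/(c3)/(f3) and (b4)/(c4)/(f4). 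You get away with it only because the relevant rows of Propositions \ref{prop 4.1}, \ref{prop 4.3} and \ref{prop 4.5} happen to give the same answer for the nonprojective models, and Proposition \ref{prop 4.4} is uniform over all models; you should still correct the labels.

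The genuine gap is in the final claim. Your argument rests on the assertion that the image of $\SL_{2,\C}$ in $\Aut^{\circ}(X_k(n,m))$ is the ``semisimple part'' and hence characteristic. That is not justified, and is false in general: for these varieties $\Aut^{\circ}(X_k(n,m))$ has a nontrivial unipotent radical (already $\Aut^{\circ}(\F_k)$ does), the image of $\SL_2$ is only a Levi-type subgroup, unique up to conjugation by the unipotent radical rather than as an actual subgroup, and it does not coincide with the derived subgroup (which contains part of the unipotent radical). So conjugation by an arbitrary antiregular involution $\mu$ need not preserve the chosen copy of $\SL_{2,\C}$; at best it sends it to a conjugate, and you would then have to modify $\mu$ within its equivalence class while keeping it an involution, and check that the induced real group structure on the (possibly non-faithful) image lifts to $\sigma_s$ or $\sigma_c$ on $\SL_2$. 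None of this is carried out. The paper avoids the issue entirely by invoking the classification of \emph{all} real forms of $X_k(n,m)$ as abstract varieties (\cite[Proposition 7.1]{TZ} and its proof): since the total number of classical real forms equals the number of equivariant ones already found, every real form must be equivariant. You need either to cite such a classification or to substantially repair the structural argument.
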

\begin{proof}
We first suppose that $k$ is odd. Then, by Propositions \ref{prop 4.1} and \ref{prop 4.4}, there is only one $(G,\sigma_s)$-equivariant real structure on $X_k(n,m)$ and no $(G,\sigma_c)$-equivariant real structure. Hence, when $k$ is odd, $X_k(n,m)$ has one real $\SL_{2,\R}$-form which must be the trivial one and is therefore rational.

On the other hand when $k$ is even, by Propositions \ref{prop 4.3} and \ref{prop 4.5}, we have one $(G,\sigma)$-equivariant real structure for each $\sigma \in \{\sigma_s, \sigma_c\}$. For $\sigma_c$, the real locus of the open orbit is empty by \cite[Appendix C]{MJT20}. Therefore by Lemma \ref{lemma: points}, when $k$ is even, $X_k(n,m)$ has a rational real $\SL_{2,\R}$-form and a real $\Spin_{3,\R}$-form with no real points.
Finally, there are no other real forms by \cite[Proposition 7.1]{TZ} and its proof.
\end{proof}

\begin{proposition}
    The decomposable $\p^2$-bundle $Z(1-k) \rightarrow \p^1$ has a unique real $\SL_{2,\R}$-form, which is rational, for all $k$. Moreover, it has no real $\Spin_{3,\R}$-form when $k$ is odd, and a unique real $\Spin_{3,\R}$-form, which has no real points, when $k$ is even. Finally, all real forms of $Z(1-k)$ are equivariant under an action of either $\SL_{2,\R}$ or $\Spin_{3,\R}$.
\end{proposition}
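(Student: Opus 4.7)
The plan is to mirror the proof of the preceding proposition on $X_k(n,m)$. The first step is to identify, through the dictionary in the Appendix, the Luna--Vust diagram of $Z(1-k)$: this will be a specific model appearing in Proposition \ref{prop 4.1} when $k=1$, in Proposition \ref{prop 4.3} when $k=2$, in Proposition \ref{prop 4.4} when $k\geq 3$ is odd, and in Proposition \ref{prop 4.5} when $k\geq 4$ is even. Once this identification is made, the count of equivariant real structures reduces to a direct reading of the tables in Section \ref{section: forms}.

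When $k$ is odd, Propositions \ref{prop 4.1} and \ref{prop 4.4} give a unique effective $(G,\sigma_s)$-equivariant real structure on $Z(1-k)$ and no $(G,\sigma_c)$-equivariant real structure, producing a single $\SL_{2,\R}$-form and no $\Spin_{3,\R}$-form. When $k$ is even, Propositions \ref{prop 4.3} and \ref{prop 4.5} give exactly one effective equivariant real structure for each of $\sigma_s$ and $\sigma_c$. In either parity the $\SL_{2,\R}$-form is the trivial one, coming directly from the description $Z(r)=\p(\O_{\p^1}^{\oplus 2}\oplus\O_{\p^1}(r))$, which is visibly defined over $\R$ and rational as a Zariski-locally trivial $\p^2$-bundle over $\p^1_{\R}$.

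For the $\Spin_{3,\R}$-form that appears only when $k$ is even, I would read off from \cite[Appendix C]{MJT20} the real locus of the corresponding $(G,\sigma_c)$-equivariant real structure on the open orbit $\SL_2/A_k$. This real locus is empty, so Lemma \ref{lemma: points} forces the real locus of the full real form of $Z(1-k)$ to be empty as well, and in particular this form is not rational. Finally, exhaustiveness, i.e.\ the statement that every real form of $Z(1-k)$ in the classical sense is equivariant under one of the two real forms of $\SL_2$, is obtained exactly as in the previous proof via \cite[Proposition 7.1]{TZ} and its proof. The main obstacle I expect is the real-point analysis for $\sigma_c$ when $k$ is even: one has to pick the correct representative of the cocycle class (of the form $g\mapsto\sigma_c(g)\,e$, as given by Propositions \ref{prop 4.3} and \ref{prop 4.5}) and verify on the open orbit that no point is fixed; everything else is bookkeeping on top of the propositions of Section \ref{section: forms}.
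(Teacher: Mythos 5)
Your proposal follows essentially the same route as the paper, which simply states that the proof is identical to the Umemura $\p^1$-bundle case: read the diagram off the Appendix, count effective equivariant real structures via Propositions \ref{prop 4.1}--\ref{prop 4.5} according to the parity of $k$, identify the $\SL_{2,\R}$-form with the trivial (rational) one, use \cite[Appendix C]{MJT20} and Lemma \ref{lemma: points} to see the $\sigma_c$-form has empty real locus, and appeal to \cite{TZ} for the absence of non-equivariant forms. The only discrepancy is the final citation: for the decomposable $\p^2$-bundles $Z(1-k)$ the paper invokes \cite[Proposition 5.3 and Remark 5.4]{TZ} rather than \cite[Proposition 7.1]{TZ}, which concerns the Umemura $\p^1$-bundles.
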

\begin{proof}
    The proof is identical to the case of the Umemura $\p^1$-bundles. The non-equivariant real forms are in \cite[Proposition 5.3 and Remark 5.4]{TZ}.
\end{proof}

\begin{proposition}
    For $n \geq 2$, the $\p^1$-bundle $W(n+1) \rightarrow \p^2$ has a unique real form. This real form is rational and is equivariant under an action of $\SL_{2,\R}$, but not under an action of $\Spin_{3,\R}$.
    On the other hand, the variety $W(1)$ has three real forms : \begin{enumerate}[leftmargin = 3mm]
        \item[-] The trivial real form which is rational and is equivariant under an action of $\SL_{2,\R}$ but not of $\Spin_{3,\R}$.
        \item[-] A rational real form which is equivariant under an action of both $\Spin_{3,\R}$ and $\SL_{2,\R}$.
        \item[-] A real form with no real points which is equivariant under an action of $\Spin_{3,\R}$ but not of $\SL_{2,\R}$.
    \end{enumerate}
\end{proposition}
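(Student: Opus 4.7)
I treat the two cases separately.

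For $W(n+1)$ with $n\ge 2$, the variety is a minimal smooth completion of $\SL_2/A_{n+1}$, and I first locate its Luna--Vust diagram in the Appendix. If $n+1$ is odd, Proposition \ref{prop 4.4} gives a unique equivalence class of effective $(G,\sigma_s)$-equivariant real structure on $W(n+1)$ and no effective $(G,\sigma_c)$-equivariant real structure, so $W(n+1)$ has exactly one equivariant real form: it arises from the $\R$-defined Nakano construction and is therefore rational, $\SL_{2,\R}$-equivariant, and not $\Spin_{3,\R}$-equivariant. If $n+1$ is even I apply Proposition \ref{prop 4.5} to the diagram of $W(n+1)$ and obtain the same conclusion by checking that the colored equipment of $W(n+1)$ is preserved only by the $\Gamma$-action induced by the trivial $(G,\sigma_s)$-structure, so that no $(G,\sigma_c)$-equivariant real structure extends effectively.

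For $W(1)$, I identify the variety with model (e1) of the Appendix and apply Proposition \ref{prop 4.1}, which yields four equivalence classes of effective equivariant real structures: the two structures $g\mapsto \sigma_s(g)$ and $g\mapsto \sigma_s(g)f$ for $\sigma=\sigma_s$, with $f=\begin{bmatrix} 0 & i \\ i & 0\end{bmatrix}$, and the two structures $g\mapsto \sigma_c(g)$ and $g\mapsto -\sigma_c(g)$ for $\sigma=\sigma_c$. Using the tables in \cite[Appendix C]{MJT20}, the real locus of the open orbit $\SL_2\subset W(1)$ is, respectively, $\SL_2(\R)$, a noncompact twisted form of $\SL_2$, $\SU(2)$, and the empty set, so by Lemma \ref{lemma: points} the last structure gives a real form with empty real locus, while the other three give real forms with nonempty (hence rational) real locus.

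The key point is then to show that the $\SL_{2,\R}$-form associated to $g\mapsto \sigma_s(g)f$ and the $\Spin_{3,\R}$-form associated to $g\mapsto\sigma_c(g)$ are isomorphic as abstract real varieties, which is what forces the count of real forms to drop from four to three and produces the listed rational real form admitting both an $\SL_{2,\R}$-action and a $\Spin_{3,\R}$-action. My plan is to exhibit a non-$G$-equivariant automorphism of $W(1)_\C$ conjugating one scheme involution to the other, exploiting the fact that $\Aut(W(1))$ strictly contains $\Aut^G(W(1))$ through the $\p^1$-bundle structure $W(1)\to\p^2$; equivalently, one can describe both real forms explicitly through Nakano's construction twisted by the relevant cocycle and identify them directly. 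This identification is the main obstacle, as it lies outside the purely equivariant Luna--Vust machinery developed in the previous sections. Combined with the previous steps, it yields exactly three real forms of $W(1)$: the trivial rational $\SL_{2,\R}$-form, the common rational $\SL_{2,\R}/\Spin_{3,\R}$-form, and the $\Spin_{3,\R}$-form with empty real locus.
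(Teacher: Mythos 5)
There are two genuine gaps. First, you misidentify $W(n+1)$ for $n\ge 2$: it is not a completion of $\SL_2/A_{n+1}$ but of $\SL_2$ itself (it is obtained by equivariantly contracting a divisor in $X_1(n+1,n+1)$, whose open orbit is $\SL_2/A_1=\SL_2$; in the Appendix it is model (d1) of Figure~1, the completions of $\SL_2$). The correct reference is therefore Proposition \ref{prop 4.1}, row (d1), which directly gives one $(G,\sigma_s)$-equivariant structure and no $(G,\sigma_c)$-equivariant structure, with no parity discussion at all. Your case split is not just spurious but unworkable: Proposition \ref{prop 4.5} contains no $W$-type model, and its generic rows \emph{do} produce a $(G,\sigma_c)$-structure, so the ``check'' you defer to in the even case cannot be carried out as described. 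Moreover, the statement asserts uniqueness of the real form in the classical (non-equivariant) sense, so one must also exclude real forms that are not equivariant under any real form of $\SL_{2,\C}$; the paper does this by citing \cite[Corollary 7.2]{TZ}, and your argument only counts equivariant forms.

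Second, for $W(1)$ the decisive step --- that the $\sigma_s$-structure $g\mapsto\sigma_s(g)f$ and the $\sigma_c$-structure $g\mapsto\sigma_c(g)$ yield isomorphic abstract real varieties --- is only announced as a ``plan'' and never carried out, and you correctly flag it as the main obstacle. The paper does not construct the conjugating non-equivariant automorphism either: it identifies $W(1)$ with the flag variety $\PGL_{3,\C}/B$ and invokes \cite[Proposition 8.1]{TZ}, which says this variety has exactly three real forms, two rational and one with no real points; after matching the unique empty-locus equivariant structure with the pointless form, the collapse from the remaining three equivariant structures to two forms follows by counting, and the combinatorial data singles out $g\mapsto\sigma_s(g)$ as the trivial form. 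That external input also settles two points you leave open: that the trivial form is not isomorphic to the other rational one (which you need both for the count of three and for the claim that the trivial form is not $\Spin_{3,\R}$-equivariant), and that the forms with real points are in fact rational --- ``nonempty real locus, hence rational'' is not a general implication but part of what is being proved here.
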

\begin{proof}
For $n \geq 2$, there is, by Proposition \ref{prop 4.1}, only one $(G,\sigma_s)$-equivariant real structure on $W(n+1)$ and no $(G,\sigma_c)$-equivariant real structure. Hence, $W(n+1)$ only has one real $\SL_{2,\R}$-form, which is the trivial one and is in particular rational. By \cite[Corollary 7.2]{TZ}, there are no other real forms for $W(n+1)$.

The variety $W(1)$ can also be defined as the complex flag variety $\PGL_{3,\C}/B$ where $B$ is a Borel subgroup $\PGL_{3,\C}$. By \cite[Proposition 8.1]{TZ}, this variety has three real forms, two rational ones and one with no real points. We only obtained one real structure with an empty real locus which therefore corresponds to the unique real form with no real points. For the other real structures, the $\Gamma$-action given by the real structure defined on the open orbit by $g \mapsto \sigma_s(g)$ acts trivially on the combinatorial data, whereas the other two real structures with a non-empty real locus do not. Therefore the real structure given by $g \mapsto \sigma_s(g)$ corresponds to the trivial real form and the other two real structures correspond to the nontrivial real form.
\end{proof}

Let us now look at the three isolated models, namely $\p^3 \simeq \p(R_1 \oplus R_1)$, $\p(R_2) \times \p(R_1)$ and $Q_3 \subset \p^4 \simeq \p(R_1 \oplus R_1 \oplus R_0)$.
\begin{proposition}
    The variety $\p^3 \simeq \p(R_1 \oplus R_1)$ has one rational real form and one real form with no real points. Those two varieties are both equivariant under an action of $\SL_{2,\R}$ and an action of $\Spin_{3,\R}$.
\end{proposition}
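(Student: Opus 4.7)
The plan is to combine the classification of equivariant real structures from Proposition \ref{prop 4.3} with the classical classification of real forms of projective space. Applied to model (a2), Proposition \ref{prop 4.3} provides exactly four effective equivariant real structures on $\p^3$ up to equivalence, namely $gH \mapsto \sigma(g)H$ and $gH \mapsto \sigma(g)eH$ for each $\sigma \in \{\sigma_s, \sigma_c\}$. This yields four equivariant real forms, and the task is to collate them into isomorphism classes as real varieties.

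First, recall that a real form of $\p^3_\C$ is a three-dimensional Severi-Brauer variety over $\R$, and hence corresponds to a central simple $\R$-algebra of degree $4$. Up to isomorphism these are $M_4(\R)$ and $M_2(\mathbb{H})$, giving exactly two classical real forms: the rational form $\p^3_\R$, and the non-trivial Severi-Brauer threefold, the latter having no $\R$-points because a Severi-Brauer variety with a rational point is split.

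Next, the plan is to decide which of the four equivariant structures admits real points by means of Lemma \ref{lemma: points} applied on the open orbit $\PGL_2 \subset \p^3$. The untwisted structures fix the coset of the identity and so have real points. For an $e$-twist, the fixed-point equation $g = \pm \sigma(g) e$ in $\SL_2$ forces the diagonal entries of $g$ to vanish, contradicting $\det g \neq 0$; thus the two $e$-twisted structures have empty intersection with the open orbit, and Lemma \ref{lemma: points} gives that their entire real locus is empty.

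Finally, since any real form of $\p^3$ with a real point is necessarily split and hence isomorphic to $\p^3_\R$, the two untwisted structures both yield the rational form while the two $e$-twisted structures both yield the non-rational Severi-Brauer threefold. In particular each of the two classical real forms is equivariant under both $\SL_{2,\R}$ and $\Spin_{3,\R}$, matching the entries for $\p^3$ in the \hyperref[Main Theorem]{Main Theorem} table. The only delicate input is the classical rational-point obstruction for Severi-Brauer varieties; once this is in place, the equivariant classification determines everything.
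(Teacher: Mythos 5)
Your argument is correct and follows the same overall strategy as the paper's proof: count the equivalence classes of $(G,\sigma)$-equivariant real structures for each $\sigma\in\{\sigma_s,\sigma_c\}$, invoke the classical fact that $\p^3_{\C}$ has exactly two real forms distinguished by the presence of real points, and match them up. The one substantive difference is the input: you apply Proposition \ref{prop 4.3} to $\p^3$ viewed as the model (a2) completion of $\PGL_2$, whereas the paper's proof cites Proposition \ref{prop 4.1}, i.e.\ treats $\p^3$ as the model (a1) completion of $\SL_2$ (which is also how the Appendix labels it). Your reading is the geometrically correct one: the center $\{\pm I\}$ acts trivially on $\p(R_1\oplus R_1)=\p(M_2(\C))$, so the open orbit is $\PGL_2=\SL_2/A_2$, exactly as the introduction states; and it is also the reading that makes the count come out right, since the table of Proposition \ref{prop 4.1} lists only \emph{one} $(G,\sigma_s)$-structure for model (a1), while the assertion that there are two structures for each $\sigma$, one with empty real locus, is borne out precisely by the (a2) row of Proposition \ref{prop 4.3}. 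Your explicit appeal to the Severi--Brauer classification (degree-$4$ central simple $\R$-algebras are $M_4(\R)$ and $M_2(\mathbb{H})$, and a Severi--Brauer variety with a rational point is split) is a clean justification of the paper's ``it is known that $\p^3$ has two real forms.''

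Two small points. First, in the real-locus computation for the $e$-twisted structures, the deduction ``the diagonal entries of $g$ vanish, contradicting $\det g\neq 0$'' is not valid as stated: a matrix with zero diagonal can have determinant $1$. What the fixed-point equation $g=\pm\sigma(g)e$ actually forces is the vanishing of a full row or column: writing $g$ with entries $a,b,c,d$, for $\sigma_s$ one gets $a=-\epsilon\bar b$ and $b=\epsilon\bar a$ with $\epsilon=\pm1$, hence $a=-a$ and $a=b=0$; for $\sigma_c$ one gets $a=\epsilon\bar c$ and $c=-\epsilon\bar a$, hence $a=c=0$. Either way $\det g=0$, which is the genuine contradiction, so your conclusion that both $e$-twisted structures have empty real locus stands. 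Second, to recover the entries of the Main Theorem table one should also observe that, for a fixed $\sigma$, the two structures are inequivalent; this is immediate from your computation (one has nonempty real locus, the other does not), but it deserves a sentence.
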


\begin{proof}
Let $\sigma \in \{\sigma_s,\sigma_c\}$, by Proposition \ref{prop 4.1} there are two $(G,\sigma)$-equivariant real structures, with one of them having an empty real locus. However, it is known that $\p^3$ has two real forms, one is rational while the other has no real points. Therefore the real $\SL_{2,\R}$-form with no real points corresponds to the same real form as the real $\Spin_{3,\R}$-form with no real points. The same can be said for the rational real form of $\p^3$.
\end{proof}

\begin{proposition}
    The variety $\p(R_2) \times \p(R_1)$ has two real forms. One is rational and is equivariant under an action of both $\SL_{2,\R}$ and $\Spin_{3,\R}$, while the other one has no real points and is not equivariant under an action of a real form of $\SL_{2,\C}$.
\end{proposition}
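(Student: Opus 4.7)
The plan is to combine the equivariant classification of Section \ref{section: forms} with a direct description of the real forms of $\p^2 \times \p^1$ as a bare variety, then match the two pictures.

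First, I would identify $\p(R_2) \times \p(R_1)$ as the (unique) appropriate model in Proposition \ref{prop 4.5}: since the open orbit is $\SL_2/A_4$ and $k=4$ is even, that proposition produces exactly one equivalence class of effective $(G,\sigma_s)$-equivariant real structures $\mu_s$ and exactly one equivalence class of effective $(G,\sigma_c)$-equivariant real structures $\mu_c$ on $\p(R_2) \times \p(R_1)$. Therefore $\p(R_2) \times \p(R_1)$ carries at most one real $\SL_{2,\R}$-form and at most one real $\Spin_{3,\R}$-form, and these are indeed realized.

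Second, using Lemma \ref{lemma: points} together with the explicit description of the real loci on $\SL_2/A_4$ given in \cite[Appendix C]{MJT20}, I would check that both $\mu_s$ and $\mu_c$ have non-empty real locus on the open orbit, hence non-empty real locus on $\p(R_2) \times \p(R_1)$. In particular, both equivariant real forms possess real points.

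Third, I would classify the real forms of $\p^2 \times \p^1$ as a bare $\R$-variety. Since $\text{Br}(\R) = \Z/2\Z$ has no element of order $3$, the only real form of $\p^2$ is $\p^2_\R$; on the other hand $\p^1$ has exactly two real forms, namely $\p^1_\R$ and the anisotropic conic $C$ (with $C(\R) = \varnothing$). Because $\p^2 \not\simeq \p^1$, no automorphism of $\p^2 \times \p^1$ exchanges the two factors and one has $\Aut(\p^2 \times \p^1) = \PGL_3 \times \PGL_2$, so Galois cohomology splits as a product and the real forms of $\p^2 \times \p^1$ are exactly the products of real forms of the factors. This gives precisely two real forms: the rational $\p^2_\R \times \p^1_\R$ and the anisotropic $\p^2_\R \times C$, which has no real points since $(\p^2_\R \times C)(\R) = \p^2_\R(\R) \times C(\R) = \varnothing$.

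Finally, since both $\mu_s$ and $\mu_c$ yield a real form with real points, and $\p^2_\R \times \p^1_\R$ is the unique real form of $\p^2 \times \p^1$ with real points, both equivariant real forms are isomorphic to $\p^2_\R \times \p^1_\R$. This proves that the rational real form is equivariant under both $\SL_{2,\R}$ and $\Spin_{3,\R}$, while the remaining real form $\p^2_\R \times C$ has no real points and is not equivariant under either real form of $\SL_{2,\C}$. The main obstacle is the splitting step for the real forms of $\p^2 \times \p^1$: the Künneth-type argument for $H^1(\Gamma,\Aut(\p^2 \times \p^1))$ requires verifying that the automorphism group is genuinely a direct product (no factor swap), which is immediate here only because the two factors have different dimensions.
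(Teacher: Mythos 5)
Your proposal is correct and follows essentially the same route as the paper: both arguments take the single $(G,\sigma_s)$- and single $(G,\sigma_c)$-equivariant real structure from Proposition \ref{prop 4.5} (model (g4')), verify via the open orbit that each has real points, and then match against the two real forms of $\p^2\times\p^1$ as a bare variety. The only difference is that you actually justify that last classification (via $\Aut(\p^2\times\p^1)=\PGL_3\times\PGL_2$ and the Brauer-group computation killing any nontrivial form of the $\p^2$ factor), whereas the paper simply asserts it; that is a welcome, if minor, addition.
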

\begin{proof}
In this case there is one $(G,\sigma_s)$-equivariant real structure given by $gH \mapsto \sigma_s(g) \begin{bmatrix}
    e^{-i\pi x} & 0 \\ 0 & e^{i\pi x}
\end{bmatrix}H$. On the open orbit, this real structure is equivalent to $gH \mapsto \sigma_s(g)H$ which has a non-empty real locus. Therefore this real structure corresponds to a real form with real points. Similarly, there is one $(G,\sigma_c)$-equivariant real structure which corresponds to a real form with real points. Since $ \p^2 \times \p^1 \simeq \p(R_2) \times \p(R_1)$ has only one real form with real points, which is rational, the previous two equivariant real structures correspond to the same rational real form. Moreover, $ \p^2 \times \p^1$ has a unique real form with no real points, which is therefore not equivariant under an action of a real form of $\SL_{2,\C}$.
\end{proof}

Let $r,s \in \mathbb{N}$, we denote by $Q^{r,s}$ the smooth real quadric hypersurface of equation $\{x_1^2 +...+ x_r^2 - x_{r+1}^2 -...- x_{r+s}^2= 0\} \subset \p_{\R}^{r+s}$.

\begin{proposition}
    The complex quadric $Q_3 \subset \p^4$ has three real forms : \begin{enumerate}[leftmargin = 7mm]
        \item[-] $Q^{5,0}$, which has no real points, and is equivariant both under an action of $\SL_{2,\R}$ and of $\Spin_{3,\R}$.
        \item[-] $Q^{4,1}$, which is rational, and is equivariant under an action of $\Spin_{3,\R}$ but not of $\SL_{2,\R}$.
        \item [-] $Q^{3,2}$, which is rational, and is equivariant under an action of $\SL_{2,\R}$ but not of $\Spin_{3,\R}$.
    \end{enumerate}
\end{proposition}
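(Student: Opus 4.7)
The plan is to combine the classical classification of real forms of smooth quadric threefolds with the enumeration of equivariant real structures given by Proposition~\ref{prop 4.1}, then match each equivariant structure to a classical real form via its real locus. By Witt's theorem, the smooth real quadric hypersurfaces of $\p^4_\R$ are classified up to isomorphism by the signature of the defining form, giving exactly three non-isomorphic $3$-dimensional quadrics: $Q^{5,0}$, $Q^{4,1}$, and $Q^{3,2}$ (other signatures differ from these only by multiplying the form by $-1$). The first has no real points and is therefore not rational, whereas $Q^{4,1}$ and $Q^{3,2}$ both admit real points and are rational via stereographic projection from any real point.

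I will then apply Proposition~\ref{prop 4.1} to the Luna--Vust diagram of $Q_3$ (see the \hyperref[appendix: A]{Appendix}) to list the representatives of the equivalence classes of effective $(G,\sigma)$-equivariant real structures, for $\sigma \in \{\sigma_s, \sigma_c\}$. For each such $\mu$, I will compute the intersection of the real locus with the open orbit $\SL_2 \subset Q_3$ directly from the cocycle. By Lemma~\ref{lemma: points}, emptiness of this intersection is equivalent to emptiness of the full real locus, which identifies the corresponding real form as $Q^{5,0}$; otherwise the real form is rational, and we distinguish between $Q^{4,1}$ (compact real locus) and $Q^{3,2}$ (non-compact real locus) by inspecting the topology of the real locus of the open orbit. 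Concretely, the trivial $\sigma_s$-structure $g \mapsto \sigma_s(g)$ yields the real form $\{ad-bc=e^2\}$ over $\R$, whose quadratic form diagonalizes to signature $(2,3)$, giving $Q^{3,2}$; the structure $g \mapsto \sigma_c(g)$ has real locus $\SU(2) \simeq S^3$ on the open orbit and so yields $Q^{4,1}$; and the structure $g \mapsto -\sigma_c(g)$ has empty real locus because the defining equation ${}^t\bar g \cdot g = -I$ forces $|a|^2 + |c|^2 = -1$, which is impossible, so it yields $Q^{5,0}$.

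The main obstacle will be to exhibit a $(G,\sigma_s)$-equivariant real structure whose real locus is empty, which is what certifies that $Q^{5,0}$ is also $\SL_{2,\R}$-equivariant. I expect this to follow from inspection of the explicit cocycles in Proposition~\ref{prop 4.1} for the diagram of $Q_3$, with the empty real locus being verified by an algebraic obstruction analogous to the one given above for $-\sigma_c$. Once all the equivariant real structures are matched to the three classical real forms, the equivariance pattern stated in the proposition ($Q^{5,0}$ under both real forms of $G$; $Q^{4,1}$ only under $\Spin_{3,\R}$; $Q^{3,2}$ only under $\SL_{2,\R}$) reads off the Main Theorem table, and the rationality claims follow from the classical description.
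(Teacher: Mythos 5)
Your overall strategy (classify the three classical real forms, list the effective equivariant real structures, and match them by computing real loci on the open orbit) is the same as the paper's, and your identifications of $g \mapsto \sigma_s(g)$ with $Q^{3,2}$ (via the signature of $ad-bc-e^2$) and of $g \mapsto \sigma_c(g)$ with $Q^{4,1}$ (via the real locus $\SU(2)$) are more direct than the paper's, which instead invokes \cite[Lemma 1.19]{Nak89} and the non-existence of an embedding of $\mathrm{PSU}_2$ into $\SO(3,2)$. The gap is in the input: you take the equivariant real structures from Proposition \ref{prop 4.1}, i.e.\ you treat the open orbit of $Q_3$ as $\SL_2$ with trivial stabilizer, whereas the paper treats $Q_3$ as model (a2) of the Appendix, a minimal completion of $\PGL_2 = \SL_2/\{\pm I\}$, and takes the structures from Proposition \ref{prop 4.3}. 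This is not cosmetic. For model (a1) there is exactly one equivalence class of $(G,\sigma_s)$-equivariant real structures, namely $g \mapsto \sigma_s(g)$: the twist $g \mapsto -\sigma_s(g)$, which you hope will play the role that $g \mapsto -\sigma_c(g)$ plays on the $\sigma_c$ side, has nonempty real locus (for instance $\mathrm{diag}(i,-i)$ is fixed) and is in fact equivalent to $g \mapsto \sigma_s(g)$, because $H^1(\Gamma,\SL_2(\C))$ for the $\sigma_s$-twisted action is trivial and $\Aut^G(X)=\SL_2$ for this model. So along your route there is no $(G,\sigma_s)$-structure with empty real locus, the first item of the statement ($Q^{5,0}$ is $\SL_{2,\R}$-equivariant) cannot be obtained, and you produce only three equivariant structures rather than the four that the Main Theorem's table for $Q_3$ requires.

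The missing ingredient is the nontrivial stabilizer. In Proposition \ref{prop 4.3}, model (a2), the coset $eH$ with $e=\begin{bmatrix} 0 & 1 \\ -1 & 0\end{bmatrix}$ is a $\sigma_s$-cocycle in $N_G(H)/H \simeq \PGL_2$ that is not a coboundary (it represents the nontrivial class of $H^1(\Gamma,\PGL_2(\C))$, i.e.\ the quaternions), and the corresponding structure $gH \mapsto \sigma_s(g)eH$ has empty real locus: the condition $\sigma_s(g)e=\pm g$ forces the first row of $g$ to vanish, contradicting $\det g=1$. Combined with Lemma \ref{lemma: points}, this is exactly what certifies that $Q^{5,0}$ is $\SL_{2,\R}$-equivariant. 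Note that your reading of the open orbit as $\{e \neq 0\} \simeq \SL_2$ is a natural one given the description of $Q_3$ in Section \ref{section: setting}, but it is incompatible with the combinatorial model the paper actually uses for $Q_3$ (diagram (a2), Proposition \ref{prop 4.3}); until that identification is settled in favour of $\PGL_2$, the proposal only proves the $\Spin_{3,\R}$ half of the first bullet.
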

\begin{proof}
    It is classical that $Q_3$ has three real forms: $Q^{3,2}$ and $Q^{4,1}$, which are both rational, and $Q^{5,0}$, which has no real points (see e.g. \cite[Proposition 10.2]{TZ}). The equivariant real structures on $Q_3$ are given in Proposition \ref{prop 4.3} (model (a2)). The real structures corresponding to $g \mapsto \sigma_c(g)eH$ and to $g \mapsto \sigma_s(g)eH$ have an empty real locus and therefore correspond to the real form $Q^{5,0}$. Moreover, by \cite[Lemma 1.19]{Nak89}, the real form $Q^{4,1}$ is not equivariant under any action of $\SL_{2,\R}$. Hence, the real structure $g \mapsto \sigma_s(g)H$ corresponds to the real form $Q^{3,2}$. Furthermore, the group $\text{PSU}_2$ cannot be embedded into $\SO(3,2)$, which is the automorphism group of $Q^{3,2}$. Therefore, the real structure given by $g \mapsto \sigma_c(g)H$ corresponds to the real form $Q^{4,1}$.
\end{proof}

We now consider the decomposable $\p^1$-bundles $Y(a,b)\rightarrow \p^1 \times \p^1$.

\begin{proposition} Let $a$ and $b$ be two integers such that $a > b, a+b \leq 0$.
    \begin{enumerate}[leftmargin = 7mm]
        \item[-] If $a+b$ is odd, then $Y(a,b)$ has three real forms. One is rational (the trivial one) and equivariant under an action of $\SL_{2,\R}$ but not of $\Spin_{3,\R}$, while the other two have no real points and are not equivariant under an action of a real form of $\SL_{2,\C}$.
        \item[-] If $a+b \neq 0$  and $a+b$ is even, then $Y(a,b)$ has three real forms. The trivial one, which is equivariant under an action of $\SL_{2,\R}$ but not of $\Spin_{3,\R}$, and two without real points: one is equivariant under an action of $\Spin_{3,\R}$ but not of $\SL_{2,\R}$, while the other one is not equivariant under an action of a real form of $\SL_{2,\C}$.
        \item[-] If $a+b = 0$, then $Y(a,b)$ has five real forms. Two that are equivariant under an action of $\SL_{2,\R}$ but not of $\Spin_{3,\R}$, a rational one and one without real points. Two that are equivariant under an action of $\Spin_{3,\R}$ but not of $\SL_{2,\R}$, a rational one and one without real points. And one without real points that is not equivariant under an action of a real form of $\SL_{2,\C}$.
    \end{enumerate}
\end{proposition}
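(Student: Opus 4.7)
The plan is to split on the parity of $a+b$ (equivalently the parity of $k := -a-b$, the order of the stabilizer $A_k$ of the open orbit of $Y(a,b)$) and on whether $a+b = 0$. In each case I would identify the Luna-Vust diagram of $Y(a,b)$ via the \hyperref[appendix: A]{Appendix}, count the equivariant real forms by applying the relevant classification from Section \ref{section: forms}, and then appeal to an external classification of real forms of $Y(a,b)$ from \cite{TZ} to account for those real forms which are not equivariant.

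First consider the case $a+b = 0$, where the open orbit is $\SL_2$ itself and $Y(a,-a)$ corresponds to model (e1). Proposition \ref{prop 4.1} yields four equivalence classes of effective equivariant real structures: $\mu_{Id}$ and $\mu_f$ for $\sigma = \sigma_s$, and $\mu_{Id}$ and $-\mu_{Id}$ for $\sigma = \sigma_c$. Using the descriptions of the real loci of these structures on the open orbit from \cite[Appendix C]{MJT20}, combined with Lemma \ref{lemma: points}, I would verify that exactly one class in each pair produces a rational real form (its $\R$-points being Zariski dense in the open orbit), while the other yields a real form with empty real locus. This gives the four equivariant real forms claimed: two rational and two without real points, distributed between $\SL_{2,\R}$ and $\Spin_{3,\R}$ as stated.

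When $a+b$ is odd, $H = A_k$ has odd order $k \geq 1$; the diagram of $Y(a,b)$ is (c1) if $k = 1$ (handled by Proposition \ref{prop 4.1}) or one of the odd-$k$ models of Proposition \ref{prop 4.4} if $k \geq 3$. In either subcase there is no $(G,\sigma_c)$-equivariant real structure and a unique $(G,\sigma_s)$-equivariant real structure, namely the trivial one, giving the sole equivariant real form, which is rational and $\SL_{2,\R}$-equivariant. When $a+b$ is even and nonzero, $H = A_k$ has even order and Propositions \ref{prop 4.3} (for $k = 2$) and \ref{prop 4.5} (for $k \geq 4$) each produce exactly one equivariant real structure for $\sigma_s$ and one for $\sigma_c$. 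The $\sigma_s$-structure gives the trivial rational real form; for the $\sigma_c$-structure, the induced real structure on $\SL_2/A_k$ has empty real locus by \cite[Appendix C]{MJT20}, so Lemma \ref{lemma: points} forces the whole real form to have no real points and it is only $\Spin_{3,\R}$-equivariant.

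The main obstacle is to determine the total number of real forms of $Y(a,b)$ in order to identify those which are not equivariant under any real form of $\SL_{2,\C}$. For this I would invoke the classification of real forms of decomposable $\p^1$-bundles over $\p^1 \times \p^1$ established in \cite{TZ}, parallel to the references already used in this section for $X_k(n,m)$, $Z(r)$, and $W(k)$. Subtracting the equivariant count above from the total provided there should yield, in the three cases, exactly $2$, $1$, and $1$ additional real forms respectively, all of which have empty real locus by \cite{TZ}. Combined with the equivariant enumeration this produces the totals of $3$, $3$, and $5$ real forms claimed.
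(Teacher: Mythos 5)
Your overall strategy is the one the paper uses: read off the equivariant real forms from the classifications of Section \ref{section: forms} and then compare with the full (non-equivariant) classification of real forms of $Y(a,b)$ in \cite{TZ} to isolate the non-equivariant ones. However, your treatment of the case $a+b=0$ rests on a wrong identification of the model. The order of the generic isotropy group of $Y(a,b)$ is $a-b$ (as the Appendix makes explicit: $Y(-n,-n-k)$ is model (d3)/(d4) and $Y(\tfrac{k}{2},-\tfrac{k}{2})$ is model (h4), both completions of $\SL_2/A_k$), so $Y(a,-a)$ is model (h4) with open orbit $\SL_2/A_{2a}$ --- it is \emph{not} model (e1), which is $W(1)=\PGL_3/B$, a $\p^1$-bundle over $\p^2$ of Picard rank $2$, not a bundle over $\p^1\times\p^1$. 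Consequently the relevant input is Proposition \ref{prop 4.5}, which gives \emph{three} equivalence classes of $(G,\sigma_s)$-structures and \emph{three} of $(G,\sigma_c)$-structures on (h4), not the four classes you extract from Proposition \ref{prop 4.1} for (e1). Your computation of real loci and the resulting count of equivariant real forms in this case is therefore carried out on the wrong set of structures, and the genuinely nontrivial step --- explaining how the six pairwise inequivalent equivariant structures on (h4) realize only four distinct real forms, so that exactly one of the five real forms coming from \cite{TZ} fails to be equivariant --- is absent.

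A smaller slip of the same kind: when $a+b$ is odd and $a-b=1$ the model is (j1) (namely $Y(-n+1,-n)$), not (c1), which is nonprojective and is not a $Y(a,b)$ at all. This one happens to be harmless, since (j1) and (c1) both admit a unique $(G,\sigma_s)$-structure and no $(G,\sigma_c)$-structure by Proposition \ref{prop 4.1}, so your conclusion in that case survives. The cases $a+b$ even and nonzero (models (e2) and (d4)) are handled correctly. To repair the proof you should redo the $a+b=0$ case with model (h4) and Proposition \ref{prop 4.5}, and make explicit the passage from equivalence classes of equivariant real structures to isomorphism classes of real forms before subtracting from the total furnished by \cite[Propositions 5.7 and 5.12, and Remark 5.8]{TZ}.
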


\begin{proof}
    We use \cite[Propositions 5.7 and 5.12, and Remark 5.8 ]{TZ} and its proof to determine all the real forms of $Y(a,b)$ and which ones are rational or without real points. Then, similarly as in the previous cases, we deduce which ones are equivariant under an action of $\SL_{2,\R}$ (respectively of $\Spin_{3,\R}$) from the results of Section \ref{section: forms}.
\end{proof}

It remains to treat the case of the models $Y_0(k)$ with $k \geq 2$ even.  The variety $Y_0(2)$ corresponds to $\p^1 \times \p^1 \times \p^1 \simeq \p(R_1) \times \p(R_1) \times \p(R_1)$. This case was studied in \cite[Example 4.11]{MJT20} and \cite[Lemma 5.9]{TZ}. Recall that $Y_0(k)$ is not minimal.

\begin{proposition}
    Let $k \geq 6$ be an even integer. Then $Y_0(k)$ has one rational $\SL_{2,\R}$-real form, one rational $\Spin_{3,\R}$-real form, one $\SL_{2,\R}$-real form with no real points and one $\Spin_{3,\R}$-real form with no real points. Moreover, $Y_0(k)$ has no other real form.
\end{proposition}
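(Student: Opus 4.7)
The plan is to apply Proposition \ref{prop 4.5} to the diagram of $Y_0(k)$ in the Appendix (which, for $k\ge 6$ even, corresponds to a minimal model with open orbit $\SL_2/A_k$) to enumerate the equivalence classes of equivariant real structures, and then to match these classes with the claimed four real forms. First, I would list the representatives $\mu$ produced by Proposition \ref{prop 4.5} for both $\sigma=\sigma_s$ and $\sigma=\sigma_c$. For each such $\mu$, I would restrict to the open orbit $G/A_k$ and use \cite[Appendix C]{MJT20} to decide whether the real locus of the restriction is empty; together with Lemma \ref{lemma: points}, this classifies each equivariant real structure as corresponding to a real form which is either rational or without real points.

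Second, I would establish rationality for the forms with non-empty real locus. Using the Schwarzenberger description $Y_0(k)=\p(G_0(k))$ with the exact sequence $0\to\O\to G_0(k)\to\O_{\p^1\times\p^1}(a,b)\to 0$, the canonical section coming from $\O\hookrightarrow G_0(k)$ provides, after its complement is taken, a $G$-stable affine open subset of $Y_0(k)$ that is rational over $\R$ and carries each $\mu$ in a compatible way. Matching the equivariant structures against this rationality statement, I expect exactly one $\sigma_s$-structure and one $\sigma_c$-structure to give rational real forms, while the remaining structures (after identifying those that produce the same real algebraic variety but differ as $F$-varieties) give one real form per $\sigma$ without real points.

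The final step is to rule out non-equivariant real forms of $Y_0(k)$. For this I would appeal to the analogue of \cite[Propositions 5.7 and 5.12]{TZ} adapted to projectivizations of Schwarzenberger bundles (the same source already used in the preceding proposition on $Y(a,b)$), or argue directly: the identity component of $\Aut(Y_0(k))$ contains $\PGL_2$ acting through the $\SL_2$-structure on the base $\p^1\times\p^1$ and its lift to the Schwarzenberger bundle, so any real structure on $Y_0(k)$ restricts to a real group structure on this subgroup and thus produces a real form of $\SL_{2,\C}$ acting on the corresponding real form of $Y_0(k)$; hence every real form is equivariant. The main obstacle is reconciling the number of equivalence classes produced by Proposition \ref{prop 4.5} with the stated count of four real forms, which requires carefully distinguishing isomorphism of $F$-forms as $F$-varieties from isomorphism as plain real algebraic varieties, and verifying that the two rational forms (one for each $\sigma$) are non-isomorphic because their real loci carry transitive actions of the distinct real forms $\SL_2(\R)$ and $\SU(2)$.
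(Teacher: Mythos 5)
Your overall skeleton agrees with the paper's: enumerate the four equivariant real structures via Proposition \ref{prop 4.5}, use \cite[Appendix C]{MJT20} together with Lemma \ref{lemma: points} to see that $\mu_2$ and $\mu_4$ give forms without real points, and rule out non-equivariant forms through the automorphism group. But the two steps where real work is needed are handled differently from the paper, and both versions you propose have gaps.

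First, the rationality of the two remaining forms. Your argument via the canonical sub-bundle $\O\hookrightarrow G_0(k)$ is under-justified: the complement of the corresponding $G$-stable divisor in $\p(G_0(k))$ is not affine (it is an affine bundle over $\p^1\times\p^1$, which still surjects onto a projective surface), and even granting that it is $\mu$-stable, rationality over $\R$ of its quotient requires knowing that the induced real form of the base $\p^1\times\p^1$ is rational --- which you do not address, and which is precisely the delicate point for the $\sigma_c$-structure. The paper instead uses the $\PGL_2$-equivariant birational map $\psi\colon Y_0(k)\dashrightarrow Y(\tfrac{k}{2},-\tfrac{k}{2})$ of \cite[Lemma 3.5.5]{BFT23} (a composition of blow-ups/blow-downs of curves): the rational real forms of $Y(\tfrac{k}{2},-\tfrac{k}{2})$ are already known from the preceding proposition, the transported involution $\psi^{-1}\circ\mu\circ\psi$ is shown to be regular because it preserves the skeleton diagram (\cite[Proposition 3.9]{MJT21}), and rationality is a birational invariant. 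If you want to keep your route, you must verify that the real form of the base induced by each of $\mu_1,\mu_3$ is a rational real form of the quadric surface and that the affine bundle descends.

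Second, the exclusion of non-equivariant forms. Your ``direct'' argument only uses that $\Aut^\circ(Y_0(k))$ \emph{contains} $\PGL_2$; that is not enough, since a real structure conjugates $\Aut^\circ$ into itself but need not preserve a proper subgroup. What is actually needed, and what the paper uses, is the equality $\Aut^\circ(Y_0(k))\simeq\PGL_{2,\C}$ from \cite[Lemma 3.5.5]{BFT23} (valid precisely for $k\geq 6$, which is why $k=2,4$ are excluded), combined with \cite[Proposition 2.20]{TZ} to deduce that $\Aut^\circ$ of any real form is $\PGL_{2,\R}$ or $\SO_{3,\R}$, and then the $2{:}1$ covers $\SL_{2,\R}\to\PGL_{2,\R}$ and $\Spin_{3,\R}\to\SO_{3,\R}$ (and the absence of cross homomorphisms) to get equivariance and, as you correctly note at the end, the pairwise distinctness of the $\SL_{2,\R}$- and $\Spin_{3,\R}$-forms.
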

\begin{proof} 
Those varieties appear as $\hat{S_b}$ in \cite{BFT23} with $b = k/2$, and by Lemma 3.5.5 of loc.cit. we have that $\Aut^{\circ}(\hat{S_b}) \simeq \PGL_{2,\C}$. Hence, by \cite[Proposition 2.20]{TZ}, if $X$ is a real form of $\hat{S_b}$, then $\Aut^{\circ}(X)$ is a real form of $\PGL_{2,\C}$, i.e. $\Aut^{\circ}(X) \simeq \PGL_{2,\R}$ or $\Aut^{\circ}(X) \simeq \SO_{3,\R}$. However, there is a nontrivial homomorphism from $\SL_{2,\R}$ to $\PGL_{2,\R}$, the natural $2:1$ cover, but none to $\SO_{3,\R}$. Similarly, there is a nontrivial homomorphism from $\Spin_{3,\R}$ to $\SO_{3,\R}$, the natural $2:1$ cover, but none to $\PGL_{2,\R}$. Therefore, $X$ is equivariant under an action of either $\SL_{2,\R}$ or $\Spin_{3,\R}$.

Recall by Proposition $\ref{prop 4.5}$ that there are four equivariant real structures on $Y_0(k)$: $\mu_1 : gH \mapsto \sigma_s(g) \begin{bmatrix}
    e^{-i\pi x} & 0 \\ 0 & e^{i\pi x}
\end{bmatrix}H$, $\mu_2 : gH \mapsto \sigma_s(g) \begin{bmatrix}
    0 & i|\alpha| \\ i|\alpha|^{-1} & 0
\end{bmatrix}H$, $\mu_3: gH \mapsto \sigma_c(g) \begin{bmatrix}
    0 & e^{-i\pi x} \\ -e^{i\pi x} & 0
\end{bmatrix}H$ and $\mu_4: gH \mapsto \sigma_c(g) \begin{bmatrix}
    i|\alpha|^{-1} & 0 \\ 0 & i|\alpha|
\end{bmatrix}H$. By Lemma \ref{lemma: points}, $\mu_2$ and $\mu_4$ correspond to real forms with no real points.

Let us prove that the real form corresponding to the real structure $\mu_1$ is rational. By \cite[Lemma 3.5.5]{BFT23}, we have the following commutative diagram
\[
    \begin{tikzcd}
    & X \ar[dl, swap, "\epsilon"] \ar[dr, "\eta"] &  \\
     Y_0(k) \ar[rr, dashed, "\psi"] \ar[dr] & & Y(\frac{k}{2},-\frac{k}{2}) \ar[dl]\\
     & \p^1 \times \p^1 &
  \end{tikzcd}
    \]
where all maps are $\PGL_2$-equivariant, and the morphisms $\epsilon$ ans $\eta$ are blow-ups of curves. The morphism $\psi^{-1} \circ \mu \circ \psi$ is a birational antiregular involution on $Y_0(k)$. By \cite[Proposition 3.9]{MJT21}, this map is actually an antiregular automorphism because it preserves the skeleton diagram associated to $Y_0(k)$. Therefore, since $\mu$ corresponds to a rational real form of $Y(\frac{k}{2},-\frac{k}{2})$, $\mu$ corresponds to a rational real form of $Y_0(k)$; this real form is the trivial one. Similarly, we prove that the real form corresponding to $\mu_3$ is rational.
\end{proof}
\newpage

\setcounter{secnumdepth}{0}
\section{Appendix: Diagrams of minimal smooth completions of $\SL_2/A_k$}\label{appendix: A}

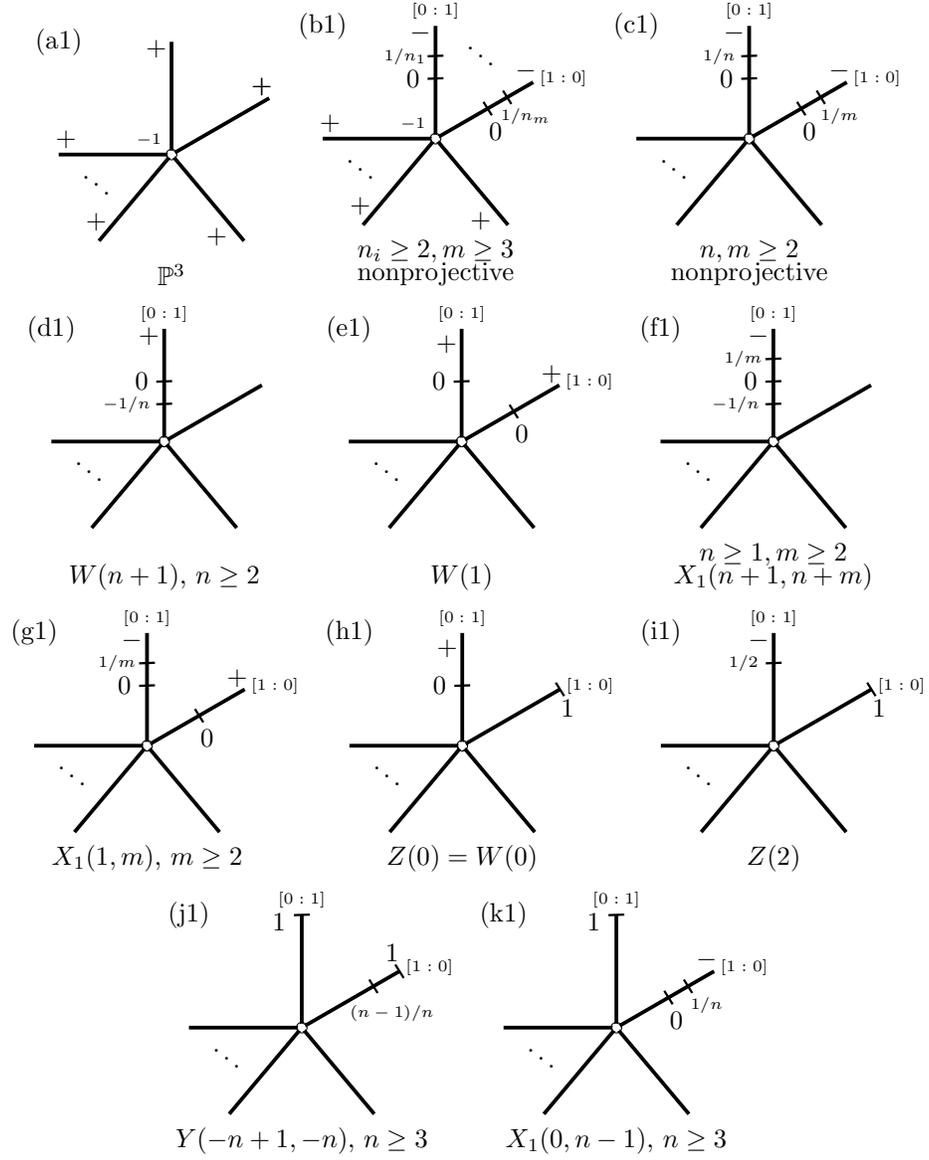
\begin{figure}[ht]
\begin{tikzpicture}[scale=1]
		\path (0,0) coordinate (origin);
		\draw (0,0) circle (2pt) ;
		\path (90:1.5cm) coordinate (P0);
		\path (180:1.5cm) coordinate (P1);
		\path (230:1.5cm) coordinate (P2);
		\path (310:1.5cm) coordinate (P3);
		\path (30:1.5cm) coordinate (P4);
		\draw[line width=0.5mm] (origin) -- (P0) (origin) -- (P1) (origin) -- (P2)(origin) -- (P3) (origin) -- (P4);
		\fill[white] (0,0) circle (1.5pt) ;
		\node at (-1,-.3){$\ddots$};
		\node at (1.2,0.9){$+$};
		\node at (-1.4,0.2){$+$};
		\node at (-1,-.9){$+$};
		\node at (1.2,0.9){$+$};
		\node at (-.2,1.4){$+$};
		\node at (.6,-1.1){$+$};
		\node at (-.3,.2){\tiny{$-1$}};
		\node at (-1.5,1.5){(a1)};
		\node at (0,-1.6){$\p^3$};
\end{tikzpicture}	
\begin{tikzpicture}[scale=1]
		\draw (0,0) circle (2pt) ;
		\path (90:.8cm) coordinate (Q0);
		\path (90:1.1cm) coordinate (Q1);
		\path (30:.8cm) coordinate (Q4);
		\path (30:1.1cm) coordinate (Q5);
		\node at (0,1.7){\tiny{$[0:1]$}};
		\node at (1.7,0.8){\tiny{$[1:0]$}};
		\draw[line width=0.3mm] (Q0) -- +(182:3pt)  (Q0)-- +(2:3pt); ;
		\draw[line width=0.3mm] (Q1) -- +(182:3pt)  (Q1)-- +(2:3pt); ;
		\draw[line width=0.5mm] (origin) -- (P0) (origin) -- (P1) (origin) -- (P2)(origin) -- (P3) (origin) -- (P4);
		\fill[white] (0,0) circle (1.5pt) ;
		\node at (-1,-.3){$\ddots$};
		\node at (.6,1.2){$\ddots$};
		\node at (-.3,.8){$0$};
		\node at (-.2,1.4){$-$};
		\node at (-.4,1.1){\tiny{$1/n_1$}};
		\draw[line width=0.3mm] (Q4) -- +(122:3pt)  (Q4)-- +(302:3pt); ; 
		\draw[line width=0.3mm] (Q5) -- +(122:3pt)  (Q5)-- +(302:3pt); ;
		\node at (1.2,0.9){$-$};
		\node at (1.2,0.3){\tiny{$1/n_m$}};	
		\node at (.8,.1){$0$};
		\node at (-1.4,0.2){$+$};
		\node at (-1,-.9){$+$};
		\node at (.6,-1.1){$+$};
		\node at (-.3,.2){\tiny{$-1$}};
		\node at (-1.5,1.5){(b1)};
		\node at (0,-1.5){$n_i \geq 2, m \geq 3$};
        \node at (0,-1.8){nonprojective};
\end{tikzpicture}
\begin{tikzpicture}[scale=1]
	\draw (0,0) circle (2pt) ;
	\node at (0,1.7){\tiny{$[0:1]$}};
	\node at (1.7,0.8){\tiny{$[1:0]$}};
	\draw[line width=0.3mm] (Q0) -- +(182:3pt)  (Q0)-- +(2:3pt); ;
	\draw[line width=0.3mm] (Q1) -- +(182:3pt)  (Q1)-- +(2:3pt); ;
	\draw[line width=0.5mm] (origin) -- (P0) (origin) -- (P1) (origin) -- (P2)(origin) -- (P3) (origin) -- (P4);
	\fill[white] (0,0) circle (1.5pt) ;
	\node at (-1,-.3){$\ddots$};
	\node at (-.3,.8){$0$};
	\node at (-.2,1.4){$-$};
	\node at (-.4,1.1){\tiny{$1/n$}};
	\draw[line width=0.3mm] (Q4) -- +(122:3pt)  (Q4)-- +(302:3pt); ; 
	\draw[line width=0.3mm] (Q5) -- +(122:3pt)  (Q5)-- +(302:3pt); ;
	\node at (1.2,0.9){$-$};
	\node at (1.2,0.3){\tiny{$1/m$}};	
	\node at (.8,.1){$0$};
	\node at (-1.5,1.5){(c1)};
	\node at (0,-1.5){$n,m \geq 2$};
    \node at (0,-1.8){nonprojective};
 
\end{tikzpicture}

\begin{tikzpicture}[scale=1]
	\draw (0,0) circle (2pt) ;
	\node at (0,1.7){\tiny{$[0:1]$}};
	\node at (1.8,0.8){};
	\path (90:.5cm) coordinate (Q3);
	\draw[line width=0.3mm] (Q0) -- +(182:3pt)  (Q0)-- +(2:3pt); ;
	\draw[line width=0.3mm] (Q3) -- +(182:3pt)  (Q3)-- +(2:3pt); ;
	\draw[line width=0.5mm] (origin) -- (P0) (origin) -- (P1) (origin) -- (P2)(origin) -- (P3) (origin) -- (P4);
	\fill[white] (0,0) circle (1.5pt) ;
	\node at (-1,-.3){$\ddots$};
	\node at (-.3,.8){$0$};
	\node at (-.2,1.4){$+$};
	\node at (-.5,0.5){\tiny{$-1/n$}};
	\node at (-1.5,1.5){(d1)};
	\node at (0,-1.8){$W(n+1)$, $n \geq 2$};
\end{tikzpicture}
\begin{tikzpicture}[scale=1]
	\draw (0,0) circle (2pt) ;
	\node at (0,1.7){\tiny{$[0:1]$}};
	\node at (1.7,0.8){\tiny{$[1:0]$}};
	\draw[line width=0.3mm] (Q0) -- +(182:3pt)  (Q0)-- +(2:3pt); ;
	\draw[line width=0.5mm] (origin) -- (P0) (origin) -- (P1) (origin) -- (P2)(origin) -- (P3) (origin) -- (P4);
	\fill[white] (0,0) circle (1.5pt) ;
	\node at (-1,-.3){$\ddots$};
	\node at (-.3,.8){$0$};
	\node at (-.2,1.3){$+$};
	\draw[line width=0.3mm] (Q4) -- +(122:3pt)  (Q4)-- +(302:3pt); ; 
	\node at (.8,.1){$0$};
	\node at (1.2,0.9){$+$};
	\node at (-1.5,1.5){(e1)};
	\node at (0,-1.8){$W(1)$};
\end{tikzpicture}
\begin{tikzpicture}[scale=1]
	\draw (0,0) circle (2pt) ;
	\node at (0,1.7){\tiny{$[0:1]$}};
	\node at (1.8,0.8){};
	\path (90:.5cm) coordinate (Q3);
	\draw[line width=0.3mm] (Q0) -- +(182:3pt)  (Q0)-- +(2:3pt); ;
	\draw[line width=0.3mm] (Q3) -- +(182:3pt)  (Q3)-- +(2:3pt); ;
	\draw[line width=0.5mm] (origin) -- (P0) (origin) -- (P1) (origin) -- (P2)(origin) -- (P3) (origin) -- (P4);
	\fill[white] (0,0) circle (1.5pt) ;
	\draw[line width=0.3mm] (Q1) -- +(182:3pt)  (Q1)-- +(2:3pt); ;
	\node at (-1,-.3){$\ddots$};
	\node at (-.3,.8){$0$};
	\node at (-.2,1.4){$-$};
	\node at (-.5,0.5){\tiny{$-1/n$}};
	\node at (-.4,1.1){\tiny{$1/m$}};
	\node at (-1.5,1.5){(f1)};
	\node at (0,-1.5){$n \geq 1,m \geq 2$};
    \node at (0,-1.8){$X_1(n+1,n+m)$};
\end{tikzpicture}
\begin{tikzpicture}[scale=1]
		\draw (0,0) circle (2pt) ;
		\node at (0,1.7){\tiny{$[0:1]$}};
		\node at (1.7,0.8){\tiny{$[1:0]$}};
		\draw[line width=0.3mm] (Q0) -- +(182:3pt)  (Q0)-- +(2:3pt); ;
		\draw[line width=0.3mm] (Q1) -- +(182:3pt)  (Q1)-- +(2:3pt); ;
		\draw[line width=0.5mm] (origin) -- (P0) (origin) -- (P1) (origin) -- (P2)(origin) -- (P3) (origin) -- (P4);
		\fill[white] (0,0) circle (1.5pt) ;
		\node at (-1,-.3){$\ddots$};
		\node at (-.3,.8){$0$};
		\node at (-.2,1.4){$-$};
		\node at (-.4,1.1){\tiny{$1/m$}};
		\draw[line width=0.3mm] (Q4) -- +(122:3pt)  (Q4)-- +(302:3pt); ; 
		\node at (.8,.1){$0$};
		\node at (1.2,0.9){$+$};
		\node at (-1.5,1.5){(g1)};
		\node at (0,-1.5){$X_1(1,m)$, $m \geq 2$};
\end{tikzpicture}
	\begin{tikzpicture}[scale=1]
		\draw (0,0) circle (2pt) ;
		\node at (0,1.7){\tiny{$[0:1]$}};
		\node at (1.7,0.8){\tiny{$[1:0]$}};
		\draw[line width=0.3mm] (Q0) -- +(182:3pt)  (Q0)-- +(2:3pt); ;
		\draw[line width=0.5mm] (origin) -- (P0) (origin) -- (P1) (origin) -- (P2)(origin) -- (P3) (origin) -- (P4);
		\fill[white] (0,0) circle (1.5pt) ;
		\node at (-1,-.3){$\ddots$};
		\node at (-.3,.8){$0$};
		\node at (-.2,1.3){$+$};
		\draw[line width=0.3mm] (P4) -- +(122:3pt)  (P4)-- +(302:3pt); 
		\node at (1.4,.5){$1$};
		\node at (-1.5,1.5){(h1)};
		\node at (0,-1.5){$Z(0)=W(0)$};
	\end{tikzpicture}
	\begin{tikzpicture}[scale=1]
			\draw (0,0) circle (2pt) ;
			\node at (0,1.7){\tiny{$[0:1]$}};
			\node at (1.7,0.8){\tiny{$[1:0]$}};
			\draw[line width=0.3mm] (Q1) -- +(182:3pt)  (Q1)-- +(2:3pt); ;
			\draw[line width=0.5mm] (origin) -- (P0) (origin) -- (P1) (origin) -- (P2)(origin) -- (P3) (origin) -- (P4);
			\fill[white] (0,0) circle (1.5pt) ;
			\node at (-1,-.3){$\ddots$};
			\node at (-.4,1.1){\tiny{$1/2$}};
			\node at (-.2,1.4){$-$};
			\draw[line width=0.3mm] (P4) -- +(122:3pt)  (P4)-- +(302:3pt); 
			\node at (1.4,.5){$1$};
			\node at (-1.5,1.5){(i1)};
			\node at (0,-1.5){$Z(2)$};
\end{tikzpicture}
\begin{tikzpicture}[scale=1]
	\draw (0,0) circle (2pt) ;
	\node at (0,1.7){\tiny{$[0:1]$}};
	\node at (1.7,0.8){\tiny{$[1:0]$}};
	\draw[line width=0.3mm] (P0) -- +(182:3pt)  (P0)-- +(2:3pt); ;
	\draw[line width=0.5mm] (origin) -- (P0) (origin) -- (P1) (origin) -- (P2)(origin) -- (P3) (origin) -- (P4);
	\fill[white] (0,0) circle (1.5pt) ;
	\node at (-1,-.3){$\ddots$};
	\node at (-.3,1.4){$1$};
	\draw[line width=0.3mm] (Q5) -- +(122:3pt)  (Q5)-- +(302:3pt);
	\draw[line width=0.3mm] (P4) -- +(122:3pt)  (P4)-- +(302:3pt); 
	\node at (1.2,1){$1$};
	\node at (1.2,0.2){\tiny{$(n-1)/n$}};	
	\node at (-1.5,1.5){(j1)};
	\node at (0,-1.5){$Y(-n+1,-n)$, $n \geq 3$};
\end{tikzpicture}
\begin{tikzpicture}[scale=1]
	\draw (0,0) circle (2pt) ;
	\node at (0,1.7){\tiny{$[0:1]$}};
	\node at (1.7,0.8){\tiny{$[1:0]$}};
	\draw[line width=0.3mm] (P0) -- +(182:3pt)  (P0)-- +(2:3pt); ;
	\draw[line width=0.5mm] (origin) -- (P0) (origin) -- (P1) (origin) -- (P2)(origin) -- (P3) (origin) -- (P4);
	\fill[white] (0,0) circle (1.5pt) ;
	\node at (-1,-.3){$\ddots$};
	\node at (-.3,1.4){$1$};
	\draw[line width=0.3mm] (Q4) -- +(122:3pt)  (Q4)-- +(302:3pt); 
	\draw[line width=0.3mm] (Q5) -- +(122:3pt)  (Q5)-- +(302:3pt); ;
	\node at (1.2,0.9){$-$};
	\node at (1.2,0.3){\tiny{$1/n$}};	
	\node at (.8,.1){$0$};
	\node at (-1.5,1.5){(k1)};
	\node at (0,-1.5){$X_1(0,n-1)$, $n \geq 3$};
\end{tikzpicture}
\caption{List of diagrams of the minimal smooth completions of $\SL_2$.}
\end{figure}

\newpage

\begin{figure}[ht]
	\begin{tikzpicture}[scale=1]
		\path (0,0) coordinate (origin);
		\draw (0,0) circle (2pt) ;
		\path (90:1.5cm) coordinate (P0);
		\path (180:1.5cm) coordinate (P1);
		\path (230:1.5cm) coordinate (P2);
		\path (310:1.5cm) coordinate (P3);
		\path (30:1.5cm) coordinate (P4);
		\draw[line width=0.5mm] (origin) -- (P0) (origin) -- (P1) (origin) -- (P2)(origin) -- (P3) (origin) -- (P4);
		\fill[white] (0,0) circle (1.5pt) ;
		\node at (-1,-.3){$\ddots$};
		\node at (1.2,0.9){$+$};
		\node at (-1.4,0.2){$+$};
		\node at (-1,-.9){$+$};
		\node at (1.2,0.9){$+$};
		\node at (-.2,1.4){$+$};
		\node at (.6,-1.1){$+$};
		\node at (-.3,.2){\tiny{$-1$}};
		\node at (-1.5,1.5){(a2)};
		\node at (0,-1.8){$Q_3$};
	\end{tikzpicture}
	\begin{tikzpicture}[scale=1]
		\draw (0,0) circle (2pt) ;
		\path (90:.8cm) coordinate (Q0);
		\path (90:1.1cm) coordinate (Q1);
		\path (30:.8cm) coordinate (Q4);
		\path (30:1.1cm) coordinate (Q5);
		\node at (0,1.7){\tiny{$[0:1]$}};
		\node at (1.7,0.8){\tiny{$[1:0]$}};
		\draw[line width=0.3mm] (Q0) -- +(182:3pt)  (Q0)-- +(2:3pt); ;
		\draw[line width=0.3mm] (Q1) -- +(182:3pt)  (Q1)-- +(2:3pt); ;
		\draw[line width=0.5mm] (origin) -- (P0) (origin) -- (P1) (origin) -- (P2)(origin) -- (P3) (origin) -- (P4);
		\fill[white] (0,0) circle (1.5pt) ;
		\node at (-1,-.3){$\ddots$};
		\node at (.6,1.2){$\ddots$};
		\node at (-.3,.8){$0$};
		\node at (-.2,1.4){$-$};
		\node at (-.7,1.1){\tiny{1/(2$n_1$+1)}};
		\draw[line width=0.3mm] (Q4) -- +(122:3pt)  (Q4)-- +(302:3pt); ; 
		\draw[line width=0.3mm] (Q5) -- +(122:3pt)  (Q5)-- +(302:3pt); ;
		\node at (1.2,0.9){$-$};
		\node at (1.6,0.3){\tiny{1/(2$n_m$+1)}};	
		\node at (.8,.1){$0$};
		\node at (-1.4,0.2){$+$};
		\node at (-1,-.9){$+$};
		\node at (.6,-1.1){$+$};
		\node at (-.3,.2){\tiny{$-1$}};
		\node at (-1.5,1.5){(b2)};
		\node at (0,-1.5){$n_i \geq 1, m \geq 1$};
        \node at (0,-1.8){nonprojective};
	\end{tikzpicture}
	\begin{tikzpicture}[scale=1]
		\draw (0,0) circle (2pt) ;
		\node at (0,1.7){\tiny{$[0:1]$}};
		\node at (1.8,0.8){};
		\path (90:.5cm) coordinate (Q3);
		\draw[line width=0.3mm] (Q0) -- +(182:3pt)  (Q0)-- +(2:3pt); ;
		\draw[line width=0.3mm] (Q3) -- +(182:3pt)  (Q3)-- +(2:3pt); ;
		\draw[line width=0.3mm] (Q1) -- +(182:3pt)  (Q1)-- +(2:3pt); ;
		\draw[line width=0.5mm] (origin) -- (P0) (origin) -- (P1) (origin) -- (P2)(origin) -- (P3) (origin) -- (P4);
		\fill[white] (0,0) circle (1.5pt) ;
		\node at (-.7,1.1){\tiny{1/(2m+1)}};
		\node at (-1,-.3){$\ddots$};
		\node at (-.3,.8){$0$};
		\node at (-.2,1.4){$-$};
		\node at (-.7,0.5){\tiny{-1/(2n+1)}};
		\node at (-1.5,1.5){(c2)};
		\node at (0,-1.5){$n \geq 1, m\geq 0$};
     \node at (0,-1.8){$X_2(n+1,n+m+1)$};
	\end{tikzpicture}\\		
	\begin{tikzpicture}[scale=1]
		\draw (0,0) circle (2pt) ;
		\node at (0,1.7){\tiny{$[0:1]$}};
		\node at (1.7,0.8){\tiny{$[1:0]$}};
		\draw[line width=0.3mm] (Q1) -- +(182:3pt)  (Q1)-- +(2:3pt); ;
		\draw[line width=0.5mm] (origin) -- (P0) (origin) -- (P1) (origin) -- (P2)(origin) -- (P3) (origin) -- (P4);
		\fill[white] (0,0) circle (1.5pt) ;
		\node at (-1,-.3){$\ddots$};
		\node at (-.4,1.1){\tiny{$1/3$}};
		\node at (-.2,1.4){$-$};
		\draw[line width=0.3mm] (P4) -- +(122:3pt)  (P4)-- +(302:3pt); 
		\node at (1.4,.5){$1$};
		\node at (-1.5,1.5){(d2)};
		\node at (0,-1.8){$Z(3)$};
	\end{tikzpicture}
	\begin{tikzpicture}[scale=1]
		\draw (0,0) circle (2pt) ;
		\node at (0,1.7){\tiny{$[0:1]$}};
		\node at (1.7,0.8){\tiny{$[1:0]$}};
		\draw[line width=0.3mm] (P0) -- +(182:3pt)  (P0)-- +(2:3pt); ;
		\draw[line width=0.5mm] (origin) -- (P0) (origin) -- (P1) (origin) -- (P2)(origin) -- (P3) (origin) -- (P4);
		\fill[white] (0,0) circle (1.5pt) ;
		\node at (-1,-.3){$\ddots$};
		\node at (-.3,1.4){$1$};
		\draw[line width=0.3mm] (Q5) -- +(122:3pt)  (Q5)-- +(302:3pt);
		\draw[line width=0.3mm] (P4) -- +(122:3pt)  (P4)-- +(302:3pt); 
		\node at (1.2,1){$1$};
		\node at (1.2,0.2){\tiny{$(n-2)/n$}};
		\node at (-1.5,1.5){(e2)};
		\node at (0,-1.5){$n=2$ or $n\geq 4$};
    \node at (0,-1.8){$Y(-n+2,-n)$};
	\end{tikzpicture}
	\begin{tikzpicture}[scale=1]
		\draw (0,0) circle (2pt) ;
		\node at (0,1.7){\tiny{$[0:1]$}};
		\node at (1.7,0.8){\tiny{$[1:0]$}};
		\draw[line width=0.3mm] (P0) -- +(182:3pt)  (P0)-- +(2:3pt); ;
		\draw[line width=0.5mm] (origin) -- (P0) (origin) -- (P1) (origin) -- (P2)(origin) -- (P3) (origin) -- (P4);
		\fill[white] (0,0) circle (1.5pt) ;
		\node at (-1,-.3){$\ddots$};
		\node at (-.3,1.4){$1$};
		\draw[line width=0.3mm] (Q4) -- +(122:3pt)  (Q4)-- +(302:3pt); 
		\draw[line width=0.3mm] (Q5) -- +(122:3pt)  (Q5)-- +(302:3pt); ;
		\node at (1.2,0.9){$-$};
		\node at (1.5,0.3){\tiny{1/(2n+1)}};	
		\node at (.8,.1){$0$};
		\node at (-1.5,1.5){(f2)};
		\node at (0,-1.8){$X_2(0,n)$, $n \geq 2$};
	\end{tikzpicture}\\
	\begin{tikzpicture}[scale=1]
		\draw (0,0) circle (2pt) ;
		\node at (0,1.7){\tiny{$[0:1]$}};
		\node at (1.7,0.8){\tiny{$[1:0]$}};
		\node at (-1,-.3){$\ddots$};
		\draw[line width=0.5mm] (origin) -- (P0) (origin) -- (P1) (origin) -- (P2)(origin) -- (P3) (origin) -- (P4);
		\fill[white] (0,0) circle (1.5pt) ;
		\draw[line width=0.3mm] (P0) -- +(2:3pt)  (P0)-- +(182:3pt); ;
		\draw[line width=0.3mm] (P4) -- +(122:3pt)  (P4)-- +(302:3pt);
		\draw[line width=0.3mm] (P3) -- +(222:3pt)  (P3)-- +(42:3pt); ;
		\node at (1.4,.5){$1$};;
		\node at (-.3,1.4){$1$};
		\node at (1.2,-1){$1$};
		\node at (-1.5,1.5){(g2)};
		\node at (1.2,-1.4){\tiny{$[1:1]$}};
    \node at (0,-1.5){$Y_0(2)$};
	\end{tikzpicture}
	\caption{List of diagrams of the minimal smooth completions of $\PGL_2$.}
\end{figure}

\newpage 

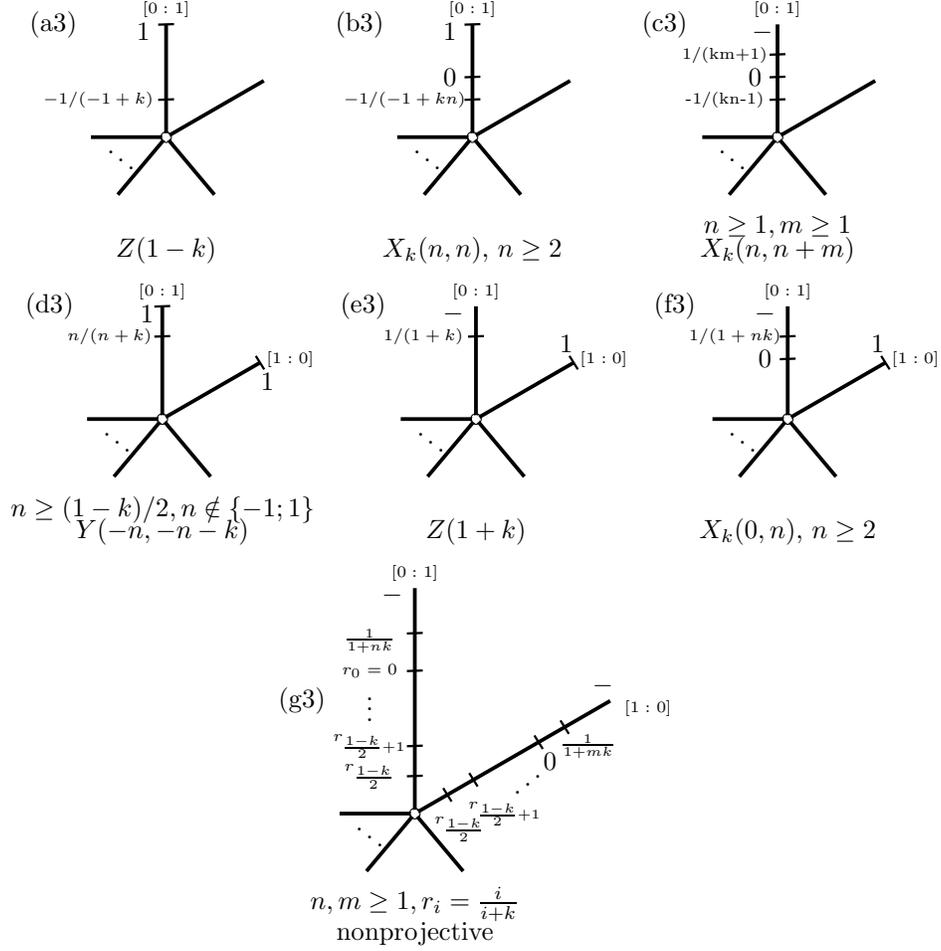
\begin{figure}[ht] \label{figure: odd}
	\begin{tikzpicture}[scale=1]
	\draw (0,0) circle (2pt) ;
	\node at (0,1.7){\tiny{$[0:1]$}};
	\node at (1.9,0.8){};
	\path (90:1.5cm) coordinate (P0);
	\path (30:1.5cm) coordinate (P4);
	\path (230:1cm) coordinate (P2);
	\path (310:1cm) coordinate (P3);
	\path (180:1cm) coordinate (P1);
	\draw[line width=0.3mm] (Q3) -- +(182:3pt)  (Q3)-- +(2:3pt); ;
	\draw[line width=0.3mm] (P0) -- +(182:3pt)  (P0)-- +(2:3pt); ;
	\draw[line width=0.5mm] (origin) -- (P0) (origin) -- (P1) (origin) -- (P2)(origin) -- (P3) (origin) -- (P4);
	\fill[white] (0,0) circle (1.5pt) ;
	\node at (-.6,-.2){$\ddots$};
	\node at (-.9,.5){\tiny{$-1/(-1+k)$}};
	\node at (-.3,1.4){$1$};
	\node at (-1.5,1.5){(a3)};
	\node at (0,-1.5){$Z(1-k)$};
	\end{tikzpicture}
	\begin{tikzpicture}[scale=1]
	\draw (0,0) circle (2pt) ;
	\node at (0,1.7){\tiny{$[0:1]$}};
	\node at (1.9,0.8){};
	\draw[line width=0.3mm] (Q0) -- +(182:3pt)  (Q0)-- +(2:3pt); ;
	\draw[line width=0.3mm] (P0) -- +(182:3pt)  (P0)-- +(2:3pt); ;
	\draw[line width=0.3mm] (Q3) -- +(182:3pt)  (Q3)-- +(2:3pt); ;
	\draw[line width=0.5mm] (origin) -- (P0) (origin) -- (P1) (origin) -- (P2)(origin) -- (P3) (origin) -- (P4);
	\fill[white] (0,0) circle (1.5pt) ;
	\node at (-.6,-.2){$\ddots$};
	\node at (-.3,.8){0};
	\node at (-.9,.5){\tiny{$-1/(-1+kn)$}};
	\node at (-.3,1.4){$1$};
	\node at (-1.5,1.5){(b3)};
	\node at (0,-1.5){$X_k(n,n)$, $n \geq 2 $};
	\end{tikzpicture}
	\begin{tikzpicture}[scale=1]
		\draw (0,0) circle (2pt) ;
		\node at (0,1.7){\tiny{$[0:1]$}};
		\node at (1.9,0.8){};
		\node at (1.8,0.8){};
		\path (90:.5cm) coordinate (Q3);
		\draw[line width=0.3mm] (Q0) -- +(182:3pt)  (Q0)-- +(2:3pt); ;
		\draw[line width=0.3mm] (Q3) -- +(182:3pt)  (Q3)-- +(2:3pt); ;
		\draw[line width=0.3mm] (Q1) -- +(182:3pt)  (Q1)-- +(2:3pt); ;
		\draw[line width=0.5mm] (origin) -- (P0) (origin) -- (P1) (origin) -- (P2)(origin) -- (P3) (origin) -- (P4);
		\fill[white] (0,0) circle (1.5pt) ;
		\node at (-.7,1.1){\tiny{1/(km+1)}};
		\node at (-.6,-.2){$\ddots$};
		\node at (-.3,.8){$0$};
		\node at (-.2,1.4){$-$};
		\node at (-.7,0.5){\tiny{-1/(kn-1)}};
		\node at (-1.5,1.5){(c3)};
		\node at (0,-1.2){$n \geq 1, m\geq 1$};
    \node at (0,-1.5){$X_k(n,n+m)$};
	\end{tikzpicture}\\		
	\begin{tikzpicture}[scale=1]
		\draw (0,0) circle (2pt) ;
		\node at (0,1.7){\tiny{$[0:1]$}};
		\node at (1.7,0.8){\tiny{$[1:0]$}};
		\draw[line width=0.3mm] (P0) -- +(182:3pt)  (P0)-- +(2:3pt); ;
		\draw[line width=0.3mm] (Q1) -- +(182:3pt)  (Q1)-- +(2:3pt); ;
		\draw[line width=0.5mm] (origin) -- (P0) (origin) -- (P1) (origin) -- (P2)(origin) -- (P3) (origin) -- (P4);
		\fill[white] (0,0) circle (1.5pt) ;
		\node at (-.6,-.2){$\ddots$};
		\node at (-.7,1.1){\tiny{$n/(n+k)$}};
		\node at (-.2,1.4){$1$};
		\draw[line width=0.3mm] (P4) -- +(122:3pt)  (P4)-- +(302:3pt); 
		\node at (1.4,.5){$1$};
		\node at (-1.5,1.5){(d3)};
		\node at (0,-1.5){$Y(-n,-n-k)$};
		\node at (0,-1.2){$n \geq (1-k)/2, n \notin \{-1;1\}$};
	\end{tikzpicture}
	\begin{tikzpicture}[scale=1]
		\draw (0,0) circle (2pt) ;
		\node at (0,1.7){\tiny{$[0:1]$}};
		\node at (1.7,0.8){\tiny{$[1:0]$}};
		\node at (-.7,1.1){\tiny{$1/(1+k)$}};
		\draw[line width=0.3mm] (Q1) -- +(182:3pt)  (Q1)-- +(2:3pt); ;
		\draw[line width=0.5mm] (origin) -- (P0) (origin) -- (P1) (origin) -- (P2)(origin) -- (P3) (origin) -- (P4);
		\fill[white] (0,0) circle (1.5pt) ;
		\node at (-.6,-.2){$\ddots$};
		\node at (-.3,1.4){$-$};
		\draw[line width=0.3mm] (P4) -- +(122:3pt)  (P4)-- +(302:3pt); 
		\node at (1.2,1){$1$};
		\node at (-1.5,1.5){(e3)};
		\node at (0,-1.5){$Z(1+k)$};
	\end{tikzpicture}
	\begin{tikzpicture}[scale=1]
	\draw (0,0) circle (2pt) ;
	\node at (0,1.7){\tiny{$[0:1]$}};
	\node at (1.7,0.8){\tiny{$[1:0]$}};
	\node at (-.7,1.1){\tiny{$1/(1+nk)$}};
	\draw[line width=0.3mm] (Q1) -- +(182:3pt)  (Q1)-- +(2:3pt); ;
	\draw[line width=0.3mm] (Q0) -- +(182:3pt)  (Q0)-- +(2:3pt); ;
	\draw[line width=0.5mm] (origin) -- (P0) (origin) -- (P1) (origin) -- (P2)(origin) -- (P3) (origin) -- (P4);
	\fill[white] (0,0) circle (1.5pt) ;
	\node at (-.6,-.2){$\ddots$};
	\node at (-.3,1.4){$-$};
	\draw[line width=0.3mm] (P4) -- +(122:3pt)  (P4)-- +(302:3pt); 
	\node at (1.2,1){$1$};
	\node at (-.3,.8){$0$};
	\node at (-1.5,1.5){(f3)};
	\node at (0,-1.5){$X_k(0,n)$, $n \geq 2$};
	\end{tikzpicture}
	\begin{tikzpicture}[scale=1]
		\draw (0,0) circle (2pt) ;
		\path (90:3cm) coordinate (P0);
		\path (30:3cm) coordinate (P4);
		\path (30:.5cm) coordinate (Q4);
		\path (30:.9cm) coordinate (Q3);
		\path (30:1.9cm) coordinate (Q2);
		\path (30:2.3cm) coordinate (Q1);
		\path (90:.5cm) coordinate (R4);
		\path (90:.9cm) coordinate (R3);
		\path (90:1.9cm) coordinate (R2);
		\path (90:2.4cm) coordinate (R1);
		\node at (0,3.2){\tiny{$[0:1]$}};
		\node at (-.6,-.2){$\ddots$};
		\node at (3.1,1.4){\tiny{$[1:0]$}};
		\draw[line width=0.5mm] (origin) -- (P0) (origin) -- (P1) (origin) -- (P2)(origin) -- (P3) (origin) -- (P4);
		\fill[white] (0,0) circle (1.5pt) ;
		\draw[line width=0.3mm] (Q4) -- +(122:3pt)  (Q4)-- +(302:3pt); 
		\draw[line width=0.3mm] (Q3) -- +(122:3pt)  (Q3)-- +(302:3pt); 
		\draw[line width=0.3mm] (Q2) -- +(122:3pt)  (Q2)-- +(302:3pt); 
		\draw[line width=0.3mm] (Q1) -- +(122:3pt)  (Q1)-- +(302:3pt);	
		\draw[line width=0.3mm] (R1) -- +(2:3pt)  (R1)-- +(182:3pt);
		\draw[line width=0.3mm] (R2) -- +(2:3pt)  (R2)-- +(182:3pt);
		\draw[line width=0.3mm] (R3) -- +(2:3pt)  (R3)-- +(182:3pt);
		\draw[line width=0.3mm] (R4) -- +(2:3pt)  (R4)-- +(182:3pt);
		\node at (2.5,1.7){$-$};;
		\node at (-.3,2.9){$-$};;
		\node at (.6,-.2){\tiny{$r_{\frac{1-k}{2}}$}};
		\node at (1.2,0){\tiny{$r_{\frac{1-k}{2}+1}$}};
		\node at (1.5,.5){$\reflectbox{$\ddots$}$};
		\node at (1.8,.7){0};
		\node at (2.3,.9){\tiny{$\frac{1}{1+mk}$}};
		\node at (-.6,.5){\tiny{$r_{\frac{1-k}{2}}$}};
		\node at (-.6,.9){\tiny{$r_{\frac{1-k}{2}+1}$}};
		\node at (-.6,1.5){$\vdots$};
		\node at (-.6,1.9){\tiny{$r_0 = 0$}};
		\node at (-.6,2.3){\tiny{$\frac{1}{1+nk}$}};
		\node at (-1.5,1.5){(g3)};
		\node at (0,-1.2){$n,m \geq 1, r_i = \frac{i}{i+k}$};
    \node at (0,-1.6){nonprojective};
	\end{tikzpicture}
	\caption{List of diagrams of the minimal smooth completions of $\SL_2/A_k$, where $k \geq 3$ and $k$ is odd.}
\end{figure}

\newpage 

\begin{figure}[ht]
	\begin{tikzpicture}[scale=1]
		\draw (0,0) circle (2pt) ;
		\path (90:.8cm) coordinate (Q0);
		\path (90:1.1cm) coordinate (Q1);
		\path (90:.5cm) coordinate (Q3);
		\path (30:.8cm) coordinate (Q4);
		\path (30:1.1cm) coordinate (Q5);
		\node at (0,1.7){\tiny{$[0:1]$}};
		\node at (1.9,0.8){};
		\path (90:1.5cm) coordinate (P0);
		\path (30:1.5cm) coordinate (P4);
		\path (230:1cm) coordinate (P2);
		\path (310:1cm) coordinate (P3);
		\path (180:1cm) coordinate (P1);
		\draw[line width=0.3mm] (Q3) -- +(182:3pt)  (Q3)-- +(2:3pt); ;
		\draw[line width=0.3mm] (P0) -- +(182:3pt)  (P0)-- +(2:3pt); ;
		\draw[line width=0.5mm] (origin) -- (P0) (origin) -- (P1) (origin) -- (P2)(origin) -- (P3) (origin) -- (P4);
		\fill[white] (0,0) circle (1.5pt) ;
		\node at (-.6,-.2){$\ddots$};
		\node at (-.9,.5){\tiny{$-1/(-1+k)$}};
		\node at (-.3,1.4){$1$};
		\node at (-1.5,1.5){(a4)};
		\node at (0,-1.5){$Z(1-k)$};
	\end{tikzpicture}
	\begin{tikzpicture}[scale=1]
		\draw (0,0) circle (2pt) ;
		\node at (0,1.7){\tiny{$[0:1]$}};
		\node at (1.9,0.8){};
		\draw[line width=0.3mm] (Q0) -- +(182:3pt)  (Q0)-- +(2:3pt); ;
		\draw[line width=0.3mm] (P0) -- +(182:3pt)  (P0)-- +(2:3pt); ;
		\draw[line width=0.3mm] (Q3) -- +(182:3pt)  (Q3)-- +(2:3pt); ;
		\draw[line width=0.5mm] (origin) -- (P0) (origin) -- (P1) (origin) -- (P2)(origin) -- (P3) (origin) -- (P4);
		\fill[white] (0,0) circle (1.5pt) ;
		\node at (-.6,-.2){$\ddots$};
		\node at (-.3,.8){0};
		\node at (-.9,.5){\tiny{$-1/(-1+kn)$}};
		\node at (-.3,1.4){$1$};
		\node at (-1.5,1.5){(b4)};
		\node at (0,-1.5){$X_k(n,n)$, $n \geq 2 $};
	\end{tikzpicture}
	\begin{tikzpicture}[scale=1]
		\draw (0,0) circle (2pt) ;
		\node at (0,1.7){\tiny{$[0:1]$}};
		\node at (1.9,0.8){};
		\node at (1.8,0.8){};
		\path (90:.5cm) coordinate (Q3);
		\draw[line width=0.3mm] (Q0) -- +(182:3pt)  (Q0)-- +(2:3pt); ;
		\draw[line width=0.3mm] (Q3) -- +(182:3pt)  (Q3)-- +(2:3pt); ;
		\draw[line width=0.3mm] (Q1) -- +(182:3pt)  (Q1)-- +(2:3pt); ;
		\draw[line width=0.5mm] (origin) -- (P0) (origin) -- (P1) (origin) -- (P2)(origin) -- (P3) (origin) -- (P4);
		\fill[white] (0,0) circle (1.5pt) ;
		\node at (-.7,1.1){\tiny{1/(km+1)}};
		\node at (-.6,-.2){$\ddots$};
		\node at (-.3,.8){$0$};
		\node at (-.2,1.4){$-$};
		\node at (-.7,0.5){\tiny{-1/(kn-1)}};
		\node at (-1.5,1.5){(c4)};
		\node at (0,-1.2){$n \geq 1, m\geq 1$};
    \node at (0,-1.5){$X_k(n,n+m)$};
	\end{tikzpicture}\\		
	\begin{tikzpicture}[scale=1]
		\draw (0,0) circle (2pt) ;
		\node at (0,1.7){\tiny{$[0:1]$}};
		\node at (1.7,0.8){\tiny{$[1:0]$}};
		\draw[line width=0.3mm] (P0) -- +(182:3pt)  (P0)-- +(2:3pt); ;
		\draw[line width=0.3mm] (Q1) -- +(182:3pt)  (Q1)-- +(2:3pt); ;
		\draw[line width=0.5mm] (origin) -- (P0) (origin) -- (P1) (origin) -- (P2)(origin) -- (P3) (origin) -- (P4);
		\fill[white] (0,0) circle (1.5pt) ;
		\node at (-.6,-.2){$\ddots$};
		\node at (-.7,1.1){\tiny{$n/(n+k)$}};
		\node at (-.2,1.4){$1$};
		\draw[line width=0.3mm] (P4) -- +(122:3pt)  (P4)-- +(302:3pt); 
		\node at (1.4,.5){$1$};
		\node at (-1.5,1.5){(d4)};
		\node at (0,-1.5){$Y(-n,-n-k)$};
		\node at (0,-1.2){$n \geq (1-k)/2, n \notin \{-1;1\}$};
	\end{tikzpicture}
	\begin{tikzpicture}[scale=1]
		\draw (0,0) circle (2pt) ;
		\node at (0,1.7){\tiny{$[0:1]$}};
		\node at (1.7,0.8){\tiny{$[1:0]$}};
		\node at (-.7,1.1){\tiny{$1/(1+k)$}};
		\draw[line width=0.3mm] (Q1) -- +(182:3pt)  (Q1)-- +(2:3pt); ;
		\draw[line width=0.5mm] (origin) -- (P0) (origin) -- (P1) (origin) -- (P2)(origin) -- (P3) (origin) -- (P4);
		\fill[white] (0,0) circle (1.5pt) ;
		\node at (-.6,-.2){$\ddots$};
		\node at (-.3,1.4){$-$};
		\draw[line width=0.3mm] (P4) -- +(122:3pt)  (P4)-- +(302:3pt); 
		\node at (1.2,1){$1$};
		\node at (-1.5,1.5){(e4)};
		\node at (0,-1.5){$Z(1+k)$};
	\end{tikzpicture}
	\begin{tikzpicture}[scale=1]
		\draw (0,0) circle (2pt) ;
		\node at (0,1.7){\tiny{$[0:1]$}};
		\node at (1.7,0.8){\tiny{$[1:0]$}};
		\node at (-.7,1.1){\tiny{$1/(1+nk)$}};
		\draw[line width=0.3mm] (Q1) -- +(182:3pt)  (Q1)-- +(2:3pt); ;
		\draw[line width=0.3mm] (Q0) -- +(182:3pt)  (Q0)-- +(2:3pt); ;
		\draw[line width=0.5mm] (origin) -- (P0) (origin) -- (P1) (origin) -- (P2)(origin) -- (P3) (origin) -- (P4);
		\fill[white] (0,0) circle (1.5pt) ;
		\node at (-.6,-.2){$\ddots$};
		\node at (-.3,1.4){$-$};
		\draw[line width=0.3mm] (P4) -- +(122:3pt)  (P4)-- +(302:3pt); 
		\node at (1.2,1){$1$};
		\node at (-.3,.8){$0$};
		\node at (-1.5,1.5){(f4)};
		\node at (0,-1.5){$X_k(0,n)$, $n \geq 2$};
	\end{tikzpicture}
	\begin{tikzpicture}[scale=1]
	\draw (0,0) circle (2pt) ;
	\node at (0,1.7){\tiny{$[0:1]$}};
	\node at (1.7,0.8){\tiny{$[1:0]$}};
	\node at (1.4,-0.8){\tiny{$\omega = [1 : \alpha]$}};
	\draw[line width=0.3mm] (P3) -- +(222:3pt)  (P3)-- +(42:3pt); 
	\draw[line width=0.3mm] (P0) -- +(182:3pt)  (P0)-- +(2:3pt); 
	\draw[line width=0.3mm] (P4) -- +(122:3pt)  (P4)-- +(302:3pt); 
	\draw[line width=0.5mm] (origin) -- (P0) (origin) -- (P1) (origin) -- (P2)(origin) -- (P3) (origin) -- (P4);
	\fill[white] (0,0) circle (1.5pt) ;
	\node at (-.6,-.2){$\ddots$};
	\node at (-.3,1.4){$1$};
	\node at (.4,-0.95){\tiny{$\frac{4}{k}-1$}};
	\node at (1.2,1){$1$};
	\node at (-1.5,1.5){(g4)};
	\node at (0,-1.3){When $k \geq 6$};
    \node at (0,-1.65){$Y_0(k)$};
\end{tikzpicture}
	\begin{tikzpicture}[scale=1]
	\draw (0,0) circle (2pt) ;
	\node at (0,1.7){\tiny{$[0:1]$}};
	\node at (1.7,0.8){\tiny{$[1:0]$}};
	\node at (1.4,-0.8){\tiny{$\omega = [1 : \alpha]$}};
	\draw[line width=0.3mm] (P3) -- +(222:3pt)  (P3)-- +(42:3pt); ;
	\draw[line width=0.3mm] (P0) -- +(182:3pt)  (P0)-- +(2:3pt); ;
	\draw[line width=0.5mm] (origin) -- (P0) (origin) -- (P1) (origin) -- (P2)(origin) -- (P3) (origin) -- (P4);
	\fill[white] (0,0) circle (1.5pt) ;
	\node at (-.6,-.2){$\ddots$};
	\node at (-.3,1.4){$1$};
	\node at (.4,-0.95){\tiny{$\frac{4}{k}-1$}}; 
	\node at (-1.5,1.5){(g4')};
	\node at (0,-1.3){only for $k=4$};
    \node at (0,-1.65){$\p(R_2) \times \p(R_1)$};
\end{tikzpicture}
\begin{tikzpicture}[scale=1]
	\draw (0,0) circle (2pt) ;
	\node at (0,1.7){\tiny{$[0:1]$}};
	\node at (1.7,0.8){\tiny{$[1:0]$}};
	\draw[line width=0.3mm] (P0) -- +(182:3pt)  (P0)-- +(2:3pt); ;
	\draw[line width=0.5mm] (origin) -- (P0) (origin) -- (P1) (origin) -- (P2)(origin) -- (P3) (origin) -- (P4);
	\fill[white] (0,0) circle (1.5pt) ;
	\node at (-.6,-.2){$\ddots$};
	\draw[line width=0.3mm] (P4) -- +(122:3pt)  (P4)-- +(302:3pt); 
	\node at (1.2,1){$1$};
	\node at (-.3,1.4){$1$};
	\node at (.5,-0.9){$+$};
	\node at (-1,-.2){$+$};
	\node at (-.4,-.8){$+$};
	\node at (-.3,.2){\tiny{$-1$}};
	\node at (-1.5,1.5){(h4)};
	\node at (0,-1.65){$Y(\frac{k}{2},-\frac{k}{2})$};
\end{tikzpicture}
	\begin{tikzpicture}[scale=1]
		\draw (0,0) circle (2pt) ;
		\path (90:3cm) coordinate (P0);
		\path (30:3cm) coordinate (P4);
		\path (30:.5cm) coordinate (Q4);
		\path (30:.9cm) coordinate (Q3);
		\path (30:1.9cm) coordinate (Q2);
		\path (30:2.3cm) coordinate (Q1);
		\path (90:.5cm) coordinate (R4);
		\path (90:.9cm) coordinate (R3);
		\path (90:1.9cm) coordinate (R2);
		\path (90:2.4cm) coordinate (R1);
		\node at (0,3.2){\tiny{$[0:1]$}};
		\node at (-.6,-.2){$\ddots$};
		\node at (3.1,1.4){\tiny{$[1:0]$}};
		\draw[line width=0.5mm] (origin) -- (P0) (origin) -- (P1) (origin) -- (P2)(origin) -- (P3) (origin) -- (P4);
		\fill[white] (0,0) circle (1.5pt) ;
		\draw[line width=0.3mm] (Q4) -- +(122:3pt)  (Q4)-- +(302:3pt); 
		\draw[line width=0.3mm] (Q3) -- +(122:3pt)  (Q3)-- +(302:3pt); 
		\draw[line width=0.3mm] (Q2) -- +(122:3pt)  (Q2)-- +(302:3pt); 
		\draw[line width=0.3mm] (Q1) -- +(122:3pt)  (Q1)-- +(302:3pt); 
		\draw[line width=0.3mm] (R1) -- +(2:3pt)  (R1)-- +(182:3pt);
		\draw[line width=0.3mm] (R2) -- +(2:3pt)  (R2)-- +(182:3pt);
		\draw[line width=0.3mm] (R3) -- +(2:3pt)  (R3)-- +(182:3pt);
		\draw[line width=0.3mm] (R4) -- +(2:3pt)  (R4)-- +(182:3pt);
		\node at (2.5,1.7){$-$};;
		\node at (-.3,2.9){$-$};;
		\node at (.6,-.2){\tiny{$r_{1-\frac{k}{2}}$}};
		\node at (1.2,0){\tiny{$r_{2-\frac{k}{2}}$}};
		\node at (1.5,.5){$\reflectbox{$\ddots$}$};
		\node at (1.8,.7){0};
		\node at (2.3,.9){\tiny{$\frac{1}{1+mk}$}};
		\node at (-.6,.5){\tiny{$r_{1-\frac{k}{2}}$}};
		\node at (-.6,.9){\tiny{$r_{2-\frac{k}{2}}$}};
		\node at (-.6,1.5){$\vdots$};
		\node at (-.6,1.9){\tiny{$r_0 = 0$}};
		\node at (-.6,2.3){\tiny{$\frac{1}{1+nk}$}};
		\node at (-1.5,1.5){(i4)};
		\node at (0,-1.2){$n,m \geq 1, r_i = \frac{i}{i+k}$};
        \node at (0,-1.5){nonprojective};
	\end{tikzpicture}
	\caption{List of diagrams of the minimal smooth completions of $\SL_2/A_k$, where $k \geq 4$ and $k$ is even.}
\end{figure}
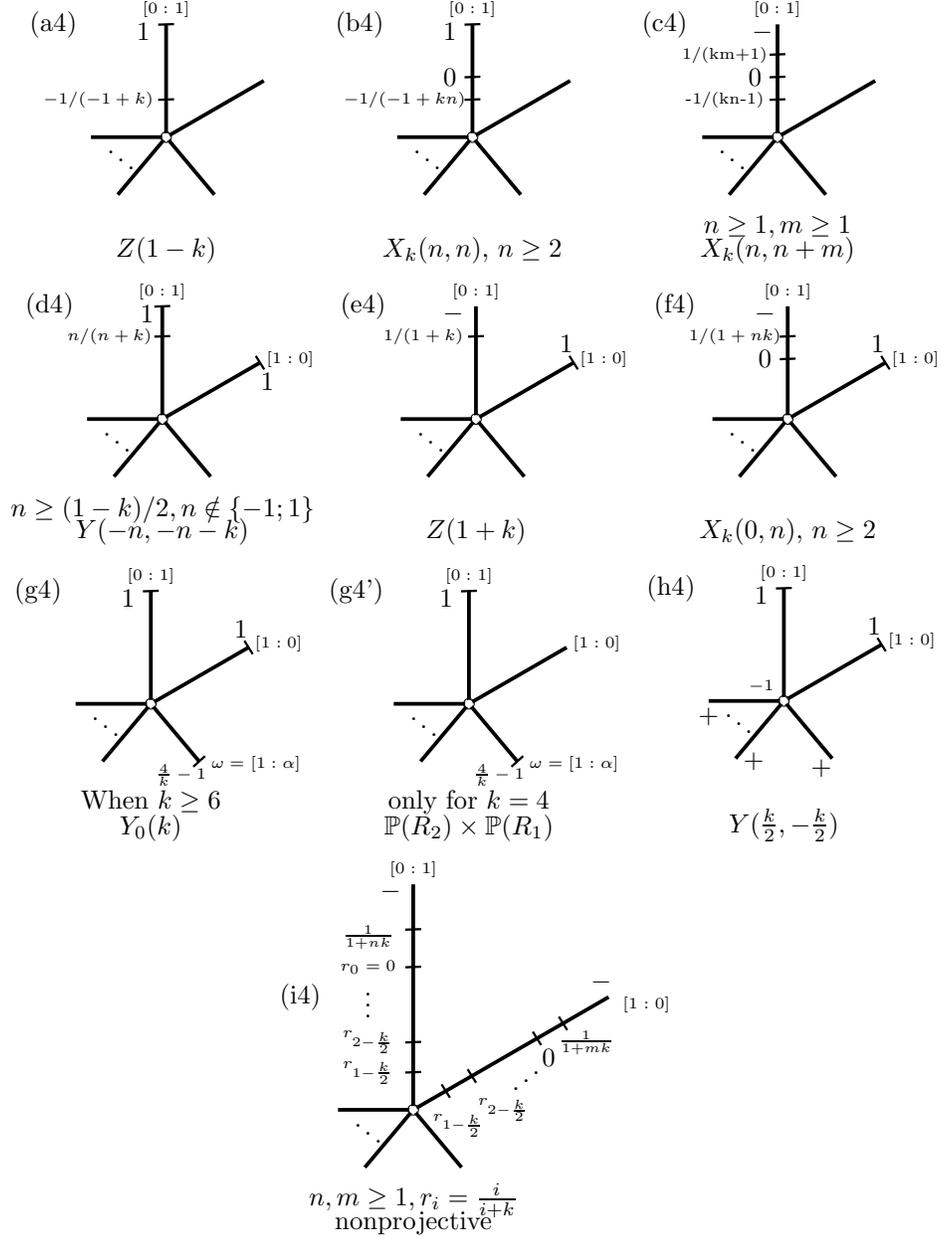

\newpage

\bibliographystyle{alpha}

\end{document}